\numberwithin{equation}{section}
\numberwithin{figure}{section}
\theoremstyle{plain}
\newtheorem{theorem}{Theorem}[section]
\theoremstyle{plain}
\newtheorem*{theorem*}{Theorem}
\theoremstyle{plain}
\newtheorem{proposition}[theorem]{Proposition}
\theoremstyle{plain}
\newtheorem{lemma}[theorem]{Lemma}
\theoremstyle{plain}
\newtheorem{corollary}[theorem]{Corollary}
\theoremstyle{definition}
\newtheorem{definition}[theorem]{Definition}
\theoremstyle{definition}
\newtheorem{notation}[theorem]{Notation}
\theoremstyle{definition}
\theoremstyle{remark}
\newtheorem{remark}[theorem]{Remark}
\newcommand{\Uqg}{U_q(\mathfrak{g})}
\newcommand{\Uqsl}{U_q(\mathfrak{sl}_N)}
\newcommand{\id}{\mathrm{id}}
\newcommand{\braidR}{\widehat{\mathsf{R}}}
\newcommand{\symm}[1]{\mathcal{S}_{#1}}
\newcommand{\rescal}{\lambda}
\newcommand{\twistedext}[1]{\Lambda_{q, #1}(H)}
\begin{document}

\title[On PBW-deformations of braided exterior algebras]{On PBW-deformations of braided exterior algebras}

\author{Marco Matassa}

\address{Vakgroep wiskunde, Vrije Universiteit Brussel (VUB), B-1050 Brussels, Belgium}

\email{marco.matassa@gmail.com, marco.matassa@vub.ac.be}

\thanks{(Partially) supported by the FWO grant G.0251.15N}

\begin{abstract}
We classify PBW-deformations of quadratic-constant type of certain quantizations of exterior algebras.
These correspond to the fundamental modules of quantum $\mathfrak{sl}_N$, their duals, and their direct sums.
We show that the first two cases do not admit any deformation, while in the third case we obtain an essentially unique algebra with good properties. We compare this algebra with other quantum Clifford algebras appearing in the literature.
\end{abstract}

\maketitle

\section*{Introduction}

The aim of this paper is to initiate a study of PBW-deformations of certain quantizations of exterior algebras.
Recall that a \emph{PBW-deformation} of a quadratic algebra is a filtered algebra such that its associated graded algebra coincides with the original quadratic one, see for instance \cite{pbw-deformation, quadratic-algebras}.
The name comes from the main example of the universal enveloping algebra $U(\mathfrak{g})$ of a Lie algebra $\mathfrak{g}$, whose associated graded algebra is the symmetric algebra $S(\mathfrak{g})$, as a consequence of the Poincaré–Birkhoff–Witt theorem. Other classical examples include Weyl algebras and Clifford algebras, which can be seen as PBW-deformations of symmetric algebras and exterior algebras, respectively.
Here we will focus on deformations of quadratic-constant type, which classically correspond to Clifford algebras.

The quadratic algebras which we will consider are quantizations of the exterior algebras $\Lambda(V)$, where $V$ is a $U(\mathfrak{g})$-module and $\mathfrak{g}$ is a complex simple Lie algebra.
It is well known that the enveloping algebras $U(\mathfrak{g})$ admit quantizations $\Uqg$ as Hopf algebras, called quantized enveloping algebras or Drinfeld-Jimbo algebras. As long as the parameter $q$ is not a root of unity, the representation theory of $\Uqg$ essentially parallels that of $U(\mathfrak{g})$, so that in particular we have a $\Uqg$-module corresponding to $V$. A construction that functorially associates to $V$ a quadratic $\Uqg$-module algebra $\Lambda_q(V)$ is given in \cite{bezw}, and goes under the name of \emph{braided exterior algebras}.
However in general these algebras do not have the same Hilbert series as their classical counterparts. When this happens they are called \emph{flat}, and the flat simple modules have been classified in \cite{zwi}.
In the case of semisimple modules we can also consider the possibility of taking appropriate twisted tensor products.

One motivation for studying deformations of braided exterior algebras comes from the non-commutative geometry program of Connes \cite{con-book}. In this theory the main objects of study are spectral triples, whose main ingredients are Dirac-type operators, which are classically defined using Clifford algebras.
Hence developing appropriate quantum notions would lead to an interesting interaction between this theory and that of quantum groups.
Dirac operators for a certain class of quantum homogeneous spaces, namely quantized irreducible flag manifolds, have been defined long ago in \cite{qflag}.
This definition only uses some rather general assumptions on what a quantum Clifford algebra should be.
However, a more concrete model is needed for a detailed analysis of these operators, for instance to determine their spectra.
This analysis was performed for the case of quantum projective spaces in \cite{dd-proj} and \cite{mat-proj}, where it was shown that these Dirac operators give rise to spectral triples.
The general strategy to prove such a result is to obtain some quantum version of the Parthasarathy formula, see the discussion in \cite{mat-square} and \cite{mat-lagr}.
The proof of this formula in the classical case crucially uses the commutation relations in the Clifford algebra.
With such a general result still lacking in the quantum setting, we hope that a better understanding of some structural properties of quantum Clifford algebras will lead to some progress in this direction.

In this paper we will study PBW-deformations of braided exterior algebras corresponding to certain modules of $\Uqsl$. These are the fundamental module $V$ and its dual $V^*$, which are known to be flat, and their direct sum, which we denote by $H$. First we will show that the algebras $\Lambda_q(V)$ and $\Lambda_q(V^*)$ do not admit any non-trivial PBW-deformations of quadratic-constant type. The direct sum case on the other hand is far more interesting, also because we have to face the problem that $\Lambda_q(H)$ is not flat. For this reason we will replace it by an appropriate twisted tensor product of the algebras $\Lambda_q(V)$ and $\Lambda_q(V^*)$, which gives a quadratic algebra with the same Hilbert series as $\Lambda(H)$.
This is defined in terms of the braiding in the category of $\Uqsl$-modules, up to an important rescaling.
The main result is that in this case we find a one-parameter family of PBW-deformations, which we denote by $\mathrm{Cl}_q(c)$ for $c \in \mathbb{K}^\times$. They all turn out to be isomorphic for different values of $c$, as well as $\Uqsl$-module algebras. They admit a presentation which is very close to the classical case, namely
\[
\mathrm{Cl}_q(c) \cong T(H) / \langle x \otimes y + \sigma(x \otimes y) - (x, y)_c : x, y \in H \rangle.
\]
Here $\sigma : H \otimes H \to H \otimes H$ satisfies the braid equation, while the bilinear form $(\cdot, \cdot)_c : H \otimes H \to \mathbb{K}$ is $\Uqsl$-invariant and satisfies the symmetry property $(\cdot, \cdot)_c \circ \sigma = (\cdot, \cdot)_c$.

The paper is organized as follows. In \cref{sec:notation} we fix our conventions for quantized enveloping algebras.
In \cref{sec:pbw-deformations} we recall the notions of PBW-deformations and twisted tensor products.
In \cref{sec:braidings} we recall the notion of braided exterior algebra, as well as determining the braidings for the modules of interest.
In \cref{sec:braid-equation} we recall some facts about the braid equation and symmetrization.
In \cref{sec:simple-module} we show that there are no PBW-deformations for the simple modules we consider.
In \cref{sec:semisimple} we discuss our approach to the semisimple case, in terms of appropriate twisted tensor products.
Next in \cref{sec:pbw-semisimple} we classify the PBW-deformations of these algebras.
Finally in \cref{sec:properties} we show some further properties of these algebras, as well as the connection with other notions of quantum Clifford algebras.

\section{Quantized enveloping algebras}
\label{sec:notation}

In this section we fix some notation for complex simple Lie algebras and quantized enveloping algebras.
Let $\mathfrak{g}$ be a finite-dimensional complex simple Lie algebra.
Denote by $r$ the rank of $\mathfrak{g}$, by $\{ \alpha_i \}_{i = 1}^r$ the simple roots and by $\{ \omega_i \}_{i = 1}^r$ the fundamental weights. Denote by $\{ a_{i j} \}_{i, j = 1}^r$ the Cartan matrix. We will only consider the simply-laced case here.

For quantized enveloping algebras we use the conventions of \cite{jantzen}.
Let $q \in \mathbb{C}$ and suppose it is not a root of unity.
The \emph{quantized universal enveloping algebra} $U_q(\mathfrak{g})$ is generated by the elements $\{E_i$, $F_i$, $K_i$, $K_i^{-1}\}_{i = 1}^r$ satisfying the relations
\begin{gather*}
K_i K_i^{-1} = K_i^{-1} K_i=1,\ \
K_i K_j = K_j K_i, \\
K_i E_j K_i^{-1} = q^{a_{ij}} E_j,\ \
K_i F_j K_i^{-1} = q^{-a_{ij}} F_j, \\
E_i F_j - F_j E_i = \delta_{ij} \frac{K_i - K_i^{-1}}{q - q^{-1}},
\end{gather*}
plus the quantum analogue of the Serre relations.
The Hopf algebra structure is defined by
\begin{gather*}
\Delta(K_i) = K_i \otimes K_i, \quad
\Delta(E_i) = E_i \otimes 1 + K_i \otimes E_i, \quad
\Delta(F_i) = F_i \otimes K_i^{-1} + 1 \otimes F_i, \\
S(K_i) = K_i^{-1}, \quad
S(E_i) = - K_i^{-1} E_i, \quad
S(F_i) = - F_i K_i, \quad
\varepsilon(K_i) = 1, \quad
\varepsilon(E_i) = \varepsilon(F_i) = 0.
\end{gather*}
For $q \in \mathbb{R}$, the \emph{compact real form} of $U_q(\mathfrak{g})$ is defined by
\[
K_{i}^{*} = K_{i}, \quad
E_{i}^{*} = K_{i} F_{i}, \quad
F_{i}^{*} = E_{i} K_{i}^{-1}.
\]
Given $\lambda = \sum_{i = 1}^r n_i \alpha_i$ we will write $K_\lambda = K_1^{n_1} \cdots K_r^{n_r}$.
Let $\rho$ be the half-sum of the positive roots of $\mathfrak{g}$. Then we have $S^2(X) = K_{2\rho}^{-1} X K_{2\rho}$ for any $X \in \Uqg$.

The finite-dimensional irreducible (Type 1) representations of $U_{q}(\mathfrak{g})$ are labelled by their highest weights $\Lambda$ as in the classical case. We denote these modules by $V(\Lambda)$. For $q \in \mathbb{R}$ there is a Hermitian inner product $(\cdot, \cdot)$ on $V(\Lambda)$, unique up to a positive scalar factor, which is compatible with the compact real form in the sense that
\[
(a v, w) = (v, a^{*} w), \quad
v, w \in V(\Lambda), \ a \in U_{q}(\mathfrak{g}).
\]
We also need the braiding on the category of \emph{Type 1 representations}.
Given two $U_{q}(\mathfrak{g})$-modules $V$ and $W$ of this type, the \emph{braiding} is defined by $\braidR_{V, W} = \tau \circ R_{V, W}$.
Here $\tau$ denotes the flip and $R_{V, W}$ is the specialization of the universal $R$-matrix to the modules $V$ and $W$. It turns out that the braiding $\braidR_{V, W} : V \otimes W \to W \otimes V$ is uniquely determined by the relation
\[
\braidR_{V, W} (v \otimes w) = q^{(\mathrm{wt}(v), \mathrm{wt}(w))} w \otimes v + \sum_i w_i \otimes v_i,
\]
where $\mathrm{wt}(w_i) > \mathrm{wt}(w)$ and $\mathrm{wt}(v_i) < \mathrm{wt}(v)$.
Here $<$ denotes the natural partial order on the set of weights.
To determine $\braidR_{V, W}$ we can start by fixing a highest weight vector and obtain the other values using the action of $U_{q}(\mathfrak{g})$, since $\braidR_{V, W}$ is a module map.

\section{PBW-deformations and twisted tensor products}
\label{sec:pbw-deformations}

In this section we recall the notion of PBW-deformation of a quadratic algebra, following \cite{pbw-deformation}, and that of twisted tensor product of algebras, following \cite{twisted-tensor}.

\subsection{PBW-deformation}

Let $V$ be a vector space over a field $\mathbb{K}$ and denote by $T(V)$ its tensor algebra. Fix a subspace $R \subset V \otimes V$ and denote by $\langle R \rangle$ the two-sided ideal generated by $R$ inside $T(V)$. The algebra obtained by quotienting by such an ideal, which we denote by $Q(V, R) := T(V) / \langle R \rangle$, is called a (homogeneous) \emph{quadratic algebra}. More generally, write $F^2(V) = \mathbb{K} \oplus V \oplus (V \otimes V)$ and fix a subspace $P \subset F^2(V)$. Then the algebra $Q(V, P) = T(V) / \langle P \rangle$ is called a \emph{non-homogeneous quadratic algebra}.

The algebra $U = Q(V, P)$ has a natural structure of a filtered algebra. Hence we have an associated graded algebra, which we denote by $\mathrm{gr} U$.
Consider the projection $\pi : F^2(V) \to V \otimes V$ on the second homogeneous component. Set $R = \pi(P)$ and consider the homogeneous quadratic algebra $A = Q(V, R)$.
We have a natural surjective map $p : A \to \mathrm{gr} U$.

\begin{definition}[\cite{pbw-deformation}]
With the notation as above, we say that $U = Q(V, P)$ is a \emph{PBW-deformation} of $A = Q(V, R)$ if the projection $p : A \to \mathrm{gr} U$ is an isomorphism.
\end{definition}

It is easy to derive two necessary conditions for $U$ to be a PBW-deformation of $A$, which are given in \cite[Lemma 0.4]{pbw-deformation}.
The first condition implies that $P \subset F^2(V)$ can be described in terms of two linear maps $\alpha : R \to V$ and $\beta : R \to \mathbb{K}$ as
\[
P = \{ x - \alpha(x) - \beta(x) : x \in R \}.
\]
The second condition can be written in terms of various relations between $\alpha$ and $\beta$. Here we will consider only the case of \emph{quadratic-constant deformations}, namely $\alpha = 0$.

\begin{lemma}[{\cite[Lemma 3.3]{pbw-deformation}}]
Suppose $\alpha = 0$. Then we must have $\beta \otimes \id - \id \otimes \beta = 0$ on the intersection $(R \otimes V) \cap (V \otimes R) \subset V^{\otimes 3}$.
\end{lemma}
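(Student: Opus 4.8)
The plan is to exploit the fact that a PBW-deformation must, in particular, reproduce $A$ in the lowest degrees: since $R \subset V \otimes V$ imposes no relations in degrees $0$ and $1$, the canonical linear map $V \to U$ will have to be injective. The claimed identity then follows by exhibiting $(\beta \otimes \id - \id \otimes \beta)(w)$, for $w$ in the triple intersection, as a degree-one element of $T(V)$ that lies in the defining ideal $\langle P \rangle$, and hence must vanish.

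First I would record the shape of $P$: since $\alpha = 0$, the first necessary condition of \cite[Lemma 0.4]{pbw-deformation} gives $P = \{ x - \beta(x) : x \in R \}$, so in particular $r - \beta(r) \in \langle P \rangle$ for every $r \in R$. Now fix $w \in (R \otimes V) \cap (V \otimes R)$ and write it in two ways, $w = \sum_i r_i \otimes v_i = \sum_j v_j' \otimes r_j'$ with $r_i, r_j' \in R$ and $v_i, v_j' \in V$. Because $\langle P \rangle$ is a two-sided ideal, each $(r_i - \beta(r_i)) \otimes v_i$ and each $v_j' \otimes (r_j' - \beta(r_j'))$ lies in $\langle P \rangle$; summing the two families gives
\[
w - (\beta \otimes \id)(w) \in \langle P \rangle, \qquad w - (\id \otimes \beta)(w) \in \langle P \rangle ,
\]
and subtracting yields $(\beta \otimes \id - \id \otimes \beta)(w) \in \langle P \rangle$. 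Observe that both $(\beta \otimes \id)(w)$ and $(\id \otimes \beta)(w)$ lie in $V \subset T(V)$, so this is a statement about a degree-one element of the tensor algebra.

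To conclude I would show that the canonical map $V \to U = Q(V, P)$ is injective. Since $U$ is assumed to be a PBW-deformation of $A = Q(V, R)$ with $R \subset V \otimes V$, the isomorphism $p : A \to \mathrm{gr}\, U$ restricts in degree $0$ to $\mathbb{K} \cong \mathrm{gr}_0 U$ and in degree $1$ to $V \cong \mathrm{gr}_1 U = F^1(V) U / F^0(V) U$; the latter forces $\ker(V \to U) = 0$. Applying this to the degree-one element $(\beta \otimes \id - \id \otimes \beta)(w)$, which maps to $0$ in $U$ by the previous paragraph, we get that it already vanishes in $V$, which is exactly the assertion.

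The one point that deserves care is this last step: one must genuinely use the PBW hypothesis to rule out that $\langle P \rangle$ meets $V$ nontrivially — i.e. that no nonzero vector of $V$ becomes a scalar, or zero, in the quotient $U$. This is precisely what the degree-$\leq 1$ part of the isomorphism $p : A \to \mathrm{gr}\, U$ delivers, and I would spell it out rather than leave it implicit; everything else is routine manipulation inside the two-sided ideal $\langle P \rangle$.
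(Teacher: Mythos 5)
Your argument is correct. The paper itself does not prove this lemma---it quotes it from \cite[Lemma 3.3]{pbw-deformation}---but your reconstruction is essentially the standard Braverman--Gaitsgory argument: writing $w \in (R\otimes V)\cap(V\otimes R)$ both ways shows $(\beta\otimes\id-\id\otimes\beta)(w)\in\langle P\rangle\cap V$, and the degree-$\leq 1$ part of the PBW hypothesis (injectivity of $V\to U$, which you rightly flag as the one step that genuinely uses $p$ being an isomorphism) forces this element to vanish.
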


In general these conditions are necessary but not sufficient. However, for the class of Koszul algebras, these conditions turn out to be sufficient, as stated in the next theorem.

\begin{theorem}[{\cite[Theorem 4.1]{pbw-deformation}}]
Let $U = Q(V, P)$ and let $A = Q(V, R)$ be its associated quadratic algebra with $R = \pi(P)$. Suppose $\alpha$ and $\beta$ satisfy the necessary conditions. Suppose furthermore that $A$ is a Koszul algebra. Then $U$ is a PBW-deformation of $A$.
\end{theorem}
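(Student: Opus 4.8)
The plan is to reconstruct the argument of Braverman--Gaitsgory underlying \cite[Theorem 4.1]{pbw-deformation}. The first step is to linearize the problem by passing to a Rees-type algebra: introduce a central variable $\hbar$, placed in degree $1$, and set
\[
U_\hbar = \bigl( T(V) \otimes \mathbb{K}[\hbar] \bigr) \big/ \bigl\langle x - \hbar\, \alpha(x) - \hbar^2\, \beta(x) : x \in R \bigr\rangle ,
\]
a graded $\mathbb{K}[\hbar]$-algebra with $U_\hbar / (\hbar - 1) \cong U$ and $U_\hbar / (\hbar) \cong A$. A standard argument shows that $p : A \to \mathrm{gr}\, U$ is an isomorphism exactly when $U_\hbar$ is flat (equivalently, free) over $\mathbb{K}[\hbar]$: flatness says multiplication by $\hbar$ is injective in each graded degree, which is precisely the statement that specializing $\hbar$ does not collapse any graded dimension. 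So the task becomes proving flatness of $U_\hbar$.

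The second step is to produce a candidate free resolution of $U_\hbar$, as a $U_\hbar$-bimodule, by deforming the Koszul complex of $A$. Since $R = \pi(P)$, the subspaces
\[
R^{(k)} := \bigcap_{i + j = k - 2} V^{\otimes i} \otimes R \otimes V^{\otimes j} \subset V^{\otimes k}
\]
depend only on $A$, and the Koszulness of $A$ guarantees that the bimodule Koszul complex $\cdots \to A \otimes R^{(2)} \otimes A \to A \otimes V \otimes A \to A \otimes A \to A \to 0$ is exact. One transports this to the free $U_\hbar$-bimodules $U_\hbar \otimes R^{(k)} \otimes U_\hbar$, deforming the differential: the classical map, which moves one tensor leg of $V$ into the adjacent bimodule factor, is corrected by lower-order contributions built from $\hbar\,\alpha$ and $\hbar^2\,\beta$.

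The crux is the third step: checking that the deformed differential squares to zero. A priori this is an infinite list of identities --- one for each way of pushing two legs out of $R^{(k)}$ --- and, sorted by powers of $\hbar$, they take the form of the familiar ``Jacobi-type'' relations. In the lowest nontrivial internal degree, on $R^{(3)} = (R \otimes V) \cap (V \otimes R)$, they are exactly the necessary conditions assumed in the hypothesis (which in the quadratic-constant case $\alpha = 0$ degenerate to $\beta \otimes \id - \id \otimes \beta = 0$). Koszulness supplies the rest: by Backelin's criterion the subspaces $V^{\otimes i} \otimes R \otimes V^{\otimes j}$ generate a distributive lattice inside every $V^{\otimes n}$, and distributivity forces each higher identity to be a formal consequence of the degree-$3$ ones. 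Hence the deformed differential really does define a complex of free $\mathbb{K}[\hbar]$-modules.

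The last step is to pass from ``complex'' to ``resolution'' and conclude. Filter the deformed Koszul complex by powers of $\hbar$; its associated graded is the undeformed Koszul complex of $A$, which is acyclic by hypothesis, and a bounded-below filtered complex with acyclic associated graded is itself acyclic. Thus the deformed complex is a genuine free resolution of $U_\hbar$ built on the same spaces $R^{(k)}$ as the Koszul resolution of $A$; comparing graded dimensions then forces $U_\hbar$ and $A \otimes \mathbb{K}[\hbar]$ to share the same Hilbert series, so $U_\hbar$ is free over $\mathbb{K}[\hbar]$, and by the first step $p : A \to \mathrm{gr}\, U$ is an isomorphism. The main obstacle is the third step: turning the qualitative hypothesis ``$A$ is Koszul'' into the combinatorial fact that all higher coherence conditions for $d_\hbar^2 = 0$ reduce to the single condition on $R^{(3)}$ already assumed --- this is the technical core of \cite[Theorem 4.1]{pbw-deformation}.
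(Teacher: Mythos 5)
The paper offers no proof of this statement: it is quoted verbatim from Braverman--Gaitsgory and used as a black box, so there is nothing internal to compare against. Your reconstruction has the right overall architecture --- homogenize, deform the Koszul complex, get $d^2=0$ from the degree-$3$ conditions, get exactness from Koszulity, conclude flatness over $\mathbb{K}[\hbar]$ --- and this is indeed how the theorem is proved in the literature. But two of your steps have genuine gaps.

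The concrete failure is in the last step. If the complex with terms $U_\hbar \otimes R^{(k)} \otimes U_\hbar$ were an exact graded free bimodule resolution of $U_\hbar$, taking Euler characteristics of Hilbert series would give $h_{U_\hbar}(t)\cdot\sum_k(-1)^k\dim(R^{(k)})\,t^k=1$, and numerical Koszulity of $A$ identifies the alternating sum with $1/h_A(t)$; hence $h_{U_\hbar}=h_A$. That contradicts the conclusion you want, namely $h_{U_\hbar}(t)=h_A(t)/(1-t)$. So the complex you describe, built only on the syzygy spaces $R^{(k)}$ of $A$, cannot resolve $U_\hbar$: it is missing the syzygies involving $\hbar$ (the Koszul dual of the homogenization is $A^!$ tensored with an exterior variable dual to $\hbar$, which contributes the missing factor $(1-t)$). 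The repair is either to use the full Koszul complex of the homogenized quadratic algebra on $V\oplus\mathbb{K}\hbar$, or to drop $\hbar$ altogether and deform the filtered bimodule Koszul complex of $U$ itself, doing the dimension count degree by degree. Relatedly, your filtration argument is circular as stated: the associated graded of $U_\hbar$ for the $\hbar$-adic filtration is $A\otimes\mathbb{K}[\hbar]$ only once $\hbar$ is known to be a non-zero-divisor, which is exactly what is being proved; the filtration that works is the one by internal $T(V)$-degree.

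A smaller issue sits in your third step. The reduction of the higher $d^2=0$ identities to the degree-$3$ ones is not where distributivity enters. What is actually needed is (i) well-definedness of the corrections, i.e.\ $(\beta\otimes\id^{\otimes k-2})(R^{(k)})\subseteq R^{(k-2)}$ and $(\alpha\otimes\id^{\otimes k-2})(R^{(k)})\subseteq R^{(k-1)}$ --- the first follows from the elementary identity $(R\otimes V^{\otimes k-2})\cap(V^{\otimes 2}\otimes R^{(k-2)})=R\otimes R^{(k-2)}$, but the second is not automatic and uses the first necessary condition --- and (ii) cancellation of the overlapping terms of $d^2$, which follows from the degree-$3$ identities simply because $R^{(k)}\subseteq V^{\otimes i}\otimes R^{(3)}\otimes V^{\otimes k-3-i}$. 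Koszulity (equivalently, distributivity of the relevant lattices) is consumed by the exactness of the associated graded complex, not by $d^2=0$. In the quadratic-constant case $\alpha=0$ used in this paper these points are mild, but in the generality of the theorem they are where the work lies.
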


There are many equivalent definitions of a Koszul algebra. For these definitions the reader can consult the appendix of \cite{pbw-deformation} or the book \cite{quadratic-algebras}.

\subsection{Twisted tensor products}

The idea of a twisted tensor product of two algebras is a very natural one, and for this reason it has been studied independently by many authors. Here we will follow the treatment of \cite{twisted-tensor}.
We assume all algebras to be unital and all homomorphisms to preserve the units.

\begin{definition}[{\cite[Definition 2.1]{twisted-tensor}}]
Let $A$ and $B$ be algebras over $\mathbb{K}$. A \emph{twisted tensor product} of $A$ and $B$ is an algebra $C$, together with two injective homomorphisms $i_A : A \to C$ and $i_B : B \to C$,  such that the canonical linear map $(i_A, i_B) : A \otimes_\mathbb{K} B \to C$ defined by $(i_A, i_B)(a \otimes b) = i_A(a) i_B(b)$ is a linear isomorphism.
\end{definition}

Twisted tensor products can be characterized in terms of twisting maps. Indeed, given any twisted tensor product $(C, i_A, i_B)$ of two algebras $A$ and $B$, there exists a twisting map $\tau$, as defined below, such that $C$ is isomorphic to $A \otimes_\tau B$ \cite[Proposition 2.7]{twisted-tensor}.

Given a $\mathbb{K}$-linear map $\tau : B \otimes A \to A \otimes B$, and denoting by $\mu_A$ and $\mu_B$ multiplication on $A$ and $B$, we can consider as a candidate for multiplication on $A \otimes B$ the map
\[
\mu_\tau := (\mu_A \otimes \mu_B) (\id_A \otimes \tau \otimes \id_B).
\]
The notion of twisting map guarantees that this is an associative multiplication. The vector space $A \otimes B$, together with the multiplication map $\mu_\tau$, will be denoted by $A \otimes_\tau B$.

\begin{definition}[{\cite[Proposition/Definition 2.3]{twisted-tensor}}]
Let $\tau : B \otimes A \to A \otimes B$ be a $\mathbb{K}$-linear map. Then it is called a \emph{twisting map} if for all $a \in A$ and $b \in B$ it satisfies the conditions $\tau(b \otimes 1) = 1 \otimes b$ and $\tau(1 \otimes a) = a \otimes 1$, and moreover
\[
\tau (\mu_B \otimes \mu_A) = \mu_\tau (\tau \otimes \tau) (\id_B \otimes \tau \otimes \id_A).
\]
The multiplication $\mu_\tau$ is associative if and only if $\tau$ is a twisting map.

If $A$ and $B$ are graded algebras, we say that $\tau$ is \emph{graded} if $\tau(B_j \otimes A_i) \subseteq A_i \otimes B_j$ for all $i, j$.
\end{definition}

We now summarize some properties related to twisted tensor products in the presence of additional structure, see for example \cite[Section 1]{pbw-products}.
Recall that if $H$ is a Hopf algebra, then an $H$-module algebra $A$ is an algebra which is an $H$-module and such that $h \cdot (a a^\prime) = (h_{(1)} \cdot a) (h_{(2)} \cdot a^\prime)$ and $h \cdot 1_A = \varepsilon(h) 1_A$ for all $h \in H$ and $a, a^\prime \in A$.

\begin{proposition}
\label{prop:twisted-properties}
1) Let $A$ and $B$ be $H$-module algebras. If $\tau$ is an $H$-module homomorphism then the twisted tensor product $A \otimes_\tau B$ is an $H$-module algebra.

2) Let $A$ and $B$ be Koszul algebras. If $\tau$ is graded then $A \otimes_\tau B$ is a Koszul algebra.
\end{proposition}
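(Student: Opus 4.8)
The two parts can be proved independently; the first is essentially formal, the second homological. For (1), equip the underlying vector space $A \otimes B$ of $A \otimes_\tau B$ with the standard tensor-product $H$-module structure $h \cdot (a \otimes b) = (h_{(1)} \cdot a) \otimes (h_{(2)} \cdot b)$. The point is that the multiplication $\mu_\tau = (\mu_A \otimes \mu_B)(\id_A \otimes \tau \otimes \id_B)$ is, by its very definition, assembled out of tensor products and composites of the maps $\mu_A$, $\mu_B$, $\tau$ and identities. Now $\mu_A \colon A \otimes A \to A$ and $\mu_B \colon B \otimes B \to B$ are morphisms of $H$-modules --- this is precisely the module-algebra axiom --- and $\tau$ is a morphism of $H$-modules by hypothesis; since the category of $H$-modules is monoidal, tensor products and composites of morphisms of $H$-modules are again morphisms of $H$-modules. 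Identifying $(A \otimes B)^{\otimes 2}$ with $A \otimes B \otimes A \otimes B$ as $H$-modules via coassociativity, it follows that $\mu_\tau \colon (A \otimes_\tau B)^{\otimes 2} \to A \otimes_\tau B$ is a morphism of $H$-modules, which is exactly the module-algebra axiom for $\mu_\tau$. The unit axiom $h \cdot (1_A \otimes 1_B) = \varepsilon(h)(1_A \otimes 1_B)$ follows from $h \cdot 1_A = \varepsilon(h) 1_A$, $h \cdot 1_B = \varepsilon(h) 1_B$ and the counit axiom, while associativity of $\mu_\tau$ is already known because $\tau$ is a twisting map. I foresee no real obstacle here.

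For (2), write $A = Q(V_A, R_A)$ and $B = Q(V_B, R_B)$. Since $\tau$ is graded it restricts to a degree-$(1,1)$ map $V_B \otimes V_A \to V_A \otimes V_B$, and one checks that $C = A \otimes_\tau B$ is quadratic on $V_A \oplus V_B$ with relation space $R_C = R_A \oplus R_B \oplus \{\, \xi - \tau(\xi) : \xi \in V_B \otimes V_A \,\}$ inside $(V_A \oplus V_B)^{\otimes 2}$; dualizing, $C^!$ is again a twisted tensor product built from $A^!$ and $B^!$ (with the transposed twisting map), so in particular $C(t) = A(t) B(t)$ and $C^!(t) = A^!(t) B^!(t)$ as graded vector spaces and the numerical Koszulity identity is inherited from that of $A$ and $B$ --- though this alone is not sufficient. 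To get genuine Koszulness I would exhibit a linear graded free resolution of the trivial module $\mathbb{K}$ over $C$, the existence of which is one of the standard equivalent characterisations of a Koszul algebra. The candidate is the twisted tensor product $P^\bullet = K_A^\bullet \otimes_\tau K_B^\bullet$ of the Koszul complexes $K_A^\bullet$, $K_B^\bullet$ of $A$ and $B$: as a graded object it is $\bigoplus_{p+q = \bullet} C \otimes (A^!_p)^* \otimes (B^!_q)^*$, placed in homological degree $p+q$, which also equals its internal degree, so that $P^\bullet$ is linear; it is equipped with a $C$-linear differential built out of the differentials of $K_A^\bullet$ and $K_B^\bullet$ and a lift of $\tau$, together with the augmentation $C \to \mathbb{K}$.

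It then remains to show that $P^\bullet$ is exact in positive degrees. Filtering $P^\bullet$ by the tensor-degree in the $A$-direction --- a filtration respected by the differential precisely because $\tau$ is graded --- the associated graded complex is the ordinary tensor product $K_A^\bullet \otimes K_B^\bullet$ with the product differential. Since $B$ is Koszul, $K_B^\bullet$ is a resolution of $\mathbb{K}$, so the first page of the resulting spectral sequence is $K_A^\bullet \otimes \mathbb{K}$, and the second page is $H_\bullet(K_A^\bullet) \otimes \mathbb{K} = \mathbb{K}$ by Koszulness of $A$; the spectral sequence is finite in each internal degree ($A$ and $B$ being connected graded), so this forces $P^\bullet$ to be exact in positive degrees, whence $C$ is Koszul. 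The step I expect to be the main obstacle is making this precise: writing down the differential on $K_A^\bullet \otimes_\tau K_B^\bullet$ explicitly and verifying $d^2 = 0$ --- this is where the twisting-map identities for $\tau$ are genuinely used --- after which the filtration argument is routine. Alternatively, one may simply invoke the corresponding statement from \cite[Section 1]{pbw-products}.
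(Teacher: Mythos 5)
The paper does not actually prove this proposition: it is stated as a summary of known facts with a pointer to \cite[Section 1]{pbw-products}, so your proposal is strictly more detailed than what appears in the text. Your part (1) is complete and is the argument anyone would give: $\mu_\tau$ is a composite of $H$-module maps in a monoidal category, so the module-algebra axiom is automatic; nothing to add. For part (2) your strategy --- build the twisted tensor product $K_A^\bullet \otimes_\tau K_B^\bullet$ of the Koszul complexes, check it is a linear free resolution of $\mathbb{K}$ by a filtration/spectral-sequence argument --- is exactly the standard route and is essentially what the cited reference carries out, so invoking it at the end is legitimate and is what the paper itself does. Two caveats on your sketch. First, the step you yourself flag as the main obstacle (defining the differential on $K_A^\bullet \otimes_\tau K_B^\bullet$ and verifying $d^2=0$) is genuinely the entire content of the result: one needs the graded twisting map to induce maps $(B^!_q)^* \otimes (A^!_p)^* \to (A^!_p)^* \otimes (B^!_q)^*$ compatible with the Koszul differentials, and this is where the bigraded condition $\tau(B_j \otimes A_i) \subseteq A_i \otimes B_j$ is used in an essential way; as written your proposal asserts rather than proves this. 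Second, a small imprecision in the filtration argument: if you filter by the homological degree in the $A$-direction, the associated graded differential is only the ($B$-direction) component that preserves the filtration, not ``the ordinary tensor product with the product differential''; the argument then runs in two steps (first take homology in the $B$-direction using freeness of $C$ as a right $B$-module and Koszulity of $B$, obtaining $K_A^\bullet \otimes \mathbb{K}$ on the next page, then use Koszulity of $A$), after which convergence follows from finiteness in each internal degree as you say. With these points either filled in or delegated to \cite{pbw-products}, the argument is sound.
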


\section{Braided exterior algebras and braidings}
\label{sec:braidings}

We begin this section by recalling the notion of braided exterior algebra, as defined in \cite{bezw}.
Next we determine the braidings associated with certain simple modules of $\Uqsl$.

\subsection{Braided exterior algebras}

The notion of braided exterior algebra provides a quantum version of the classical exterior algebra.
Given a $\Uqg$-module $V$, we can functorially associate to it an algebra $\Lambda_q(V)$ as follows \cite{bezw}. First we define the subspace
\begin{equation}
\label{eq:s2-braided}
S_q^2 V := \bigoplus_{\lambda > 0} \ker(\braidR_{V, V} - \lambda \ \id) \subset V \otimes V.
\end{equation}
This space is the span of the positive eigenvectors of $\braidR_{V, V}$, hence it is a quantum analogue of the space of symmetric $2$-tensors in $V \otimes V$.
Then we define the \emph{braided exterior algebra} $\Lambda_q(V) := T(V) / \langle S_q^2 V \rangle$. Braided symmetric algebras are also defined in a similar fashion.
By definition we have that $\Lambda_q(V)$ is a quadratic algebra. Moreover, since the braiding $\braidR_{V, V}$ is a module map, it follows that $\Lambda_q(V)$ is a $\Uqg$-module algebra.

While the definition makes sense for any $\Uqg$-module $V$, only for some particular ones we have that the Hilbert series of $\Lambda_q(V)$ and of the exterior algebra $\Lambda(V)$ coincide. When this happens we say that $V$ is a  \emph{flat} module. In this case it is known that $\Lambda_q(V)$ is a Koszul algebra.
Flat simple modules have been completely classified in \cite{zwi}.
The outcome of the classification is that the flat modules essentially correspond to abelian nilradicals of parabolic subalgebras of $\mathfrak{g}$, see \cite[Main Theorem 5.6]{zwi}.
Geometrically these can be interpreted as tangent spaces of the corresponding generalized flag manifolds.

\subsection{Braidings}

In the following we will be interested in studying various algebras associated to the fundamental module of $\Uqsl$, which we denote by $V := V(\omega_1)$, and its dual. The first step will be to determine the braiding associated with these modules.

The module $V(\omega_1)$ has a weight basis $\{ v_i \}_{i = 1}^N$, where the vector $v_i$ has weight $\lambda_i := \omega_i - \omega_{i - 1}$, with the convention that $\omega_0 = \omega_N = 0$. The $\Uqsl$-module structure can be realized by
\[
K_i v_j = q^{\delta_{i j} - \delta_{i, j - 1}} v_j, \quad
E_i v_j = \delta_{i, j - 1} v_{j - 1}, \quad
F_i v_j = \delta_{i j} v_{j + 1}.
\]
The braiding for $V \otimes V$ is well-known and can be found for example in \cite[Section 8.4.2]{klsc} (taking into account the rescaling by $q^{1 / N}$). Here and in the following we will write $\theta(n)$ for the Heaviside step function defined by $\theta(n) = 0$ for $n \leq 0$ and $\theta(n) = 1$ for $n > 0$.

\begin{proposition}
Let $V = V(\omega_1)$. Then we have
\[
\braidR_{V, V} (v_i \otimes v_j) = q^{\delta_{i j} - \frac{1}{N}} v_j \otimes v_i + \theta(j - i) q^{- \frac{1}{N}} (q - q^{-1}) v_i \otimes v_j.
\]
\end{proposition}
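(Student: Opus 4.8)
The plan is to determine $\braidR_{V,V}$ entirely from the characterization recalled in \cref{sec:notation}: it is the unique $\Uqsl$-module endomorphism of $V \otimes V$ with $\braidR_{V,V}(v \otimes w) = q^{(\mathrm{wt}(v),\mathrm{wt}(w))}\, w \otimes v$ up to a correction consisting of terms $w_k \otimes v_k$ with $\mathrm{wt}(w_k) > \mathrm{wt}(w)$ and $\mathrm{wt}(v_k) < \mathrm{wt}(v)$. Realizing the weight $\lambda_i$ of $v_i$ in the usual way as $\epsilon_i$ minus the average of $\epsilon_1, \dots, \epsilon_N$, one has $(\lambda_i, \lambda_j) = \delta_{ij} - 1/N$, and for $i < j$ one has $\lambda_i - \lambda_j = \alpha_i + \cdots + \alpha_{j-1}$, so that $\lambda_i > \lambda_j$ exactly when $i < j$. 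Since $\braidR := \braidR_{V,V}$ preserves weights and $\{a,b\} = \{i,j\}$ is the only pair with $\lambda_a + \lambda_b = \lambda_i + \lambda_j$, each weight space with $i \neq j$ is spanned by $v_i \otimes v_j$ and $v_j \otimes v_i$, while that of $2\lambda_i$ is spanned by $v_i \otimes v_i$ alone. I would then compute $\braidR$ on these spaces in three steps, starting from the highest weight vector $v_1 \otimes v_1$ and using repeatedly that $\braidR$ commutes with the action of $\Delta(E_k)$ and $\Delta(F_k)$.

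Step 1, the case $i > j$. Here $\mathrm{wt}(v_i) = \lambda_i < \lambda_j$, so no vector of $\langle v_i \otimes v_j, v_j \otimes v_i \rangle$ has first tensor leg of weight strictly above $\lambda_j$; hence the triangular condition forces $\braidR(v_i \otimes v_j) = q^{(\lambda_i, \lambda_j)} v_j \otimes v_i = q^{-1/N} v_j \otimes v_i$, which is the claimed formula since $\theta(j - i) = 0$. Step 2, the diagonal and the adjacent cases. Write $\braidR(v_i \otimes v_i) = \gamma_i\, v_i \otimes v_i$. For $i = 1$ the triangular condition gives $\gamma_1 = q^{(\lambda_1, \lambda_1)} = q^{1 - 1/N}$ immediately, since $v_1 \otimes v_1$ is the highest weight vector and admits no correction. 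For $i \geq 2$, I would apply $\braidR \circ \Delta(E_{i-1}) = \Delta(E_{i-1}) \circ \braidR$ to $v_i \otimes v_i$. From $E_{i-1} v_i = v_{i-1}$ and $K_{i-1} v_i = q^{-1} v_i$ one gets $\Delta(E_{i-1})(v_i \otimes v_i) = v_{i-1} \otimes v_i + q^{-1} v_i \otimes v_{i-1}$, while the characterization gives $\braidR(v_{i-1} \otimes v_i) = q^{-1/N} v_i \otimes v_{i-1} + r\, v_{i-1} \otimes v_i$ for some unknown $r$, and $\braidR(v_i \otimes v_{i-1}) = q^{-1/N} v_{i-1} \otimes v_i$ by Step 1. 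Substituting these into the identity and comparing coefficients yields $\gamma_i = q^{1 - 1/N}$ and $r = q^{-1/N}(q - q^{-1})$, which is the claim for the pair $(i-1, i)$.

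Step 3, the pairs $(i, j)$ with $j \geq i + 2$, handled by induction on $j - i$ with base case $j - i = 1$ from Step 2. Since $i < j - 1$, the term $F_{j-1} v_i \otimes K_{j-1}^{-1} v_{j-1}$ vanishes, so $\Delta(F_{j-1})(v_i \otimes v_{j-1}) = v_i \otimes v_j$; applying $\braidR$, moving it past $\Delta(F_{j-1})$, and inserting the inductively known $\braidR(v_i \otimes v_{j-1})$ yields $\braidR(v_i \otimes v_j) = q^{-1/N} v_j \otimes v_i + q^{-1/N}(q - q^{-1}) v_i \otimes v_j$ after a short computation using only $F_{j-1} v_{j-1} = v_j$ and the vanishing of the other coproduct terms (the hypothesis $i \le j - 2$ being what makes the relevant $K$-weights trivial). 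This covers all cases.

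The step I expect to be the real obstacle is Step 2. Evaluated on weight vectors, the module map property never produces $\braidR$ on a single basis element $v_a \otimes v_b$, only on combinations such as $v_{i-1} \otimes v_i + q^{-1} v_i \otimes v_{i-1}$, so on each two-dimensional weight space one must combine it with the partial information coming from the triangular (highest-weight) constraint in order to disentangle the individual coefficients; applying the raising operator $\Delta(E_{i-1})$ to the eigenvector $v_i \otimes v_i$ is exactly what delivers both the eigenvalue $\gamma_i$ and the off-diagonal coefficient at once. Alternatively one may simply cite \cite[Section 8.4.2]{klsc} and keep track of the overall rescaling by $q^{1/N}$, but the argument above is self-contained.
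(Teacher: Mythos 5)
Your argument is correct, but it is worth noting that the paper does not actually prove this proposition: it simply cites \cite[Section 8.4.2]{klsc} and records the rescaling by $q^{1/N}$. What you give instead is a self-contained derivation from the uniqueness characterization of $\braidR_{V,W}$ stated in \cref{sec:notation}, and it is exactly the technique the paper itself uses later for $\braidR_{V^*,V}$ in \cref{prop:braiding-vstar-v}: constrain each two-dimensional weight space $\langle v_i\otimes v_j, v_j\otimes v_i\rangle$ by the triangularity condition, then propagate from known cases using that $\braidR$ commutes with $\Delta(E_k)$ and $\Delta(F_k)$. I checked the computations: $(\lambda_i,\lambda_j)=\delta_{ij}-1/N$; in Step 1 neither candidate correction term has first leg of weight strictly above $\lambda_j$ when $i>j$, so the formula is forced; in Step 2 the identity $\braidR\Delta(E_{i-1})(v_i\otimes v_i)=\Delta(E_{i-1})\braidR(v_i\otimes v_i)$ gives $q^{-1/N}=\gamma_i q^{-1}$ and $r+q^{-1-1/N}=\gamma_i$, hence $\gamma_i=q^{1-1/N}$ and $r=q^{-1/N}(q-q^{-1})$; and in Step 3 the hypothesis $j\ge i+2$ indeed kills both $F_{j-1}v_i$ and the $K_{j-1}^{-1}$-weight on $v_i$, so the induction closes. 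One small simplification you could make: since the weight space of $2\lambda_i$ is one-dimensional and a correction term would require first leg of weight strictly greater than $\lambda_i$, the triangularity alone already gives $\gamma_i=q^{1-1/N}$ for \emph{every} $i$, not just $i=1$; the $E_{i-1}$ computation is then only needed to extract the off-diagonal coefficient $r$. The trade-off between the two routes is the usual one: the citation is shorter, while your derivation keeps the paper self-contained and makes transparent why the same method yields \cref{prop:braiding-vstar,prop:braiding-vstar-v}.
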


Next we will need the braiding for $V^* \otimes V^*$. It is very similar to that of $V \otimes V$. First we fix a basis for $V^*$ as follows.
Since $V$ is a simple module, there exists a unique (up to a constant) $\Uqsl$-invariant pairing $\langle \cdot, \cdot \rangle : V^* \otimes V \to \mathbb{C}$. Then we denote by $\{w_i\}_{i = 1}^N$ the dual basis, that is $\langle w_i, v_j \rangle = \delta_{i j}$.
It is easy to see that the $\Uqsl$-module structure is given by
\[
K_i w_j = q^{\delta_{i, j - 1} - \delta_{i j}} w_j, \quad
E_i w_j = - \delta_{i j} q^{-1} w_{j + 1}, \quad
F_i w_j = - \delta_{i, j - 1} q w_{j - 1}.
\]

\begin{proposition}
\label{prop:braiding-vstar}
Let $V^* \cong V(\omega_{N - 1})$. Then we have
\[
\braidR_{V^*, V^*} (w_i \otimes w_j) = q^{\delta_{i j} - \frac{1}{N}} w_j \otimes w_i + \theta(i - j) q^{- \frac{1}{N}} (q - q^{-1}) w_i \otimes w_j.
\]
\end{proposition}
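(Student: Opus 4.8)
The plan is to use that $\braidR_{V^*, V^*}$ is a $\Uqsl$-module endomorphism of $V^* \otimes V^*$, so it is pinned down by its value on a cyclic generator together with the action of the lowering operators, exactly as outlined in \cref{sec:notation}. Since $w_j$ has weight $-\lambda_j$ and the form is rescaled so that $(\lambda_i, \lambda_j) = \delta_{ij} - 1/N$, the general description of the braiding already gives the leading term: $\braidR_{V^*, V^*}(w_i \otimes w_j) = q^{\delta_{ij} - 1/N}\, w_j \otimes w_i + (\text{corrections})$, the corrections being of strictly higher weight in the first leg and strictly lower weight in the second. In particular $w_N$, of weight $\omega_{N-1}$, is the highest weight vector of $V^* \cong V(\omega_{N-1})$, and $w_N \otimes w_N$ spans the one-dimensional top weight space, whence $\braidR_{V^*, V^*}(w_N \otimes w_N) = q^{1 - 1/N}\, w_N \otimes w_N$ with no correction.

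The shape of the corrections can be read off before any computation. In the fundamental representation one has $\lambda_i + \lambda_j = \lambda_k + \lambda_l$ only when $\{i, j\} = \{k, l\}$, so weight preservation confines $\braidR_{V^*, V^*}(w_i \otimes w_j)$ to the span of $w_j \otimes w_i$ and $w_i \otimes w_j$; and the triangularity of the braiding forces the coefficient of $w_i \otimes w_j$ to vanish unless $-\lambda_i > -\lambda_j$, that is unless $i > j$ (recall $\lambda_1 > \cdots > \lambda_N$). Hence $\braidR_{V^*, V^*}(w_i \otimes w_j) = q^{\delta_{ij} - 1/N}\, w_j \otimes w_i + \theta(i - j)\, c_{ij}\, w_i \otimes w_j$ for some scalars $c_{ij}$, and what remains is to show $c_{ij} = q^{-1/N}(q - q^{-1})$.

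To compute these constants I would apply the comultiplied lowering operators $\Delta(F_k) = F_k \otimes K_k^{-1} + 1 \otimes F_k$ to $w_N \otimes w_N$ and use that $\braidR_{V^*, V^*}$ intertwines them. Already one application of $F_{N-1}$ yields a linear relation among $\braidR_{V^*, V^*}(w_{N-1} \otimes w_N)$, which carries no correction term because $N - 1 < N$, and $\braidR_{V^*, V^*}(w_N \otimes w_{N-1})$; comparing the coefficients of $w_N \otimes w_{N-1}$ gives $c_{N, N-1} = q^{-1/N}(q - q^{-1})$. A routine induction, shifting indices by further applications of $E_k$ and $F_k$ and using that $\braidR_{V^*, V^*}$ commutes with their action, then shows that $c_{ij}$ is independent of the pair $i > j$, which is the stated formula. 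The only mildly delicate point, and the place where essentially all of the (mechanical) work sits, is the bookkeeping ensuring this independence.

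Alternatively one can avoid the computation through naturality. The diagram automorphism $\tau$ of $\Uqsl$ exchanging $E_i \leftrightarrow E_{N-i}$, $F_i \leftrightarrow F_{N-i}$, $K_i \leftrightarrow K_{N-i}$ is a Hopf algebra automorphism preserving the universal $R$-matrix, and $V^* \cong \tau^* V$ via $v_j \mapsto (-q)^{j-1}\, w_{N+1-j}$. Transporting $\braidR_{V, V}$ along this isomorphism, and noting that the accompanying diagonal rescaling leaves the form of the braiding unchanged, turns the formula of the previous proposition into the one claimed here under the substitution $i \mapsto N+1-i$, $j \mapsto N+1-j$, which in particular sends $\theta(j - i)$ to $\theta(i - j)$. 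I expect the author to follow the first route, which parallels the derivation of $\braidR_{V, V}$.
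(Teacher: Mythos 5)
Your proof is correct. The paper in fact gives no proof of this proposition (it is stated as being ``very similar'' to the $V\otimes V$ case, which is quoted from Klimyk--Schm\"udgen), but your first route --- weight conservation plus the triangularity condition to reduce to the single unknown coefficient $c_{ij}$ for $i>j$, then the module-map property applied to $\Delta(F_{N-1})$ acting on $w_N\otimes w_N$ and an index-shifting induction --- is exactly the method the paper describes at the end of \cref{sec:notation} and carries out explicitly for \cref{prop:braiding-vstar-v}; your computation $c_{N,N-1}=q^{-1/N}(q-q^{-1})$ checks out. Your alternative route via the diagram automorphism is also sound, and you are right to base the intertwining on invariance of the universal $R$-matrix rather than on \cref{lem:iso-v-vstar}, since the paper proves that lemma \emph{from} this proposition and the argument would otherwise be circular.
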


We will also need the braiding for $V^* \otimes V \to V \otimes V^*$, which we determine below.

\begin{proposition}
\label{prop:braiding-vstar-v}
The braiding $\braidR_{V^*, V} : V^* \otimes V \to V \otimes V^*$ is given by
\[
\braidR_{V^*, V} (w_i \otimes v_j) = q^{\frac{1}{N} - \delta_{i j}} v_j \otimes w_i - \delta_{i j} q^{\frac{1}{N}} (q - q^{-1}) \sum_{k = 1}^{i - 1} v_k \otimes w_k.
\]
\end{proposition}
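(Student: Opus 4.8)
**Proof plan for Proposition 3.6 (braiding $\braidR_{V^*, V}$).**

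The plan is to compute $\braidR_{V^*, V}$ explicitly by exploiting the characterization recalled in \cref{sec:notation}: the braiding is the unique $\Uqsl$-module map $V^* \otimes V \to V \otimes V^*$ satisfying $\braidR_{V^*, V}(x) = q^{(\mathrm{wt}(w_i), \mathrm{wt}(v_j))} v_j \otimes w_i + (\text{higher weight terms in the first factor})$. I would begin by pinning down the leading coefficient: since $\mathrm{wt}(w_i) = -\lambda_i$ and $\mathrm{wt}(v_j) = \lambda_j$, using $(\lambda_i, \lambda_j) = \delta_{ij} - \tfrac{1}{N}$ for the fundamental module of $\mathfrak{sl}_N$, we get $(\mathrm{wt}(w_i), \mathrm{wt}(v_j)) = \tfrac{1}{N} - \delta_{ij}$, which matches the stated leading term $q^{\frac{1}{N} - \delta_{ij}} v_j \otimes w_i$. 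The correction terms must have strictly higher weight in the outgoing first tensor factor (and strictly lower in the second), and a weight count shows the only room for such terms is when $i = j$, with corrections of the form $\sum_{k < i} c_k\, v_k \otimes w_k$; so the ansatz on the right-hand side has the correct shape and only the constants $c_k$ need to be determined.

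Next I would fix the highest-weight vector and propagate. The vector $w_N \otimes v_N$ has the highest weight in $V^* \otimes V$ (weight $-\lambda_N + \lambda_N = 0$, but more precisely it is the one with no higher-weight partners), so $\braidR_{V^*, V}(w_N \otimes v_N) = q^{\frac{1}{N} - 1} v_N \otimes w_N$ with no corrections. Then I would apply the generators $F_i$ (equivalently $E_i$) to walk down the weight lattice, using that $\braidR_{V^*, V}$ is a module map together with the coproduct formulas $\Delta(F_i) = F_i \otimes K_i^{-1} + 1 \otimes F_i$ and $\Delta(E_i) = E_i \otimes 1 + K_i \otimes E_i$, and the explicit actions on $\{v_j\}$ and $\{w_j\}$ given above. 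Acting repeatedly on $w_N \otimes v_N$ reaches all vectors $w_i \otimes v_j$; matching coefficients in the resulting recursions determines the $c_k$. A clean way to organize this is to apply $E_i$ to the already-known image of $w_{i+1} \otimes v_{i+1}$ (or to $w_i \otimes v_j$ with $i \neq j$, which has no corrections) and read off the constraint on the diagonal term $w_i \otimes v_i$; the recursion should collapse to $c_k = -q^{\frac{1}{N}}(q - q^{-1})$ independent of $i$, giving the stated formula.

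The main obstacle is purely bookkeeping: keeping track of the $K_i^{\pm 1}$ factors coming from the coproduct when they hit the dual basis vectors $w_j$ (whose $K$-eigenvalues are $q^{\delta_{i,j-1} - \delta_{ij}}$, the negatives of those for $v_j$), and making sure the off-diagonal entries of $\braidR_{V^*, V}$ genuinely vanish rather than merely being forced to have higher weight. I would double-check consistency by verifying that the formula is compatible with the known duality: $\braidR_{V^*, V}$ and $\braidR_{V, V^*}$ should be mutually inverse up to the expected scalar, and the eigenvalues of $\braidR_{V^*, V}\braidR_{V, V^*}$ (acting on $V \otimes V^*$) should reproduce the ribbon/twist data for $V$ and $V^*$. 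As a final sanity check I would confirm the $N = 1$ degenerate limit and the classical $q \to 1$ limit, where the sum disappears and $\braidR_{V^*, V}$ reduces to the ordinary flip, as it must.
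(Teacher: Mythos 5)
Your overall strategy is the same as the paper's: constrain the shape of $\braidR_{V^*,V}(w_i\otimes v_j)$ by a weight count (leading coefficient $q^{\frac{1}{N}-\delta_{ij}}$, corrections only possible on the diagonal $i=j$ and only of the form $\sum_{k<i}c_k\,v_k\otimes w_k$), then determine the constants by propagating with the generators using that $\braidR_{V^*,V}$ is a module map. That part is sound.

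However, your anchor point is wrong, and this is a genuine gap rather than bookkeeping. You claim $\braidR_{V^*,V}(w_N\otimes v_N)=q^{\frac{1}{N}-1}v_N\otimes w_N$ with no corrections because $w_N\otimes v_N$ "has no higher-weight partners." In fact the opposite is true: the defining characterization requires the correction terms $v_k\otimes w_k$ to have $\mathrm{wt}(v_k)>\mathrm{wt}(v_N)$ and $\mathrm{wt}(w_k)<\mathrm{wt}(w_N)$, and since $v_N$ is the \emph{lowest} weight vector of $V$ and $w_N$ the \emph{highest} weight vector of $V^*$, there is maximal room in both factors. The stated formula itself shows this: for $i=j=N$ the correction is the full sum $-q^{\frac{1}{N}}(q-q^{-1})\sum_{k=1}^{N-1}v_k\otimes w_k$. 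The vector that genuinely admits no corrections is $w_1\otimes v_1$ (there $v_1$ is the highest weight vector of $V$, so no $v_k$ with strictly higher weight exists, and $w_1$ is the lowest weight vector of $V^*$); this is the base case the paper uses, inducting upward in $i$ by comparing $\braidR_{V^*,V}F_i(w_{i+1}\otimes v_i)$ with $F_i\braidR_{V^*,V}(w_{i+1}\otimes v_i)$, the latter computable because the off-diagonal values are already known to be $q^{\frac{1}{N}}v_j\otimes w_i$. If you instead seed the recursion at $w_N\otimes v_N$ with no correction terms, the propagation produces a module map that is not the braiding (it fails the triangularity condition at the other end), so the induction must be run from $i=1$ upward. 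With that correction the rest of your plan goes through as in the paper.
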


\begin{proof}
Observe that if $\braidR_{V^*, V} (w_i \otimes v_j)$ contains the term $v_k \otimes w_\ell$ then it must have the same weight as $w_i \otimes v_j$.
Since the vector $w_i$ has weight $-\lambda_i = \omega_{i - 1} - \omega_i$ this leads to the condition $- \lambda_i + \lambda_j = \lambda_k - \lambda_\ell$. More explicitly this reads
\[
\omega_{i - 1} - \omega_i + \omega_j - \omega_{j - 1} = \omega_k - \omega_{k - 1} + \omega_{\ell - 1} - \omega_\ell.
\]
Also from the properties of the braiding we have $\mathrm{wt}(v_k) \geq \mathrm{wt}(v_j)$ and $\mathrm{wt}(w_\ell) \leq \mathrm{wt}(w_i)$.
This is equivalent to $k \leq j$ and $\ell \leq i$, since $v_1$ and $w_N$ are the highest weight vectors of $V$ and $V^*$.

Now consider the case $i < j$. Then, taking into account the facts listed above, we conclude that $v_k \otimes w_\ell$ appears if and only if $(k, \ell) = (j, i)$. Similarly for $i > j$.
From this we conclude that the braiding takes the form $\braidR_{V^*, V} (w_i \otimes v_j) = q^{\frac{1}{N}} v_j \otimes w_i$ for $i \neq j$.

Now consider the case $i = j$. For $i = 1$ the result is true, since $v_1$ is the highest weight vector. We proceed by induction over $i$. We start by computing
\begin{align*}
\braidR_{V^*, V} F_i (w_{i + 1} \otimes v_i) & = \braidR_{V^*, V} (F_i w_{i + 1} \otimes K_i^{-1} v_i) + \braidR_{V^*, V} (w_{i + 1} \otimes F_i v_i) \\
& = - \braidR_{V^*, V} (w_i \otimes v_i) + \braidR_{V^*, V} (w_{i + 1} \otimes v_{i + 1}).
\end{align*}
On the other hand we have
\begin{align*}
F_i \braidR_{V^*, V} (w_{i + 1} \otimes v_i) & = q^{\frac{1}{N}} F_i v_i \otimes K_i^{-1} w_{i + 1} + q^{\frac{1}{N}} v_i \otimes F_i w_{i + 1} \\
& = q^{\frac{1}{N} - 1} v_{i + 1} \otimes w_{i + 1} - q^{\frac{1}{N} + 1} v_i \otimes w_i.
\end{align*}
Since $\braidR_{V^*, V}$ is a $\Uqsl$-module map, the two expressions coincide. Hence we get
\[
\braidR_{V^*, V} (w_{i + 1} \otimes v_{i + 1}) = q^{\frac{1}{N} - 1} v_{i + 1} \otimes w_{i + 1} - q^{\frac{1}{N} + 1} v_i \otimes w_i + \braidR_{V^*, V} (w_i \otimes v_i).
\]
Now plugging in the induction hypothesis and simplifying we obtain the result.
\end{proof}

\subsection{Some useful facts}

In this subsection we collect various useful facts about the modules $V$ and $V^*$ and their braidings.
We start by giving a vector space isomorphism between $V$ and $V^*$ which intertwines their respective braidings.

\begin{lemma}
\label{lem:iso-v-vstar}
We have a vector space isomorphism $\psi : V \to V^*$, defined by $\psi(v_i) = w_{N + 1 - i}$, such that $\braidR_{V^*, V^*} (\psi \otimes \psi) = (\psi \otimes \psi) \braidR_{V, V}$.
\end{lemma}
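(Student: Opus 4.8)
The plan is a direct verification on the weight basis, exploiting the explicit formulas for $\braidR_{V, V}$ and $\braidR_{V^*, V^*}$ established in the two previous propositions. Since $\psi$ is manifestly a linear isomorphism (it sends one basis bijectively to another), it suffices to compare the two sides of the claimed identity on an arbitrary basis vector $v_i \otimes v_j$ of $V \otimes V$.

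First I would expand the left-hand side: applying the formula for $\braidR_{V, V}$ and then $\psi \otimes \psi$, and using $\psi(v_k) = w_{N + 1 - k}$, one gets
\[
(\psi \otimes \psi) \braidR_{V, V}(v_i \otimes v_j) = q^{\delta_{i j} - \frac{1}{N}} w_{N + 1 - j} \otimes w_{N + 1 - i} + \theta(j - i) q^{-\frac{1}{N}}(q - q^{-1}) w_{N + 1 - i} \otimes w_{N + 1 - j}.
\]
For the right-hand side I would write $a = N + 1 - i$ and $b = N + 1 - j$, so that $(\psi \otimes \psi)(v_i \otimes v_j) = w_a \otimes w_b$, and then apply the formula from \cref{prop:braiding-vstar}, obtaining
\[
\braidR_{V^*, V^*}(w_a \otimes w_b) = q^{\delta_{a b} - \frac{1}{N}} w_b \otimes w_a + \theta(a - b) q^{-\frac{1}{N}}(q - q^{-1}) w_a \otimes w_b.
\]

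Comparing the two, the equality is immediate once one observes that $i \mapsto N + 1 - i$ is an order-reversing involution of $\{1, \dots, N\}$: indeed $\delta_{a b} = \delta_{N + 1 - i,\, N + 1 - j} = \delta_{i j}$ and $\theta(a - b) = \theta\big((N + 1 - i) - (N + 1 - j)\big) = \theta(j - i)$, while $w_b \otimes w_a = w_{N + 1 - j} \otimes w_{N + 1 - i}$ and $w_a \otimes w_b = w_{N + 1 - i} \otimes w_{N + 1 - j}$. Thus the two displays coincide term by term, which proves the lemma. There is essentially no obstacle here; the whole content is the remark that the formulas for $\braidR_{V, V}$ and $\braidR_{V^*, V^*}$ are identical after relabelling the basis by this order reversal, which is precisely what the Heaviside terms $\theta(j-i)$ versus $\theta(i-j)$ in the two propositions record. (One could alternatively phrase this conceptually: $\psi$ is the basis relabelling induced by the Dynkin diagram automorphism $i \mapsto N - i$ of $\mathfrak{sl}_N$, under which $V(\omega_1)$ is twisted into $V(\omega_{N-1}) \cong V^*$ and the universal $R$-matrix is preserved; but the elementary computation above is quicker and self-contained.)
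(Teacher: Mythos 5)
Your proof is correct and follows essentially the same route as the paper: a direct check on the basis $v_i \otimes v_j$, using that the relabelling $(i,j) \mapsto (N+1-i, N+1-j)$ fixes $\delta_{ij}$ and exchanges $\theta(j-i)$ with $\theta(i-j)$ in the two braiding formulas. No issues.
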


\begin{proof}
It is clear that it is an isomorphism, so we only have to check that it intertwines the braidings.
We apply $\braidR_{V^*, V^*}$ to $(\psi \otimes \psi) (v_i \otimes v_j) = w_{N + 1 - i} \otimes w_{N + 1 - j}$. Under the replacement $(i, j) \to (N + 1 - i, N + 1 - j)$ we have $\delta_{i j} \to \delta_{i j}$ and $\theta(i - j) \to \theta(j - i)$. Using these identities in the expression given in \cref{prop:braiding-vstar} we obtain
\begin{align*}
\braidR_{V^*, V^*} (\psi \otimes \psi) (v_i \otimes v_j) & = q^{\delta_{i j} - \frac{1}{N}} \psi(v_j) \otimes \psi(v_i) +  \theta(j - i) q^{- \frac{1}{N}} (q - q^{-1}) \psi(v_i) \otimes \psi(v_j) \\
& = (\psi \otimes \psi) \braidR_{V, V} (v_i \otimes v_j). \qedhere
\end{align*}
\end{proof}

\begin{remark}
This isomorphism does not respect the action of $\Uqsl$, since the simple modules $V = V(\omega_1)$ and $V^* \cong V(\omega_{N - 1})$ are not isomorphic.
\end{remark}

It is well known that $\braidR_{V, V}$ and $\braidR_{V^*, V^*}$ satisfy Hecke-type relations.

\begin{proposition}
\label{prop:hecke-relations}
The braidings $\braidR_{V, V}$ and $\braidR_{V^*, V^*}$ satisfy the Hecke-type relations
\[
(\braidR_{V, V} - q^{1 - \frac{1}{N}}) (\braidR_{V, V} + q^{-1 - \frac{1}{N}}) = 0, \quad
(\braidR_{V^*, V^*} - q^{1 - \frac{1}{N}}) (\braidR_{V^*, V^*} + q^{-1 - \frac{1}{N}}) = 0.
\]
\end{proposition}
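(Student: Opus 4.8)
The plan is to verify both quadratic identities by a short direct computation on a convenient basis, using the explicit description of $\braidR_{V, V}$ obtained above. Since $\braidR_{V, V}$ is a linear endomorphism of $V \otimes V$, it is enough to check that the operator $(\braidR_{V, V} - q^{1 - \frac{1}{N}})(\braidR_{V, V} + q^{-1 - \frac{1}{N}})$ vanishes on the basis consisting of the vectors $v_i \otimes v_i$ together with, for each $i < j$, the two vectors $v_i \otimes v_j$ and $v_j \otimes v_i$.

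For the diagonal vectors I would use that $\theta(0) = 0$, so the formula gives $\braidR_{V, V}(v_i \otimes v_i) = q^{1 - \frac{1}{N}} v_i \otimes v_i$; hence the factor $\braidR_{V, V} - q^{1 - \frac{1}{N}}$ already kills these. For fixed $i < j$, the formula shows that the plane $W_{i j} := \mathbb{C}\, v_i \otimes v_j \oplus \mathbb{C}\, v_j \otimes v_i$ is invariant, with $\braidR_{V, V}(v_i \otimes v_j) = q^{-\frac{1}{N}}(q - q^{-1}) v_i \otimes v_j + q^{-\frac{1}{N}} v_j \otimes v_i$ and $\braidR_{V, V}(v_j \otimes v_i) = q^{-\frac{1}{N}} v_i \otimes v_j$. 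So in the ordered basis $(v_i \otimes v_j, v_j \otimes v_i)$ the restriction of $\braidR_{V, V}$ to $W_{i j}$ is the matrix $q^{-\frac{1}{N}} \left(\begin{smallmatrix} q - q^{-1} & 1 \\ 1 & 0 \end{smallmatrix}\right)$, which has trace $q^{-\frac{1}{N}}(q - q^{-1})$ and determinant $-q^{-\frac{2}{N}}$, hence characteristic polynomial $(t - q^{1 - \frac{1}{N}})(t + q^{-1 - \frac{1}{N}})$. By Cayley--Hamilton this is exactly the statement that the Hecke polynomial annihilates $\braidR_{V, V}|_{W_{i j}}$. Since the $v_i \otimes v_i$ and the planes $W_{i j}$ span $V \otimes V$, the first relation follows.

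For the relation for $\braidR_{V^*, V^*}$ I would not repeat the computation but instead use \cref{lem:iso-v-vstar}: the vector space isomorphism $\psi : V \to V^*$ satisfies $\braidR_{V^*, V^*}(\psi \otimes \psi) = (\psi \otimes \psi) \braidR_{V, V}$, so $\braidR_{V^*, V^*} = (\psi \otimes \psi)\, \braidR_{V, V}\, (\psi \otimes \psi)^{-1}$ is conjugate to $\braidR_{V, V}$ and therefore satisfies the same polynomial identity. (One could equally well rerun the block argument starting from \cref{prop:braiding-vstar}, which differs from the $V \otimes V$ case only by interchanging the roles of the two off-diagonal cases.) I do not expect any genuine obstacle: the only points needing a little care are checking the invariance of the planes $W_{i j}$ and correctly factoring the $2 \times 2$ characteristic polynomial. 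A slightly more conceptual variant would be to note that $\braidR_{V, V}$ is a $\Uqsl$-module map and $V \otimes V \cong V(2\omega_1) \oplus V(\omega_2)$ is a sum of two non-isomorphic simple modules, so by Schur's lemma $\braidR_{V, V}$ has at most two eigenvalues, which one then identifies on $v_1 \otimes v_1$ and on a suitable combination of $v_1 \otimes v_2$ and $v_2 \otimes v_1$; but the computation above is self-contained and equally short.
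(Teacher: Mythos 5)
Your proof is correct and follows essentially the same route as the paper: a direct verification on the weight basis of $V \otimes V$, noting that the diagonal vectors $v_i \otimes v_i$ and the invariant planes spanned by $v_i \otimes v_j$, $v_j \otimes v_i$ ($i<j$) exhaust $V\otimes V$, followed by transporting the result to $V^*$ via the isomorphism $\psi$ of \cref{lem:iso-v-vstar}. The only cosmetic difference is that the paper exhibits explicit eigenvectors for the two eigenvalues, whereas you read off the Hecke polynomial as the characteristic polynomial of the $2\times 2$ blocks and invoke Cayley--Hamilton; both computations are equivalent and equally short.
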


\begin{proof}
It is immediate to check that $v_i \otimes v_j + q^{\delta_{i j} - 1} v_j \otimes v_i$ with $i \leq j$ and $v_i \otimes v_j - q v_j \otimes v_i$ with $i < j$ are eigenvectors with eigenvalues $q^{1 - \frac{1}{N}}$ and $- q^{- 1 - \frac{1}{N}}$, respectively. The case of $V^*$ follows by applying the map $\psi$ from \cref{lem:iso-v-vstar}, for example.
\end{proof}

Using these relations we can provide an equivalent description of the relations of the braided exterior algebras $\Lambda_q(V)$ and $\Lambda_q(V^*)$, which are given by $S_q^2 V$ and $S_q^2 V^*$ as in equation \eqref{eq:s2-braided}.

\begin{corollary}
\label{cor:image-hecke}
We have the identities
\[
S_q^2 V = \mathrm{im} (\id + q^{1 + \frac{1}{N}} \braidR_{V, V}), \quad
S_q^2 V^* = \mathrm{im} (\id + q^{1 + \frac{1}{N}} \braidR_{V^*, V^*}).
\]
\end{corollary}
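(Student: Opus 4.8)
The plan is to read off the spectral decomposition of $\braidR_{V, V}$ (resp.\ $\braidR_{V^*, V^*}$) from \cref{prop:hecke-relations} and then simply evaluate the operator $\id + q^{1 + \frac{1}{N}} \braidR_{V, V}$ on each eigenspace. The Hecke relation says $\braidR_{V, V}$ is annihilated by the polynomial $(t - q^{1 - \frac{1}{N}})(t + q^{-1 - \frac{1}{N}})$, whose two roots $q^{1 - \frac{1}{N}}$ and $-q^{-1 - \frac{1}{N}}$ are distinct since their ratio is $-q^2$ and $q$ is not a root of unity. Hence $\braidR_{V, V}$ is diagonalizable and $V \otimes V = E_+ \oplus E_-$, where $E_+ := \ker(\braidR_{V, V} - q^{1 - \frac{1}{N}}\,\id)$ and $E_- := \ker(\braidR_{V, V} + q^{-1 - \frac{1}{N}}\,\id)$ (both nonzero, by the explicit eigenvectors in the proof of \cref{prop:hecke-relations}, although we do not need this). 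The first point to record is that among the eigenvalues appearing in the definition \eqref{eq:s2-braided} of $S_q^2 V$ only $q^{1 - \frac{1}{N}}$ is positive, so that $S_q^2 V = E_+$.

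Next I would compute $T := \id + q^{1 + \frac{1}{N}} \braidR_{V, V}$ on the two summands. On $E_+$ it acts by the scalar $1 + q^{1 + \frac{1}{N}} \cdot q^{1 - \frac{1}{N}} = 1 + q^2$, which is invertible because $q$ is not a root of unity; on $E_-$ it acts by $1 + q^{1 + \frac{1}{N}} \cdot (-q^{-1 - \frac{1}{N}}) = 0$. Therefore $\mathrm{im}(T) = T(E_+) + T(E_-) = E_+ = S_q^2 V$, which is the first identity. For the second identity I would run the same argument verbatim using the Hecke relation for $\braidR_{V^*, V^*}$ in \cref{prop:hecke-relations}; alternatively, one transports the first identity along the isomorphism $\psi$ of \cref{lem:iso-v-vstar}, since $\psi \otimes \psi$ conjugates $\braidR_{V, V}$ into $\braidR_{V^*, V^*}$ and hence carries $S_q^2 V$ onto $S_q^2 V^*$ and $\mathrm{im}(\id + q^{1 + \frac{1}{N}} \braidR_{V, V})$ onto $\mathrm{im}(\id + q^{1 + \frac{1}{N}} \braidR_{V^*, V^*})$.

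There is no genuinely hard step here; the proof is a one-line spectral computation. The only things demanding a little care are the bookkeeping of signs — confirming that it is $q^{1 - \frac{1}{N}}$, not $-q^{-1 - \frac{1}{N}}$, that is the ``symmetric'' eigenvalue entering $S_q^2 V$, which is fixed by the classical limit $q \to 1$ where $q^{1 - \frac{1}{N}} \to 1$ and $\braidR_{V, V}$ degenerates to the flip with symmetric tensors as its $+1$-eigenspace — and checking that the scalar $1 + q^2$ by which $T$ acts on $E_+$ is nonzero, which is precisely where the hypothesis on $q$ is used.
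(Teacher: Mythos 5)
Your argument is correct and is essentially the paper's own proof: both identify $S_q^2V$ with the $q^{1-\frac{1}{N}}$-eigenspace (the only positive eigenvalue) and then use the Hecke relation to see that this eigenspace equals $\mathrm{im}(\id + q^{1+\frac{1}{N}}\braidR_{V,V})$, the paper phrasing this as $\ker(\braidR_{V,V}-q^{1-\frac{1}{N}}\id)=\mathrm{im}(\braidR_{V,V}+q^{-1-\frac{1}{N}}\id)$ while you make the underlying spectral decomposition explicit. The $V^*$ case is handled the same way in both.
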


\begin{proof}
Since $q^{1 - \frac{1}{N}}$ is the only positive eigenvalue of $\braidR_{V, V}$, we have $S_q^2 V = \ker(\braidR_{V, V} - q^{1 - \frac{1}{N}} \id)$. It follows from the quadratic relations of \cref{prop:hecke-relations} that
\[
\ker(\braidR_{V, V} - q^{1 - \frac{1}{N}} \id) = \mathrm{im} (\braidR_{V, V} + q^{-1 - \frac{1}{N}} \id).
\]
This can be rewritten as $S_q^2 V = \mathrm{im} (\id + q^{1 + \frac{1}{N}} \braidR_{V, V})$. Similarly for $V^*$.
\end{proof}

\section{Braid equation and symmetrization}
\label{sec:braid-equation}

In this section we will recall some facts related to solutions of the braid equation.
In particular we will introduce symmetrization operators, which will play an important role in checking the PBW-deformation condition later on.

\subsection{Braid equation and rescaling}

Let $V$ be a vector space and $\sigma : V \otimes V \to V \otimes V$ be a linear isomorphism. Write $\sigma_1 := \sigma \otimes \id$ and $\sigma_2 := \id \otimes \sigma$. Then we say that $\sigma$ satisfies the \emph{braid equation} if the equation $\sigma_1 \sigma_2 \sigma_1 = \sigma_2 \sigma_1 \sigma_2$ holds in $V^{\otimes 3}$. The pair $(V, \sigma)$ is called a braided vector space. The braid equation (or equivalently of the Yang-Baxter equation) plays an important role in the theory of Hopf algebras, see for example \cite[Section 8.1]{klsc}.

For reasons that will become clear later on, we will be interested in the following situation: given a solution of the braid equation on a direct sum $V = \bigoplus_{i \in I}V_i$, we want to consider a new map which is obtained from the given solution by rescaling its components by some constants. It is very easy to prove, as we will do below, that this rescaled map is again a solution of the braid equation.
Given any linear map $T : V \otimes V\to V \otimes V$ we will write $T = \sum_{i, j \in I} T_{i j}$ for its components, that is $T_{i j} : V_i \otimes V_j \to V_j \otimes V_i$.

\begin{proposition}
A map $T:V \otimes V \to V \otimes V$ satisfies the braid equation if and only if
\[
(T_{j k} \otimes \id) (\id \otimes T_{i k})(T_{i j} \otimes\id) = (\id \otimes T_{i j}) (T_{i k} \otimes\id) (\id \otimes T_{j k}), \quad \forall i, j, k \in I.
\]
\end{proposition}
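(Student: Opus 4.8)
The statement is an "if and only if" relating the braid equation for $T$ on $V^{\otimes 3}$ to a family of "componentwise" braid equations indexed by triples $(i,j,k) \in I^3$. The natural approach is to decompose $V^{\otimes 3} = \bigoplus_{i,j,k} V_i \otimes V_j \otimes V_k$ and observe that both sides of the braid equation $T_1 T_2 T_1 = T_2 T_1 T_2$ are block-triangular with respect to this decomposition in a way that forces a term-by-term comparison. Concretely, I would first record how the elementary operators act on a homogeneous component: $T_1 = T \otimes \id$ sends $V_i \otimes V_j \otimes V_k$ into $\bigoplus_{i',j'} V_{i'} \otimes V_{j'} \otimes V_k$ via $T_{ij} \otimes \id$ into the $(j,i,k)$-summand (so only the first two labels get permuted, and only by the relevant component $T_{ij}$), and similarly $T_2 = \id \otimes T$ acts by $\id \otimes T_{jk}$ on $V_i \otimes V_j \otimes V_k$ landing in $V_i \otimes V_k \otimes V_j$.

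**Key steps.** First I would fix $i,j,k \in I$ and track the image of a generic element of $V_i \otimes V_j \otimes V_k$ under each of the two triple compositions, using the action rules above. On the left side, $T_1$ takes $V_i \otimes V_j \otimes V_k \to V_j \otimes V_i \otimes V_k$ via $T_{ij} \otimes \id$; then $T_2$ takes this to $V_j \otimes V_k \otimes V_i$ via $\id \otimes T_{ik}$; then $T_1$ takes this to $V_k \otimes V_j \otimes V_i$ via $T_{jk} \otimes \id$. So the composite $T_1 T_2 T_1$ restricted to $V_i \otimes V_j \otimes V_k$ equals $(T_{jk} \otimes \id)(\id \otimes T_{ik})(T_{ij} \otimes \id)$ and lands entirely in the single summand $V_k \otimes V_j \otimes V_i$. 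On the right side, $T_2 T_1 T_2$ restricted to $V_i \otimes V_j \otimes V_k$ first applies $\id \otimes T_{jk}$ into $V_i \otimes V_k \otimes V_j$, then $T_{ik} \otimes \id$ into $V_k \otimes V_i \otimes V_j$, then $\id \otimes T_{ij}$ into $V_k \otimes V_j \otimes V_i$; so it equals $(\id \otimes T_{ij})(T_{ik} \otimes \id)(\id \otimes T_{jk})$, again landing in $V_k \otimes V_j \otimes V_i$. Since both composites map the $(i,j,k)$-summand into the same $(k,j,i)$-summand and the summands are independent, the global equation $T_1 T_2 T_1 = T_2 T_1 T_2$ holds if and only if these two maps agree for every triple $(i,j,k)$, which is exactly the claimed identity.

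**Main obstacle.** There is no serious obstacle here; the content is entirely bookkeeping of indices, and the only thing to be careful about is getting the order of the component labels right in each of the three applications (in particular that $T_{ik}$, not $T_{ki}$, is the relevant component in the middle of the left-hand composite, because after the first $T_1$ the first two slots hold $V_j \otimes V_i$ but the $\id \otimes T$ acts on slots two and three which hold $V_i \otimes V_k$). I would present the two block computations in a short aligned display and then conclude by the direct-sum decomposition argument. One remark worth including: the hypothesis that $T$ is a linear isomorphism is not actually needed for this equivalence — the statement holds for any linear $T$ written in block form — but since the ambient setup assumes $\sigma$ is an isomorphism I would phrase it consistently with that and simply note the block decomposition respects nothing beyond the grading.
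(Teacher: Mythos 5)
Your proposal is correct and follows essentially the same route as the paper: both track how $T_1T_2T_1$ and $T_2T_1T_2$ carry the summand $V_i\otimes V_j\otimes V_k$ through the intermediate summands via the component maps, and then compare the two composites summand by summand. Your explicit remark that both composites land in the same summand $V_k\otimes V_j\otimes V_i$, so that the global identity is equivalent to the family of componentwise identities, is the (implicit) justification the paper also relies on.
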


\begin{proof}
Consider $T_1 T_2 T_1$. Then $V_i \otimes V_j \otimes V_k$ is mapped into
\[
V_i \otimes V_j \otimes V_k \xrightarrow{T_{i j} \otimes \id}  V_j \otimes V_i \otimes V_k \xrightarrow{\id \otimes T_{i k}}  V_j \otimes V_k \otimes V_i \xrightarrow{T_{j k} \otimes \id} V_k \otimes V_j \otimes V_i.
\]
Similarly consider $T_2 T_1 T_2$. Then $V_i \otimes V_j \otimes V_k$ is mapped into
\[
V_i \otimes V_j \otimes V_k \xrightarrow{\id \otimes T_{j k}} V_i \otimes V_k \otimes V_j \xrightarrow{T_{i k} \otimes \id} V_k \otimes V_i \otimes V_j \xrightarrow{\id \otimes T_{i j}} V_k \otimes V_j \otimes V_i.
\]
Comparing these two we obtain the stated conditions.
\end{proof}

\begin{corollary}
Suppose $T = \sum_{i, j \in I} T_{i j}$ satisfies the braid equation. Then $T^\prime = \sum_{i, j \in I} \lambda_{i j} T_{i j}$ satisfies the braid equation for any choice of scalars $\{\lambda_{i j}\}_{i, j \in I}$.
\end{corollary}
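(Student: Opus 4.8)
The plan is to reduce everything to the componentwise reformulation of the braid equation established in the previous proposition. Write $T^\prime_{ij} = \lambda_{ij} T_{ij}$ for the components of $T^\prime$; this is immediate from $T^\prime = \sum_{i,j \in I} \lambda_{ij} T_{ij}$ together with the fact that $T_{ij}$ already maps $V_i \otimes V_j$ into $V_j \otimes V_i$. By the previous proposition applied to $T^\prime$, it suffices to verify that
\[
(T^\prime_{jk} \otimes \id)(\id \otimes T^\prime_{ik})(T^\prime_{ij} \otimes \id) = (\id \otimes T^\prime_{ij})(T^\prime_{ik} \otimes \id)(\id \otimes T^\prime_{jk})
\]
for all $i, j, k \in I$.

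The key observation is that the scalars pull out of each triple product and combine into the same monomial on both sides. Indeed, extracting the constants from the left-hand side gives $\lambda_{ij}\lambda_{ik}\lambda_{jk}$ times $(T_{jk} \otimes \id)(\id \otimes T_{ik})(T_{ij} \otimes \id)$, and extracting them from the right-hand side gives $\lambda_{ij}\lambda_{ik}\lambda_{jk}$ times $(\id \otimes T_{ij})(T_{ik} \otimes \id)(\id \otimes T_{jk})$. Since $T$ satisfies the braid equation, the previous proposition applied to $T$ tells us these two unscaled triple products agree for every $i, j, k$. Multiplying both by the common scalar $\lambda_{ij}\lambda_{ik}\lambda_{jk}$ preserves the equality, so the componentwise braid equation holds for $T^\prime$, and hence $T^\prime$ satisfies the braid equation.

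There is essentially no obstacle here once the componentwise criterion is in place. The only point worth noticing is that in each of the two triple products the three distinct components $T_{ij}$, $T_{ik}$, $T_{jk}$ each appear exactly once, so the scalar prefactor is the commutative product $\lambda_{ij}\lambda_{ik}\lambda_{jk}$ on both sides and plays no role in the identity. (One could also argue directly without the criterion: $T^\prime_1 T^\prime_2 T^\prime_1$ and $T^\prime_2 T^\prime_1 T^\prime_2$ both send the summand $V_i \otimes V_j \otimes V_k$ into $V_k \otimes V_j \otimes V_i$, and on that summand each equals the corresponding map for $T$ scaled by $\lambda_{ij}\lambda_{ik}\lambda_{jk}$; but invoking the preceding proposition is the cleaner route.)
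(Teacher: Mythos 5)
Your proof is correct and follows the same route as the paper: apply the componentwise reformulation from the preceding proposition, observe that each side of the triple-product identity for $T'$ acquires the same scalar prefactor $\lambda_{ij}\lambda_{ik}\lambda_{jk}$, and conclude from the braid equation for $T$. No issues.
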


\begin{proof}
By the previous result, $T^\prime$ should satisfy the conditions
\[
\lambda_{j k} \lambda_{i k} \lambda_{i j} (T_{j k} \otimes \id) (\id \otimes T_{i k}) (T_{i j} \otimes \id) = \lambda_{i j} \lambda_{i k} \lambda_{j k} (\id \otimes T_{i j}) (T_{i k} \otimes\id) (\id \otimes T_{j k}).
\]
These are satisfied, since the prefactor is the same and $T$ satisfies the braid equation.
\end{proof}

\subsection{Symmetrization}

Now suppose $\sigma : V \otimes V \to V \otimes V$ satisfies the braid equation. Using $\sigma$ we can define an analogue of the symmetrization operator, similarly to the classical case with the flip map. In degree two and three this is given by
\[
\symm{2} := \id + \sigma, \quad
\symm{3} := \id + \sigma_1 + \sigma_2 + \sigma_1 \sigma_2 + \sigma_2 \sigma_1 + \sigma_1 \sigma_2 \sigma_1.
\]
Of course the fact that $\sigma$ satisfies the braid equation plays no role in the above definition. However, this property is needed to prove the following.

\begin{lemma}
\label{lem:symm-intersection}
Suppose $\sigma$ satisfies the braid equation. Then $\mathrm{im} \symm{3} \subset (\mathrm{im} \symm{2} \otimes V) \cap (V \otimes \mathrm{im} \symm{2})$.
\end{lemma}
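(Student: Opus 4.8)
The plan is to show the containment $\mathrm{im}\,\symm{3} \subset (\mathrm{im}\,\symm{2} \otimes V)$, and then obtain the other containment $\mathrm{im}\,\symm{3} \subset (V \otimes \mathrm{im}\,\symm{2})$ by a symmetric argument. For the first containment, the key observation is that $\symm{3}$ factors through $\symm{2} \otimes \id$ on the left: explicitly, I would try to write $\symm{3} = (\symm{2} \otimes \id)\, T$ for some linear map $T : V^{\otimes 3} \to V^{\otimes 3}$. Grouping the six terms of $\symm{3}$ in pairs according to whether they end in a factor that can be pulled out of $\id + \sigma_1 = \symm{2} \otimes \id$, one computes
\[
\symm{3} = (\id + \sigma_1)(\id + \sigma_2 + \sigma_2 \sigma_1) = (\symm{2} \otimes \id)(\id + \sigma_2 + \sigma_2 \sigma_1),
\]
where the middle equality is a direct expansion: $\sigma_1 \cdot \id = \sigma_1$, $\sigma_1 \sigma_2$, and $\sigma_1 \sigma_2 \sigma_1$, which together with $\id$, $\sigma_2$, $\sigma_2 \sigma_1$ reproduce all six terms. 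Since $\mathrm{im}(\symm{2} \otimes \id) = \mathrm{im}\,\symm{2} \otimes V$, this immediately gives $\mathrm{im}\,\symm{3} \subset \mathrm{im}\,\symm{2} \otimes V$, and notably this half of the argument does not use the braid equation at all.

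For the second containment I would similarly factor $\symm{3}$ on the right through $\id \otimes \symm{2} = \id + \sigma_2$. The natural attempt is
\[
\symm{3} = (\id + \sigma_1 + \sigma_1 \sigma_2)(\id + \sigma_2) = (\id + \sigma_1 + \sigma_1 \sigma_2)(\id \otimes \symm{2}),
\]
and expanding the left factor against $(\id + \sigma_2)$ produces $\id$, $\sigma_2$, $\sigma_1$, $\sigma_1 \sigma_2$, $\sigma_1 \sigma_2$, $\sigma_1 \sigma_2 \sigma_2$ — which is \emph{not} $\symm{3}$ on the nose. This is where the braid equation enters: one of the terms must be rewritten using $\sigma_1 \sigma_2 \sigma_1 = \sigma_2 \sigma_1 \sigma_2$ (or a consequence of it) to convert the erroneous terms into $\sigma_2 \sigma_1$ and $\sigma_1 \sigma_2 \sigma_1$. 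So the correct right-factorization will be something like $\symm{3} = (\id + \sigma_1 + \sigma_2 \sigma_1)(\id \otimes \symm{2})$ — expanding this gives $\id, \sigma_2, \sigma_1, \sigma_1\sigma_2, \sigma_2\sigma_1, \sigma_2\sigma_1\sigma_2$, and the last term equals $\sigma_1\sigma_2\sigma_1$ precisely by the braid equation, so all six terms of $\symm{3}$ are recovered. Then $\mathrm{im}\,\symm{3} \subset V \otimes \mathrm{im}\,\symm{2}$ follows from $\mathrm{im}(\id \otimes \symm{2}) = V \otimes \mathrm{im}\,\symm{2}$.

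Combining the two containments yields $\mathrm{im}\,\symm{3} \subset (\mathrm{im}\,\symm{2} \otimes V) \cap (V \otimes \mathrm{im}\,\symm{2})$, as desired. The main obstacle — really the only nontrivial point — is identifying the correct grouping of the six terms of $\symm{3}$ so that the right factorization collapses to $\id \otimes \symm{2}$: one has to choose the decomposition in which the single application of the braid equation exactly repairs the expansion. Once the right ansatz $(\id + \sigma_1 + \sigma_2\sigma_1)$ is found, verifying it is a routine expansion plus one substitution. I would present the proof as the two displayed factorizations followed by the remark that the first needs no hypothesis while the second uses $\sigma_1\sigma_2\sigma_1 = \sigma_2\sigma_1\sigma_2$.
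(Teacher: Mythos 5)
Your first containment is exactly the paper's argument: the identity $\symm{3} = (\id + \sigma_1)(\id + \sigma_2 + \sigma_2\sigma_1)$ holds on the nose, so $\mathrm{im}\,\symm{3} \subset \mathrm{im}(\id + \sigma_1) = \mathrm{im}\,\symm{2} \otimes V$, and you correctly note that no braid equation is needed here.

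The second containment, however, has a sidedness error that breaks the deduction as written. Your corrected factorization $\symm{3} = (\id + \sigma_1 + \sigma_2\sigma_1)(\id + \sigma_2)$ is a true operator identity (using $\sigma_2\sigma_1\sigma_2 = \sigma_1\sigma_2\sigma_1$), but it places $\id \otimes \symm{2} = \id + \sigma_2$ as the \emph{inner}, first-applied factor. For a composite $AB$ one only has $\mathrm{im}(AB) \subseteq \mathrm{im}(A)$, not $\mathrm{im}(AB) \subseteq \mathrm{im}(B)$; so from your identity you may conclude $\mathrm{im}\,\symm{3} \subset \mathrm{im}(\id + \sigma_1 + \sigma_2\sigma_1)$, which is not the claimed containment. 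To land in $V \otimes \mathrm{im}\,\symm{2}$ you need $\id + \sigma_2$ as the \emph{outer} factor, i.e.
\[
\symm{3} = (\id + \sigma_2)(\id + \sigma_1 + \sigma_1\sigma_2),
\]
whose expansion ends in $\sigma_2\sigma_1\sigma_2$, equal to $\sigma_1\sigma_2\sigma_1$ by the braid equation. This is the paper's second regrouping, and it is precisely your discarded ``natural attempt'' with the two factors written in the opposite order. With that one-line correction your proof coincides with the paper's.
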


\begin{proof}
To show that $\mathrm{im} \symm{3} \subset \mathrm{im} \symm{2} \otimes V$ we observe that
\begin{equation}
\label{eq:sym-first}
\symm{3} = (\id + \sigma_1) + (\id + \sigma_1) \sigma_2 + (\id + \sigma_1) \sigma_2 \sigma_1.
\end{equation}
Similarly to show that $\mathrm{im} \symm{3} \subset V \otimes \mathrm{im} \symm{2}$ we observe that
\begin{equation}
\label{eq:sym-second}
\symm{3} = (\id + \sigma_2) + (\id + \sigma_2) \sigma_1 + (\id + \sigma_2) \sigma_1 \sigma_2,
\end{equation}
where we have used the braid equation $\sigma_1 \sigma_2 \sigma_1 = \sigma_2 \sigma_1 \sigma_2$.
\end{proof}

Using the symmetrization map we can produce, in some cases, a convenient basis for checking the PBW-deformation condition. Indeed, this needs to be checked on the intersection $(R \otimes V) \cap (V \otimes R)$, where $R$ is the space of quadratic relations of the given quadratic algebra.

\section{Case simple modules}
\label{sec:simple-module}

In this section we will study PBW-deformations of quadratic-constant type for the braided exterior algebras $\Lambda_q(V(\omega_1))$ and $\Lambda_q(V(\omega_{N - 1}))$. We will show that they \emph{do not admit} PBW-deformations of this type. Hence there are no ``quantum Clifford algebras'' such that their associated graded algebras coincide with these braided exterior algebras.

Recall that quadratic-constant deformations are characterized by maps $\beta : R \to \mathbb{K}$, where $R \subset V \otimes V$ is the subspace of quadratic relations, such that $\beta \otimes \id - \id \otimes \beta = 0$ on the intersection $(R \otimes V) \cap (V \otimes R)$ inside $V^{\otimes 3}$.
In order to check this condition we will consider a convenient basis for this subspace, obtained via symmetrization.

In this section $\sigma$ will denote the maps $q^{1 + \frac{1}{N}} \braidR_{V, V}$ for $V(\omega_1)$ and $q^{1 + \frac{1}{N}} \braidR_{V^*, V^*}$ for $V(\omega_{N - 1})$. Then $\sigma$ satisfies the braid equation, as it is a given by a rescaling of the braiding.

\begin{notation}
\label{not:VandW}
We define $\mathcal{V}_{i j} := \symm{2}(v_i \otimes v_j)$ for $i \geq j$ and $\mathcal{W}_{i j} := \symm{2}(w_i \otimes w_j)$ for $i \leq j$.
\end{notation}

It follows from \cref{cor:image-hecke} that $\{ \mathcal{V}_{i j} \}_{i \geq j}$ and $\{ \mathcal{W}_{i j} \}_{i \leq j}$ are bases for the subspaces of quadratic relations $R_V$ and $R_{V^*}$, respectively. They are given explicitly by
\[
\mathcal{V}_{i j} = v_i \otimes v_j + q^{\delta_{i j} + 1} v_j \otimes v_i, \quad
\mathcal{W}_{i j} = w_i \otimes w_j + q^{\delta_{i j} + 1} w_j \otimes w_i.
\]
We introduce similar elements by symmetrization in degree three.

\begin{notation}
\label{not:VandW3}
We define $\mathcal{V}_{i j k} := \symm{3}(v_i \otimes v_j \otimes v_k)$ for $i \geq j \geq k$. Similarly we define $\mathcal{W}_{i j k} := \symm{3}(w_i \otimes w_j \otimes w_k)$ for $i \leq j \leq k$.
\end{notation}

In the next lemma we will show that
\[
\mathcal{V}_{i j k} \in (R_V \otimes V) \cap (V \otimes R_V), \quad
\mathcal{W}_{i j k} \in (R_{V^*} \otimes V^*) \cap (V^* \otimes R_{V^*}),
\]
and it will be clear that these elements are linearly independent. Then it follows that they are bases for these subspaces of $V^{\otimes 3}$ and $(V^*)^{\otimes 3}$. Indeed the algebras $\Lambda_q(V(\omega_1))$ and $\Lambda_q(V(\omega_{N - 1}))$ are flat, namely have the same Hilbert series as the corresponding classical exterior algebras, hence by dimensional reasons the elements above give a basis.

\begin{lemma}
\label{lem:vw-identities}
We have the identities
\begin{align*}
\mathcal{V}_{i j k} & = \mathcal{V}_{i j} \otimes v_k + q^{\delta_{j k} + 1} \mathcal{V}_{i k} \otimes v_j + q^{\delta_{i j} + \delta_{i k} + 2} \mathcal{V}_{j k} \otimes v_i \\
& = v_i \otimes \mathcal{V}_{j k} + q^{\delta_{i j} + 1} v_j \otimes \mathcal{V}_{i k} + q^{\delta_{j k} + \delta_{i k} + 2} v_k \otimes \mathcal{V}_{i j}.
\end{align*}
Similarly we have the identities
\begin{align*}
\mathcal{W}_{i j k} & = \mathcal{W}_{i j} \otimes w_k + q^{\delta_{j k} + 1} \mathcal{W}_{i k} \otimes w_j + q^{\delta_{i j} + \delta_{i k} + 2} \mathcal{W}_{j k} \otimes w_i \\
& = w_i \otimes \mathcal{W}_{j k} + q^{\delta_{i j} + 1} w_j \otimes \mathcal{W}_{i k} + q^{\delta_{j k} + \delta_{i k} + 2} w_k \otimes \mathcal{W}_{i j}.
\end{align*}
\end{lemma}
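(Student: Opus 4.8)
The plan is to deduce both identities from the two factorizations of $\symm{3}$ already recorded in the proof of \cref{lem:symm-intersection}. Rewriting \eqref{eq:sym-first} as $(\id + \sigma_1)(\id + \sigma_2 + \sigma_2\sigma_1)$ and using $\id + \sigma_1 = \symm{2}\otimes\id$ gives $\symm{3} = (\symm{2}\otimes\id)(\id + \sigma_2 + \sigma_2\sigma_1)$, and similarly \eqref{eq:sym-second} gives $\symm{3} = (\id\otimes\symm{2})(\id + \sigma_1 + \sigma_1\sigma_2)$. The first factorization will produce the identity with the $\mathcal{V}_{ab}\otimes v_c$ terms, the second the one with the $v_a\otimes\mathcal{V}_{bc}$ terms.

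The only computational input needed is the elementary observation that, for the rescaled braiding $\sigma = q^{1+\frac1N}\braidR_{V,V}$, one has $\sigma(v_a\otimes v_b) = q^{\delta_{ab}+1}\, v_b\otimes v_a$ whenever $a \geq b$: indeed the Heaviside correction term $\theta(b-a)q^{-\frac1N}(q-q^{-1})v_a\otimes v_b$ in \cref{prop:braiding-vstar} vanishes precisely on descending (or equal) pairs. First I would record this fact. Then, starting from $v_i\otimes v_j\otimes v_k$ with $i\geq j\geq k$, I would compute $(\id + \sigma_2 + \sigma_2\sigma_1)(v_i\otimes v_j\otimes v_k)$ step by step; the crucial point is that $i\geq j\geq k$ ensures that at every stage $\sigma$ is applied to a pair that is again in descending order ($v_j\otimes v_k$, then $v_i\otimes v_j$, then $v_i\otimes v_k$), so the clean monomial formula applies throughout and one gets
\[
(\id + \sigma_2 + \sigma_2\sigma_1)(v_i\otimes v_j\otimes v_k) = v_i\otimes v_j\otimes v_k + q^{\delta_{jk}+1}\, v_i\otimes v_k\otimes v_j + q^{\delta_{ij}+\delta_{ik}+2}\, v_j\otimes v_k\otimes v_i.
\]
Applying $\symm{2}\otimes\id$ and noting that in all three summands the first two tensor factors are again in descending order — so that $\symm{2}$ yields exactly $\mathcal{V}_{ij}$, $\mathcal{V}_{ik}$, $\mathcal{V}_{jk}$ in the sense of \cref{not:VandW} — gives the first identity. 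The second is obtained identically from $\symm{3} = (\id\otimes\symm{2})(\id + \sigma_1 + \sigma_1\sigma_2)$, computing $(\id + \sigma_1 + \sigma_1\sigma_2)(v_i\otimes v_j\otimes v_k)$ and then applying $\id\otimes\symm{2}$.

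For the $\mathcal{W}$-identities I would transport the $\mathcal{V}$-identities through the isomorphism $\psi$ of \cref{lem:iso-v-vstar}: since $\psi$ intertwines the braidings, $\psi^{\otimes 2}$ and $\psi^{\otimes 3}$ intertwine $\symm{2}$ and $\symm{3}$, so applying $\psi^{\otimes 3}$ to the $\mathcal{V}$-identities and using $\psi(v_i) = w_{N+1-i}$ together with $i\geq j\geq k \iff N{+}1{-}i \leq N{+}1{-}j \leq N{+}1{-}k$ and $\delta_{jk} = \delta_{N+1-j,\,N+1-k}$ yields the $\mathcal{W}$-identities directly (alternatively one repeats the same computation verbatim, the Heaviside term in \cref{prop:braiding-vstar} now vanishing on ascending pairs, which matches the ordering $i\leq j\leq k$). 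I do not expect a genuine obstacle here: the only point requiring attention is verifying that the intermediate pairs stay in the ``good'' order so that no correction terms ever intervene and so that $\symm{2}$ on those pairs returns the $\mathcal{V}_{ab}$ (resp. $\mathcal{W}_{ab}$) as defined — which is exactly where the hypotheses $i\geq j\geq k$ and $i\leq j\leq k$ are used.
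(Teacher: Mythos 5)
Your proof is correct and follows essentially the same route as the paper: both compute the action of $\sigma_1$, $\sigma_2$, $\sigma_2\sigma_1$, $\sigma_1\sigma_2$ on $v_i\otimes v_j\otimes v_k$ using that the Heaviside correction vanishes on descending pairs, plug into the factorizations \eqref{eq:sym-first} and \eqref{eq:sym-second}, and transport the result to $V^*$ via the intertwiner $\psi$ of \cref{lem:iso-v-vstar}. (Only a cosmetic slip: the formula for $\braidR_{V,V}$ is the proposition preceding \cref{prop:braiding-vstar}, not that proposition itself.)
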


\begin{proof}
Using the fact that $i \geq j \geq k$ we compute
\[
\begin{gathered}
\sigma_1 (v_i \otimes v_j \otimes v_k) = q^{\delta_{i j} + 1} v_j \otimes v_i \otimes v_k, \quad
\sigma_2 (v_i \otimes v_j \otimes v_k) = q^{\delta_{j k} + 1} v_i \otimes v_k \otimes v_j, \\
\sigma_2 \sigma_1 (v_i \otimes v_j \otimes v_k) = q^{\delta_{i j} + \delta_{i k} + 2} v_j \otimes v_k \otimes v_i, \quad
\sigma_1 \sigma_2 (v_i \otimes v_j \otimes v_k) = q^{\delta_{j k} + \delta_{i k} + 2} v_k \otimes v_i \otimes v_j.
\end{gathered}
\]
Plugging these into the expressions for $\symm{3}$ given by \eqref{eq:sym-first} and \eqref{eq:sym-second} and using the definition of the elements $\mathcal{V}_{i j}$ we obtain the result.
To obtain the expressions for $\mathcal{W}_{i j k}$ we use the isomorphism $\psi : V \to V^*$ given in \cref{lem:iso-v-vstar}. Since $\psi$ intertwines the braidings we have
\begin{align*}
\mathcal{W}_{i j k} & = \symm{3} (\psi \otimes \psi \otimes \psi) (v_{N + 1 - i} \otimes v_{N + 1 - j} \otimes v_{N + 1 - k}) \\
& = (\psi \otimes \psi \otimes \psi) \mathcal{V}_{N + 1 - i, N + 1 - j, N + 1 - k}.
\end{align*}
Then applying $\psi$ to the expressions for $\mathcal{V}_{i j k}$ we obtain the result.
\end{proof}

We are now ready to study PBW-deformations of these braided exterior algebras.

\begin{theorem}
\label{thm:pbw-simple}
For $0 < q < 1$ there are no non-trivial PBW-deformations of quadratic-constant type of the algebras $\Lambda_q(V)$ and $\Lambda_q(V^*)$.
\end{theorem}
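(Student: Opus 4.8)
The plan is to exploit the basis $\{\mathcal{V}_{ijk}\}_{i \geq j \geq k}$ for $(R_V \otimes V) \cap (V \otimes R_V)$ produced in \cref{lem:vw-identities}, together with \cref{lem:symm-intersection} which guarantees these elements lie in the intersection, and the flatness of $\Lambda_q(V(\omega_1))$ and $\Lambda_q(V(\omega_{N-1}))$ which ensures they span it. By \cref{lem:symm-intersection} and the dimension count already sketched before the theorem, it suffices to impose the quadratic-constant PBW-condition $\beta \otimes \id - \id \otimes \beta = 0$ on each $\mathcal{V}_{ijk}$ and show that the only solution is $\beta = 0$; the $V^*$ case will then follow verbatim by applying the isomorphism $\psi$ from \cref{lem:iso-v-vstar}, since $\psi$ intertwines the braidings and hence carries the $\mathcal{V}$-relations to the $\mathcal{W}$-relations.

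Concretely, I would write $\beta(\mathcal{V}_{ij}) =: b_{ij}$ for $i \geq j$ (the unknowns, indexed by the basis of $R_V$), and feed the two expressions for $\mathcal{V}_{ijk}$ from \cref{lem:vw-identities} into the condition. Applying $\beta \otimes \id$ to the first line and $\id \otimes \beta$ to the second line and subtracting, the requirement becomes, for all $i \geq j \geq k$,
\[
b_{ij}\, v_k + q^{\delta_{jk}+1} b_{ik}\, v_j + q^{\delta_{ij}+\delta_{ik}+2} b_{jk}\, v_i
= q^{\delta_{jk}+\delta_{ik}+2} b_{ij}\, v_k + q^{\delta_{ij}+1} b_{ik}\, v_j + b_{jk}\, v_i.
\]
Since $\{v_i, v_j, v_k\}$ are linearly independent whenever the indices are distinct, matching coefficients gives a homogeneous linear system in the $b_{ij}$. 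The key cases are: $i > j > k$ distinct (yielding $b_{ij}(1 - q^2) = 0$, $b_{ik}(q - q) = 0$ trivially, $b_{jk}(q^2 - 1) = 0$), and the degenerate cases $i = j > k$, $i > j = k$, $i = j = k$, where some $v$'s coincide and the $\delta$-exponents shift. For $0 < q < 1$ we have $1 - q^2 \neq 0$, so the distinct-index case already forces $b_{ij} = 0$ for all $i > j$; the remaining diagonal unknowns $b_{ii}$ are then killed by examining $\mathcal{V}_{iik}$ or $\mathcal{V}_{iii}$, where the $q^{\delta}$-factors produce a nonvanishing coefficient on $b_{ii}$. Hence $\beta = 0$, i.e. no non-trivial deformation.

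The main obstacle I anticipate is bookkeeping the degenerate index cases correctly: when two of $i,j,k$ coincide the Kronecker-delta exponents in \cref{lem:vw-identities} change, and simultaneously two of the basis vectors $v_i,v_j,v_k$ become equal, so one cannot naively match coefficients vector-by-vector. One must carefully collect the $v_i$- and $v_k$-coefficients after the collision and check that the resulting scalar equation still has nonzero leading coefficient on the relevant $b$. It is precisely here that the hypothesis $0 < q < 1$ (equivalently $q$ real, $q \neq \pm 1$, and more specifically that various expressions like $1-q^2$, $q^{\delta+2}-1$ do not vanish) is used; I would isolate the finitely many scalar non-vanishing conditions and note they all hold for $0<q<1$. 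Once $\beta = 0$ on $R_V$, the deformation is trivial by definition, and the $V^*$ statement transfers through $\psi$ without any further computation.
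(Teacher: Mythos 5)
Your proposal is correct and follows essentially the same route as the paper: apply $\beta\otimes\id-\id\otimes\beta$ to the two expansions of $\mathcal{V}_{ijk}$ from \cref{lem:vw-identities}, use three distinct indices to kill $\beta(\mathcal{V}_{ij})$ for $i\neq j$ (which requires $N>2$; the paper handles $N=2$ separately via the degenerate cases, as your framework also allows), then use $\mathcal{V}_{iik}$ to kill the diagonal, and transfer to $V^*$ via $\psi$. The only small slip is the suggestion that $\mathcal{V}_{iii}$ could finish the diagonal case: there the condition is identically $0=0$, so one genuinely needs $\mathcal{V}_{iik}$ with $i>k$ together with the already-established vanishing of the off-diagonal values.
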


\begin{proof}
We will show that $(\beta \otimes \id - \id \otimes \beta) (\mathcal{V}_{i j k}) = 0$ implies $\beta(\mathcal{V}_{i j}) = 0$, namely a trivial deformation.
Applying $\beta \otimes \id - \id \otimes \beta$ to the two expressions given in \cref{lem:vw-identities} we get
\[
\begin{split}
0 & = \beta(\mathcal{V}_{i j}) v_k + q^{\delta_{j k} + 1} \beta(\mathcal{V}_{i k}) v_j + q^{\delta_{i j} + \delta_{i k} + 2} \beta(\mathcal{V}_{j k}) v_i \\
& - \beta(\mathcal{V}_{j k}) v_i - q^{\delta_{i j} + 1} \beta(\mathcal{V}_{i k}) v_j - q^{\delta_{j k} + \delta_{i k} + 2} \beta(\mathcal{V}_{i j}) v_k.
\end{split}
\]
The case $N = 2$ can be checked separately to show that $\beta(\mathcal{V}_{i j}) = 0$. Suppose $N > 2$ so that we can take $i, j, k$ to be all distinct. Then we obtain the condition
\[
(q^2 - 1) \beta(\mathcal{V}_{j k}) v_i + (1 - q^2) \beta(\mathcal{V}_{i j}) v_k = 0.
\]
Since $q^2 \neq 1$ this implies $\beta(\mathcal{V}_{i j}) = 0$ for all $i \neq j$.
Next consider the case $i = j$ and $j \neq k$. Using $\beta(\mathcal{V}_{a b}) = 0$ for $a \neq b$ we arrive at the condition $(1 - q^2) \beta(\mathcal{V}_{i i}) v_k = 0$, which implies $\beta(\mathcal{V}_{i i}) = 0$.
Finally the argument for $\Lambda_q(V^*)$ is completely identical.
\end{proof}

\section{Semisimple case}
\label{sec:semisimple}

Since the simple modules $V$ and $V^*$ do not admit PBW-deformations, we consider something with a bit more structure like the semisimple module $H := V(\omega_1) \oplus V(\omega_{N - 1})$.
However, as we will explain below, instead of considering the braided exterior algebra $\Lambda_q(H)$ we will use a different construction, which will occupy the rest of this section.

\subsection{A problem and a solution}

An unpleasant feature of the algebra $\Lambda_q(H)$ is its lack of flatness, despite the algebras $\Lambda_q(V)$ and $\Lambda_q(V^*)$ being flat deformations.
For example, for $N = 2$ we have $V \cong V^*$ and in \cite[Example 3.5.1.5]{tucker} it is shown that $\Lambda_q(V \oplus V)$ is not flat (actually this is shown for $S_q(V \oplus V)$, but the result can be immediately modified).
Hence we do not even have a vector space isomorphism between $\Lambda_q (H)$ and $\Lambda_q(V) \otimes \Lambda_q(V^*)$, which clashes with our expectations from the classical setting.

A similar problem was encountered in \cite{invariant-theory}, where the authors discuss a quantum version of the first fundamental theorem of classical invariant theory.
In the cited paper they observe that the quantum symmetric algebra $S_q(\oplus^m V)$, corresponding to $m$ copies of a certain module $V$, is not flat for $m > 1$.
Their solution is to replace $S_q(\oplus^m V)$ with a twisted tensor product of $m$ copies of $S_q(V)$, which is easily seen to be flat for all $m$.

Here we will use the same strategy and take the twisted tensor product of $\Lambda_q(V)$ and $\Lambda_q(V^*)$. However, unlike \cite{invariant-theory}, we will not simply use the braiding on the category of $\Uqsl$-modules as a twisting map, but we will consider a rescaled version of it. On one hand this is needed to introduce the appropriate minus signs for a tensor product of exterior algebras. On the other hand we will see that a non-trivial choice of this rescaling will be needed to obtain non-trivial PBW-deformations of quadratic-constant type.

\subsection{Relations of a twisted tensor product algebra}

The following lemma is fairly straightforward, and describes the space of relations of a twisted tensor product of two Koszul algebras, which is the setting we are interested in.

\begin{lemma}
\label{lem:relations-twisted}
Let $A = Q(V, R_V)$ and $B = Q(W, R_W)$ be Koszul algebras. Let $\tau : B \otimes A \to A \otimes B$ be a graded twisting map. Then the twisted tensor product $A \otimes_\tau B$ is isomorphic to the Koszul algebra $Q(V \oplus W, P)$, where $P = R_V \oplus R_W \oplus R_{V, W}$ and
\[
R_{V, W} = \{ w \otimes v - \tau(w \otimes v) : v \in V, w \in W \}.
\]
\end{lemma}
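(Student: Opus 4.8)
The plan is to identify $A \otimes_\tau B$, as an algebra, with an explicit quotient of the tensor algebra $T(V \oplus W)$ and then invoke the Koszulity statement from \cref{prop:twisted-properties}. First I would recall that $A \otimes_\tau B$ is by definition the vector space $A \otimes_\mathbb{K} B$ equipped with the multiplication $\mu_\tau = (\mu_A \otimes \mu_B)(\id_A \otimes \tau \otimes \id_B)$, and that the canonical inclusions $i_A(a) = a \otimes 1$ and $i_B(b) = 1 \otimes b$ are algebra maps whose images generate $A \otimes_\tau B$. Hence $V \oplus W$, sitting inside $A \otimes_\tau B$ via $i_A|_V$ and $i_B|_W$, generates the algebra, which gives a canonical surjection $\phi : T(V \oplus W) \to A \otimes_\tau B$. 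The task is to show that $\ker \phi = \langle P \rangle$ with $P$ as stated.

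Next I would check $P \subseteq \ker \phi$. For $R_V$: since $i_A$ is an algebra homomorphism and $R_V$ maps to zero in $A = T(V)/\langle R_V\rangle$, we get $\phi(R_V) = 0$; similarly $\phi(R_W) = 0$. For $R_{V,W}$: given $v \in V$, $w \in W$, write $\tau(w \otimes v) = \sum_k v_k \otimes w_k$ (using that $\tau$ is graded, so $\tau(W \otimes V) \subseteq V \otimes W$). In $A \otimes_\tau B$ we have $i_B(w) i_A(v) = (1 \otimes w)(v \otimes 1) = \mu_\tau(1 \otimes w \otimes v \otimes 1) = \sum_k v_k \otimes w_k = \sum_k i_A(v_k) i_B(w_k)$, which is exactly the statement that $\phi(w \otimes v - \tau(w \otimes v)) = 0$. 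So $\langle P \rangle \subseteq \ker\phi$, and $\phi$ descends to a surjection $\bar\phi : Q(V \oplus W, P) \to A \otimes_\tau B$.

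It remains to show $\bar\phi$ is injective, and here the cleanest route is a dimension (Hilbert series) count rather than a direct argument. The relations $R_{V,W}$ allow one to move every letter of $W$ to the right of every letter of $V$ within a monomial, so $Q(V \oplus W, P)$ is spanned by products $\bar a \cdot \bar b$ with $\bar a$ a monomial in $V$ and $\bar b$ a monomial in $W$; modulo $R_V$ and $R_W$ this shows the canonical map $A \otimes_\mathbb{K} B \to Q(V \oplus W, P)$ is surjective, i.e.\ $\dim Q(V \oplus W, P)_n \le \dim (A \otimes_\mathbb{K} B)_n$ in each degree (the grading being well defined since all of $R_V, R_W, R_{V,W}$ are homogeneous of degree two, using gradedness of $\tau$). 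On the other hand $\bar\phi$ composed with this map is precisely the linear isomorphism $(i_A, i_B) : A \otimes_\mathbb{K} B \xrightarrow{\ \sim\ } A \otimes_\tau B$ from the definition of twisted tensor product. A surjection followed by a map onto a space of at least the same dimension forces all three maps to be isomorphisms in each degree; hence $\bar\phi$ is an isomorphism and $A \otimes_\tau B \cong Q(V \oplus W, P)$ as algebras. Finally, Koszulity of $Q(V\oplus W, P)$ is immediate from part~2 of \cref{prop:twisted-properties}, since $A$ and $B$ are Koszul and $\tau$ is graded.

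The one genuine subtlety — the ``main obstacle'' — is the injectivity of $\bar\phi$: one must be careful that the degree count actually closes, i.e.\ that reducing a word of $T(V\oplus W)$ using only the relations $R_{V,W}$ (plus $R_V, R_W$) does not require any relation not already in $P$, and that the three maps in the chain $A \otimes_\mathbb{K} B \twoheadrightarrow Q(V\oplus W,P) \xrightarrow{\bar\phi} A \otimes_\tau B \cong A \otimes_\mathbb{K} B$ compose to the identity on the nose. Both points are bookkeeping, but they are where the gradedness of $\tau$ and the defining isomorphism $(i_A,i_B)$ are really used.
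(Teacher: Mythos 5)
Your proof is correct and follows essentially the same route as the paper's: identify the obvious relations coming from the multiplication identities $(a\otimes 1)(a'\otimes 1)=aa'\otimes 1$, $(1\otimes b)(a\otimes 1)=\tau(b\otimes a)$, etc., and then conclude that there are no further relations by a dimension count, with Koszulity supplied by \cref{prop:twisted-properties}. The only difference is that you spell out the ``by dimensional reasons'' step explicitly via the straightening argument and the linear isomorphism $(i_A,i_B)$, which the paper leaves implicit.
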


\begin{proof}
We have already recalled in \cref{prop:twisted-properties} that, given these assumptions, $A \otimes_\tau B$ is a Koszul algebra, hence we only need to determine the space of quadratic relations.
It is immediate, using the properties of a twisting map, to show the following identities
\begin{gather*}
(a \otimes 1) \cdot (a^\prime \otimes 1) = a a^\prime \otimes 1, \quad
(1 \otimes b) \cdot (1 \otimes b^\prime) = 1 \otimes b b^\prime, \\
(a \otimes 1) \cdot (1 \otimes b) = a \otimes b, \quad
(1 \otimes b) \cdot (a \otimes 1) = \tau(b \otimes a).
\end{gather*}
The first two identities show that $A$ and $B$ are subalgebras, hence we get the relations $R_V$ and $R_W$. Next, writing $\tau(b \otimes a) = \sum_i a_i \otimes b_i$ and using the last two identities, we get
\[
\sum_i (a_i \otimes 1) \cdot (1 \otimes b_i) = \sum_i a_i \otimes b_i = (1 \otimes b) \cdot (a \otimes 1).
\]
Since $\tau$ is a graded twisting map, that is $\tau(B_i \otimes A_j) \subset A_j \otimes B_i$, it suffices to impose this relation on the generators. 
Hence we get the subspace of relations
\[
R_{V, W} = \{ w \otimes v - \tau(w \otimes v) : v \in V, w \in W \} \subset (V \oplus W)^{\otimes 2}.
\]
By dimensional reasons these are all the relations.
\end{proof}

\subsection{Exterior algebra}

We will now define a quantum analogue of the exterior algebra $\Lambda(H)$, which will not coincide with the braided exterior algebra $\Lambda_q(H)$, as explained above.
Instead we will take an appropriate twisted tensor product of $\Lambda_q(V)$ and $\Lambda_q(V^*)$.

The twisting map we will use is built from the braiding of the category of $\Uqsl$-modules, which we have denoted by $\braidR$. Observe that $\braidR_{\Lambda_q(V^*), \Lambda_q(V)}$ is completely determined by $\braidR_{V^*, V}$ and by the multiplication maps $\wedge$, by naturality of the braiding.
Moreover it is easy to see that, if we rescale the braiding $\braidR_{V^*, V}$ by $\rescal \in \mathbb{K}^\times$, then there is a unique way to extend this to a graded twisting map. This motivates the following definition.

\begin{definition}
Let $\tau_\rescal : \Lambda_q(V^*) \otimes \Lambda_q(V) \to \Lambda_q(V) \otimes \Lambda_q(V^*)$ be the twisting map determined by $\tau_\rescal(w \otimes v) = - \rescal \braidR_{V^*, V}(w \otimes v)$, with $v \in V$, $w \in V^*$ and $\rescal \in \mathbb{K}^\times$. Then we define $\twistedext{\rescal} := \Lambda_q(V) \otimes_{\tau_\rescal} \Lambda_q(V^*)$ to be the twisted tensor product with respect to $\tau_\rescal$.
\end{definition}

Observe that $\tau_\rescal$ is a graded twisting map and a $\Uqsl$-module homomorphism. Then, since $\Lambda_q(V)$ and $\Lambda_q(V^*)$ are Koszul algebras, it follows from \cref{prop:twisted-properties} that $\twistedext{\rescal}$ is a $\Uqsl$-module algebra which is Koszul. We denote by $R_H \subset H \otimes H$ its subspace of quadratic relations.
From \cref{lem:relations-twisted} we have that $R_H = R_V \oplus R_{V^*} \oplus R_{V, V^*}$, where
\[
R_{V, V^*} = \{ w \otimes v + \rescal \braidR_{V^*, V} (w \otimes v) : v \in V, w \in V^* \}.
\]
These relations can be written in a more uniform way if we make the following definition.

\begin{notation}
We define the linear map $\sigma_\rescal : H \otimes H \to H \otimes H$ by
\[
\sigma_\rescal := \begin{cases}
q^{1 + \frac{1}{N}} \braidR_{V, V}, & V \otimes V \\
\lambda^{-1} \braidR_{V^*, V}^{-1}, & V \otimes V^* \\
\rescal \braidR_{V^*, V}, & V^* \otimes V\\
q^{1 + \frac{1}{N}}\braidR_{V^*, V^*}, & V^* \otimes V^*
\end{cases}.
\]
\end{notation}

We will sometimes omit the subscript $\rescal$ to avoid excessive clutter. With this definition we can write down the relations in a way which parallels the classical case.

\begin{proposition}
\label{prop:iso-image}
We have $\twistedext{\rescal} \cong T(H) / \langle R_H \rangle$, where $R_H = \mathrm{im} (\id + \sigma_\rescal)$.
\end{proposition}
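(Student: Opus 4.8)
The plan is to show that the space of quadratic relations $R_H$ produced by \cref{lem:relations-twisted} coincides with $\mathrm{im}(\id + \sigma_\rescal)$, where the identity and $\sigma_\rescal$ act on all of $H \otimes H$. Since \cref{prop:iso-image} asks only for the relation space of the Koszul algebra $\twistedext{\rescal}$, and \cref{lem:relations-twisted} already identifies this space as $R_H = R_V \oplus R_{V^*} \oplus R_{V, V^*}$, the work is a component-by-component comparison against the four blocks of $\sigma_\rescal$ dictated by the decomposition $H \otimes H = (V \otimes V) \oplus (V \otimes V^*) \oplus (V^* \otimes V) \oplus (V^* \otimes V^*)$.

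I would proceed block by block. On $V \otimes V$: by \cref{cor:image-hecke} we have $R_V = S_q^2 V = \mathrm{im}(\id + q^{1 + \frac{1}{N}} \braidR_{V, V}) = \mathrm{im}(\id + \sigma_\rescal|_{V \otimes V})$, which is exactly the first block. Likewise on $V^* \otimes V^*$ we get $R_{V^*} = \mathrm{im}(\id + q^{1 + \frac{1}{N}} \braidR_{V^*, V^*})$, the fourth block. For the mixed blocks, note that \cref{lem:relations-twisted} gives the relations on $V^* \otimes V$ as $R_{V, V^*} = \{ w \otimes v + \rescal \braidR_{V^*, V}(w \otimes v) \}$, i.e.\ the image of $\id + \rescal \braidR_{V^*, V} = \id + \sigma_\rescal|_{V^* \otimes V}$, matching the third block. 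The one block requiring a small argument is $V \otimes V^*$: here $\sigma_\rescal$ is defined as $\rescal^{-1} \braidR_{V^*, V}^{-1} : V \otimes V^* \to V^* \otimes V$, so $\id + \sigma_\rescal|_{V \otimes V^*}$ has image equal to $\{ v \otimes w + \rescal^{-1} \braidR_{V^*, V}^{-1}(v \otimes w) \}$. Applying the invertible operator $\rescal \braidR_{V^*, V}$ (which sends $V^* \otimes V$ back to $V \otimes V^*$, wait---one must be careful with the direction) shows this image spans the same subspace of $H \otimes H$ as $R_{V, V^*}$; more precisely, writing $u = \braidR_{V^*, V}^{-1}(v \otimes w) \in V^* \otimes V$, the element $v \otimes w + \rescal^{-1} u$ equals $\rescal^{-1}(\rescal \braidR_{V^*, V}(u) + u) = \rescal^{-1}(\id + \rescal \braidR_{V^*, V})(u)$, so the two images coincide as subspaces. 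Summing the four blocks gives $\mathrm{im}(\id + \sigma_\rescal) = R_V \oplus R_{V^*} \oplus R_{V, V^*} = R_H$, and then \cref{lem:relations-twisted} yields the claimed presentation $\twistedext{\rescal} \cong T(H)/\langle R_H \rangle$.

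The main subtlety---really the only one---is bookkeeping the domains and codomains in the $V \otimes V^*$ block, since $\braidR_{V^*, V}$ and its inverse swap the two summands $V \otimes V^*$ and $V^* \otimes V$, and one must check that $\id + \sigma_\rescal$ restricted to $V \otimes V^*$ produces a subspace of $H \otimes H$ equal (not merely isomorphic) to the subspace $R_{V, V^*}$ coming from \cref{lem:relations-twisted}, which was a priori described as living inside $V^* \otimes V \oplus V \otimes V^*$. The identity $v \otimes w + \rescal^{-1}\braidR_{V^*,V}^{-1}(v\otimes w) = \rescal^{-1}(\id + \rescal\braidR_{V^*,V})\bigl(\braidR_{V^*,V}^{-1}(v\otimes w)\bigr)$ settles this. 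Everything else is a direct application of \cref{cor:image-hecke} and \cref{lem:relations-twisted}, so the proof is short.
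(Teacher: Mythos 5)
Your proof is correct and follows essentially the same route as the paper: identify the diagonal blocks via \cref{cor:image-hecke}, read off the $V^* \otimes V$ block directly from \cref{lem:relations-twisted}, and show the $V \otimes V^*$ block gives the same subspace by the substitution $u = \braidR_{V^*,V}^{-1}(v \otimes w)$, i.e.\ the operator identity $(\id + \lambda^{-1}\braidR_{V^*,V}^{-1})\,\lambda\braidR_{V^*,V} = \id + \lambda\braidR_{V^*,V}$ used in the paper. No gaps.
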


\begin{proof}
We need to show that $R_H = R_V \oplus R_{V^*} \oplus R_{V, V^*}$  can be rewritten as $R_H = \mathrm{im} (\id + \sigma_\rescal)$.
It follows from \cref{cor:image-hecke} that we can write
\[
\mathrm{im} (\id + \sigma_\rescal) (V \otimes V) = R_V, \quad
\mathrm{im} (\id + \sigma_\rescal) (V^* \otimes V^*) = R_{V^*}.
\]
Next we have $(\id + \sigma_\rescal) (V^* \otimes V) = R_{V, V^*}$. Finally we show that $(\id + \sigma_\rescal) (V \otimes V^*) = R_{V, V^*}$.
Observe that $V \otimes V^* = \rescal \braidR_{V^*, V}(V^* \otimes V)$, since $\rescal \braidR_{V^*, V}$ is an isomorphism. Then
\[
(\id + \sigma_\rescal) (V \otimes V^*) = (\id + \lambda^{-1} \braidR_{V^*, V}^{-1}) \lambda \braidR_{V^*, V}(V^* \otimes V) = (\lambda \braidR_{V^*, V} + \id) (V^* \otimes V).
\]
From this computation we obtain the conclusion.
\end{proof}

\begin{remark}
The classical exterior algebra $\Lambda(H)$ is isomorphic to $T(H) / \langle \mathrm{im} (\id + \tau) \rangle$, where $\tau$ is the flip map $\tau(x \otimes y) = y \otimes x$. Clearly we have
\[
\mathrm{im} (\id + \tau) (V \otimes V^*) = \mathrm{im} (\id + \tau) (V^* \otimes V).
\]
Hence in the classical limit $q \to 1$ the algebra $\twistedext{\rescal}$ reduces to the exterior algebra $\Lambda(H)$ and the linear map $\sigma_\lambda$ reduces to the flip map $\tau$, provided that $\lambda \to 1$.
\end{remark}

The linear map $\sigma_\rescal$ satisfies the braid equation, since it is defined in terms of $\braidR$. Moreover we have $R_H = \mathrm{im} \symm{2}$, where $\symm{2}$ is the symmetrizer defined by $\sigma_\rescal$. Then it follows from \cref{lem:symm-intersection} that $\mathrm{im} \symm{3}$ is contained in the intersection $(R_H \otimes H) \cap (H \otimes R_H)$.
Using this fact we will obtain a basis of this subspace, which we will use to check the PBW-deformation condition.

\section{PBW-deformations in the semisimple case}
\label{sec:pbw-semisimple}

In this section we classify the PBW-deformations of quadratic-constant type for the algebras $\twistedext{\rescal}$. We start by giving a convenient basis for the intersection $(R_H \otimes H) \cap (H \otimes R_H) \subset H^{\otimes 3}$, which will be used to check the deformation condition for the linear map $\beta : R_H \to \mathbb{K}$.

\subsection{Basis elements}

We begin by writing down a basis for the subspace of quadratic relations $R_H = \mathrm{im} (\id + \sigma_\rescal) \subset H \otimes H$.
First observe that the elements $\mathcal{V}_{i j}$ and $\mathcal{W}_{i j}$ from \cref{not:VandW} can be written as $\mathcal{V}_{i j} = (\id + \sigma_\rescal) (v_i \otimes v_j)$ and $\mathcal{W}_{i j} = (\id + \sigma_\rescal) (w_i \otimes w_j)$.

\begin{notation}
\label{not:Mij}
We define $\mathcal{M}_{i j} := (\id + \sigma_\rescal) (w_i \otimes v_j$). We will also write $\rescal^\prime := \rescal q^{\frac{1}{N}}$.
\end{notation}

More explicitly, these elements are given by the expression
\begin{equation}
\label{eq:Mij-expression}
\mathcal{M}_{i j} = w_i \otimes v_j + \rescal^\prime q^{- \delta_{i j}} v_j \otimes w_i - \rescal^\prime \delta_{i j} (q - q^{-1}) \sum_{k = 1}^{i - 1} v_k \otimes w_k.
\end{equation}
It is clear that $\{ \mathcal{V}_{i j} \}_{i \geq j}$, $\{ \mathcal{W}_{i j} \}_{i \leq j}$ and $\{ \mathcal{M}_{i j} \}_{i, j}$ give a basis of $R_H$.

Next we need to determine a basis for the intersection $(R_H \otimes H) \cap (H \otimes R_H) \subset H^{\otimes 3}$. Clearly $\mathcal{V}_{i j k}$ and $\mathcal{W}_{i j k}$ from \cref{not:VandW3} belong to this intersection, since $R_V, R_{V^*} \subset R_H$.

\begin{notation}
We define the elements $\mathcal{X}_{i j k} := \symm{3}(w_i \otimes v_j \otimes v_k)$ for $j \geq k$.
\end{notation}

In the next lemma we obtain explicit expressions for $\mathcal{X}_{i j k}$.

\begin{lemma}
\label{lem:x-identities}
We have the identity
\[
\begin{split}
\mathcal{X}_{i j k} & = \rescal^{\prime 2} q^{- \delta_{i j} - \delta_{i k}} \mathcal{V}_{j k} \otimes w_i + q^{\delta_{j k} + 1} \mathcal{M}_{i k} \otimes v_j + \mathcal{M}_{i j} \otimes v_k \\
 & - \rescal^{\prime 2}(q - q^{-1}) \sum_{\ell = 1}^{i - 1} (\delta_{i k} q^{-\delta_{i j}} \mathcal{V}_{j \ell} + \delta_{i j} q^{-\delta_{\ell k}} \mathcal{V}_{\ell k}) \otimes w_{\ell} \\
 & + \delta_{i j} \theta(j - k)\rescal^{\prime 2} (q - q^{-1})^2 \sum_{\ell = 1}^{k - 1} \mathcal{V}_{k \ell}\otimes w_\ell.
\end{split}
\]
We also have the identity
\[
\begin{split}
\mathcal{X}_{i j k} & = w_i \otimes \mathcal{V}_{j k} + \rescal^\prime q^{-\delta_{i j}} v_j \otimes \mathcal{M}_{i k} + \rescal^\prime q^{-\delta_{i k} + \delta_{j k} + 1} v_k \otimes \mathcal{M}_{ij} \\
& - \rescal^\prime (q - q^{-1}) \sum_{\ell = 1}^{i - 1} v_\ell \otimes (\delta_{i j} \mathcal{M}_{\ell k} + \delta_{i k} q^{\delta_{j k} + 1} \mathcal{M}_{\ell j}).
\end{split}
\]
\end{lemma}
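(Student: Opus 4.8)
The plan is to prove both identities by direct computation, starting from the two factorizations of the symmetrization operator already used in the proof of \cref{lem:symm-intersection}: from \eqref{eq:sym-first} and \eqref{eq:sym-second} we have
\[
\symm{3} = (\id + \sigma_1)(\id + \sigma_2 + \sigma_2 \sigma_1), \qquad \symm{3} = (\id + \sigma_2)(\id + \sigma_1 + \sigma_1 \sigma_2),
\]
where $\sigma$ denotes $\sigma_\rescal$. Since $\id + \sigma_1 = (\id + \sigma) \otimes \id$ has image inside $R_H \otimes H$ and $\id + \sigma_2 = \id \otimes (\id + \sigma)$ has image inside $H \otimes R_H$ — using $R_H = \mathrm{im}(\id + \sigma_\rescal)$ from \cref{prop:iso-image} — applying the first factorization to $w_i \otimes v_j \otimes v_k$ automatically produces an element of $R_H \otimes H$, and the second one an element of $H \otimes R_H$ (so in particular $\mathcal{X}_{i j k}$ lies in the intersection); it only remains to identify the coefficients in terms of the basis elements of $R_H$, using $\mathcal{V}_{a b} = (\id + \sigma)(v_a \otimes v_b)$ and $\mathcal{M}_{a b} = (\id + \sigma)(w_a \otimes v_b)$.

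First I would tabulate the action of the elementary pieces on $w_i \otimes v_j \otimes v_k$ with $j \geq k$. The tensor slots visited by $\sigma_1$, $\sigma_2$, $\sigma_2 \sigma_1$ and $\sigma_1 \sigma_2$ only ever involve the pairs $V^* \otimes V$ and $V \otimes V$, so the only relevant components of $\sigma$ are $\rescal \braidR_{V^*, V}$ (given explicitly in \cref{prop:braiding-vstar-v}) and $q^{1 + \frac{1}{N}} \braidR_{V, V}$; the awkward inverse braiding never appears here. Thus $\sigma_2(w_i \otimes v_j \otimes v_k) = q^{\delta_{j k} + 1} w_i \otimes v_k \otimes v_j$, while $\sigma_1(w_i \otimes v_j \otimes v_k) = \rescal^\prime q^{-\delta_{i j}} v_j \otimes w_i \otimes v_k - \rescal^\prime \delta_{i j} (q - q^{-1}) \sum_{\ell = 1}^{i - 1} v_\ell \otimes w_\ell \otimes v_k$, and $\sigma_2 \sigma_1$ and $\sigma_1 \sigma_2$ are obtained by one more substitution of the same formulas, the former producing in addition single and double sums carried by the Kronecker deltas. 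For the first identity I would then apply $(\id + \sigma_1) = (\id + \sigma) \otimes \id$ to $(\id + \sigma_2 + \sigma_2 \sigma_1)(w_i \otimes v_j \otimes v_k)$ and collect: the term $w_i \otimes v_j \otimes v_k$ yields $\mathcal{M}_{i j} \otimes v_k$, the term $\sigma_2(\cdots)$ yields $q^{\delta_{j k} + 1} \mathcal{M}_{i k} \otimes v_j$, and the (longest) term $\sigma_2 \sigma_1(\cdots)$, whose leading piece is $\rescal^{\prime 2} q^{-\delta_{i j} - \delta_{i k}} v_j \otimes v_k \otimes w_i$ together with $\delta$-supported sums, yields the remaining $\mathcal{V}$-contributions. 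The second identity follows identically from $(\id + \sigma_2) = \id \otimes (\id + \sigma)$ applied to $(\id + \sigma_1 + \sigma_1 \sigma_2)(w_i \otimes v_j \otimes v_k)$, now recognizing $(\id + \sigma)(w_a \otimes v_b) = \mathcal{M}_{a b}$.

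The one place where care is needed is purely combinatorial: one must check that under the constraints imposed by the relevant Kronecker deltas — together with $j \geq k$ and the summation ranges $\ell, m \leq i - 1$ — every pair $v_a \otimes v_b$ to which the outer $(\id + \sigma)$ is applied is either already ordered (so its image is $\mathcal{V}_{a b}$) or is handled by the Hecke-type relation $(\id + \sigma)(v_a \otimes v_b) = q \, \mathcal{V}_{b a}$ for $a < b$ (\cref{prop:hecke-relations}), so that with the convention $\mathcal{V}_{a b} = \symm{2}(v_a \otimes v_b)$ extended to all indices the output matches the stated expression term by term. The main obstacle is thus the bookkeeping of the nested sums and overlapping deltas in the $\sigma_2 \sigma_1$ term of the first identity: in particular, verifying that the $\delta_{i j}\delta_{\ell k}$-supported double sum collapses to $\theta(j - k)\rescal^{\prime 2}(q - q^{-1})^2 \sum_{\ell = 1}^{k - 1} \mathcal{V}_{k \ell} \otimes w_\ell$ — with the Heaviside factor appearing precisely because $\delta_{i j} = 1$ and $k = \ell \leq i - 1$ force $k < j$ — and that the $q$-powers and ranges of all remaining single sums line up after the outer symmetrizer is applied. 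There is no conceptual difficulty: the entire argument is the two factorizations of $\symm{3}$ plus careful substitution of the explicit braiding formulas.
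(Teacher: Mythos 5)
Your proposal is correct and follows essentially the same route as the paper: apply the two factorizations of $\symm{3}$ from \eqref{eq:sym-first} and \eqref{eq:sym-second} to $w_i \otimes v_j \otimes v_k$, substitute the explicit formulas for the $V^*\otimes V$ and $V\otimes V$ components of $\sigma_\rescal$, collapse the $\delta_{ij}\delta_{\ell k}$-supported double sum into the $\theta(j-k)$ term, and recognize the results as $\mathcal{V}$- and $\mathcal{M}$-elements. You also correctly flag the only genuinely delicate points — the double-sum collapse and the (implicit) extension of the notation $\mathcal{V}_{ab}=(\id+\sigma)(v_a\otimes v_b)$ to unordered indices — which is exactly where the paper's computation concentrates.
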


\begin{proof}
The two identities follow by applying the two expressions for $\symm{3}$ given by \eqref{eq:sym-first} and \eqref{eq:sym-second} to the elements $w_i \otimes v_j \otimes v_k$. First we compute
\[
\sigma_1 (w_i \otimes v_j \otimes v_k) = \rescal^\prime q^{-\delta_{i j}} v_j \otimes w_i \otimes v_k - \delta_{i j} \rescal^\prime (q - q^{-1}) \sum_{\ell = 1}^{i - 1} v_\ell \otimes w_\ell \otimes v_k.
\]
Next, since we have the condition $j \geq k$, we get $\sigma_2 (w_i \otimes v_j \otimes v_k) = q^{\delta_{j k} + 1} w_i \otimes v_k \otimes v_j$. Then
\[
\sigma_1 \sigma_2 (w_i \otimes v_j \otimes v_k) = \rescal^\prime q^{-\delta_{i k} + \delta_{j k} + 1} v_k \otimes w_i \otimes v_j - \delta_{i k} q^{\delta_{j k} + 1} \rescal^\prime (q - q^{-1}) \sum_{\ell = 1}^{i - 1} v_\ell \otimes w_\ell \otimes v_j.
\]
The most complicated term is the one obtained by applying $\sigma_2 \sigma_1$. We compute
\[
\begin{split}
\sigma_2 \sigma_1 (w_i \otimes v_j \otimes v_k) & = \sigma_2 \left( \rescal^\prime q^{-\delta_{i j}} v_j \otimes w_i \otimes v_k - \delta_{i j} \rescal^\prime (q - q^{-1}) \sum_{\ell = 1}^{i - 1} v_\ell \otimes w_\ell \otimes v_k \right) \\
 & = \rescal^\prime q^{-\delta_{i j}} v_j \otimes \left(\rescal^\prime q^{-\delta_{i k}} v_k \otimes w_i - \delta_{i k} \rescal^\prime (q - q^{-1}) \sum_{\ell = 1}^{i - 1} v_\ell \otimes w_\ell \right) \\
 & - \delta_{i j} \rescal^\prime (q - q^{-1}) \sum_{\ell = 1}^{i - 1} v_\ell \otimes \left(\rescal^\prime q^{-\delta_{\ell k}} v_k \otimes w_\ell - \delta_{\ell k} \rescal^\prime (q - q^{-1}) \sum_{m = 1}^{\ell - 1} v_m \otimes w_m \right).
\end{split}
\]
Since we have the condition $j \geq k$ we obtain the identity
\[
\delta_{i j} \sum_{\ell = 1}^{i - 1} \sum_{m = 1}^{\ell - 1} \delta_{\ell k} v_\ell \otimes v_m \otimes w_m = \delta_{i j} \theta(j - k) \sum_{m = 1}^{k - 1} v_k \otimes v_m \otimes w_m.
\]
Hence the expression for $\sigma_2 \sigma_1 (w_i \otimes v_j \otimes v_k)$ can be rewritten as
\[
\begin{split}
\sigma_2 \sigma_1 (w_i \otimes v_j \otimes v_k) & = \rescal^{\prime 2} q^{-(\delta_{i j} + \delta_{i k})} v_j \otimes v_k \otimes w_i \\
 & - \rescal^{\prime2} (q -  q^{-1}) \sum_{\ell = 1}^{i - 1} (\delta_{i j} q^{-\delta_{\ell k}} v_\ell \otimes v_k \otimes w_\ell + \delta_{i k} q^{-\delta_{i j}} v_j \otimes v_\ell  \otimes w_\ell) \\
 & + \delta_{i j} \theta(j - k)\rescal^{\prime 2} (q - q^{-1})^2 \sum_{\ell = 1}^{k - 1} v_k \otimes v_\ell \otimes w_\ell.
\end{split}
\]
Then the result follows by using the definitions of $\mathcal{V}_{i j}$ and $\mathcal{M}_{i j}$.
\end{proof}

\begin{notation}
We define the elements $\mathcal{Y}_{i j k} := \symm{3}(w_i \otimes w_j \otimes v_k)$ for $i \leq j$.
\end{notation}

In the next lemma we obtain explicit expressions for $\mathcal{Y}_{i j k}$.

\begin{lemma}
\label{lem:y-identities}
We have the identity
\[
\begin{split}
\mathcal{Y}_{i j k} & = \mathcal{W}_{i j} \otimes v_k + \rescal^\prime q^{- \delta_{j k}} \mathcal{M}_{i k} \otimes w_j + \rescal^\prime q^{\delta_{i j} - \delta_{i k} + 1} \mathcal{M}_{j k} \otimes w_i \\
& - \rescal^\prime (q - q^{-1}) \sum_{\ell = 1}^{k - 1} (\delta_{j k} \mathcal{M}_{i \ell} + \delta_{i k} q^{\delta_{i j} + 1} \mathcal{M}_{j \ell}) \otimes w_\ell.
\end{split}
\]
We also have the identity
\[
\begin{split}
\mathcal{Y}_{i j k} & = w_i \otimes \mathcal{M}_{j k} + q^{\delta_{i j} + 1} w_j \otimes \mathcal{M}_{i k} + \rescal^{\prime 2} q^{- \delta_{j k} - \delta_{i k}} v_k \otimes \mathcal{W}_{i j} \\
& - \rescal^{\prime 2} (q - q^{-1}) \sum_{\ell = 1}^{k - 1} v_\ell \otimes (\delta_{i k} q^{- \delta_{j k}} \mathcal{W}_{\ell j} + \delta_{j k} q^{- \delta_{i \ell}} \mathcal{W}_{i \ell}) \\
 & + \delta_{j k} \theta(j - i) \rescal^{\prime 2} (q - q^{-1})^2 \sum_{m = 1}^{i - 1} v_m \otimes \mathcal{W}_{m i}.
\end{split}
\]
\end{lemma}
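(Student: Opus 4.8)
The plan is to follow the proof of \cref{lem:x-identities} essentially verbatim: substitute the two expansions \eqref{eq:sym-first} and \eqref{eq:sym-second} of $\symm{3}$ into $\symm{3}(w_i \otimes w_j \otimes v_k)$ and keep track of all the resulting terms. The hypothesis $i \leq j$ plays here the same simplifying role that $i \geq j \geq k$ played there.

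For the first identity I would use \eqref{eq:sym-first}, so that $\id + \sigma_1$ is applied last, to the first two legs. The preliminary step is to record, using $i \leq j$ together with \cref{prop:braiding-vstar} and \cref{prop:braiding-vstar-v}: first, $\sigma_1(w_i \otimes w_j \otimes v_k) = q^{\delta_{i j} + 1} w_j \otimes w_i \otimes v_k$, with no correction term since the Heaviside factor $\theta(i - j)$ vanishes; second, $\sigma_2(w_i \otimes w_j \otimes v_k) = \rescal^\prime q^{-\delta_{j k}} w_i \otimes v_k \otimes w_j - \rescal^\prime \delta_{j k}(q - q^{-1}) \sum_{\ell = 1}^{j - 1} w_i \otimes v_\ell \otimes w_\ell$; and third, $\sigma_2 \sigma_1(w_i \otimes w_j \otimes v_k) = q^{\delta_{i j} + 1}\sigma_2(w_j \otimes w_i \otimes v_k)$, which one expands by the same rule. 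Applying $\id + \sigma_1$ to the first two legs of each of these and recognising $\symm{2}(w_i \otimes w_j) = \mathcal{W}_{i j}$, $\symm{2}(w_i \otimes v_k) = \mathcal{M}_{i k}$, $\symm{2}(w_j \otimes v_k) = \mathcal{M}_{j k}$, $\symm{2}(w_i \otimes v_\ell) = \mathcal{M}_{i \ell}$, and so on, one reads off the claimed expression; the sums with upper limit $j - 1$ may be written with upper limit $k - 1$ because they carry the factor $\delta_{j k}$. This part involves no double sums and no $\theta$-factors, and is routine.

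For the second identity I would instead use \eqref{eq:sym-second}, so that $\id + \sigma_2$ is applied last, to the last two legs. The $\id$ and $\sigma_1$ pieces immediately give $w_i \otimes \mathcal{M}_{j k}$ and $q^{\delta_{i j} + 1} w_j \otimes \mathcal{M}_{i k}$. The substantive computation is the $\sigma_1 \sigma_2$ piece: $\sigma_2$ first produces, via the $\delta_{j k}$-term of \cref{prop:braiding-vstar-v}, a sum over $\ell \leq j - 1$, and then $\sigma_1$ applied to the first two legs produces, via its own $\delta$-term, a nested sum over $m \leq i - 1$. Using $i \leq j$, the Kronecker delta $\delta_{i \ell}$ appearing in this nested sum forces $\ell = i$, which is consistent with the constraint $\ell \leq j - 1$ precisely when $i < j$; this is exactly the origin of the factor $\theta(j - i)$ and of the $(q - q^{-1})^2$ term in the statement. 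Applying $\id + \sigma_2$ to the last two legs and re-expressing the combinations $\symm{2}(w_i \otimes w_\ell)$ and $\symm{2}(w_m \otimes w_i)$ as ($q$-multiples of) $\mathcal{W}$-elements, allowing the indices in either order, then yields the formula.

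The only genuine obstacle is the bookkeeping of these nested double sums: keeping the summation ranges straight, correctly collapsing $\sum_\ell \delta_{i \ell}(\cdots)$ by means of $i \leq j$, and matching the resulting single sums --- with their $\theta(j - i)$ prefactors --- against the $\mathcal{W}_{\ell j}$, $\mathcal{W}_{i \ell}$ and $\mathcal{W}_{m i}$ terms in the statement. No idea beyond the proof of \cref{lem:x-identities} is required. One might hope to shortcut part of the work via the isomorphism $\psi$ of \cref{lem:iso-v-vstar}, as was done for the $\mathcal{W}$-identities in \cref{lem:vw-identities}; however, since $\mathcal{Y}_{i j k}$ mixes one $V$-leg with two $V^*$-legs, there is no clean reflection symmetry relating it to $\mathcal{X}_{i j k}$, and the direct computation appears to be the cleanest route.
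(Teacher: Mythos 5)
Your proposal matches the paper's proof essentially verbatim: both compute $\sigma_1$, $\sigma_2$, $\sigma_2\sigma_1$ and $\sigma_1\sigma_2$ applied to $w_i \otimes w_j \otimes v_k$ directly from \cref{prop:braiding-vstar} and \cref{prop:braiding-vstar-v}, then substitute into \eqref{eq:sym-first} and \eqref{eq:sym-second}, with the collapse of the nested sum via $\delta_{i\ell}$ under the constraint $\ell \leq j-1$ producing the $\theta(j-i)(q-q^{-1})^2$ term exactly as you describe. No gaps.
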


\begin{proof}
Since $i \leq j$ we have $\sigma_1 (w_i \otimes w_j \otimes v_k) = q^{\delta_{i j} + 1} w_j \otimes w_i \otimes v_k$. Next we compute
\[
\sigma_2 (w_i \otimes w_j \otimes v_k) = \rescal^\prime q^{- \delta_{j k}} w_i \otimes v_k \otimes w_j - \delta_{j k} \rescal^\prime (q - q^{-1}) \sum_{\ell = 1}^{j - 1} w_i \otimes v_\ell \otimes w_\ell.
\]
Combining these two expressions we obtain
\[
\sigma_2 \sigma_1 (w_i \otimes w_j \otimes v_k) = \rescal^\prime q^{\delta_{i j} - \delta_{i k} + 1} w_j \otimes v_k \otimes w_i - \delta_{i k} \rescal^\prime q^{\delta_{i j} + 1} (q - q^{-1}) \sum_{\ell = 1}^{i - 1} w_j \otimes v_\ell \otimes w_\ell.
\]
The most complicated piece to compute is
\[
\begin{split}
\sigma_1 \sigma_2 (w_i \otimes w_j \otimes v_k) & =
\rescal^\prime q^{- \delta_{j k}} \sigma_1 (w_i \otimes v_k \otimes w_j) - \delta_{j k} \rescal^\prime (q - q^{-1}) \sum_{\ell = 1}^{j - 1} \sigma_1 (w_i \otimes v_\ell \otimes w_\ell) \\
 & = \rescal^\prime q^{- \delta_{j k}} \left( \rescal^\prime q^{- \delta_{i k}} v_k \otimes w_i - \delta_{i k} \rescal^\prime (q - q^{-1}) \sum_{\ell = 1}^{i - 1} v_\ell \otimes w_\ell \right) \otimes w_j \\
 & - \delta_{j k} \rescal^\prime (q - q^{-1}) \sum_{\ell = 1}^{j - 1} \left( \rescal^\prime q^{- \delta_{i \ell}} v_\ell \otimes w_i - \delta_{i \ell} \rescal^\prime (q - q^{-1}) \sum_{m = 1}^{i - 1} v_m \otimes w_m \right) \otimes w_\ell.
\end{split}
\]
Proceeding as in the previous lemma we find
\[
\begin{split}
\sigma_1 \sigma_2 (w_i \otimes w_j \otimes v_k)
& = \rescal^{\prime 2} q^{- \delta_{j k} - \delta_{i k}} v_k \otimes w_i \otimes w_j \\
& - \rescal^{\prime 2} (q - q^{-1}) \sum_{\ell = 1}^{k - 1} v_\ell \otimes (\delta_{i k} q^{- \delta_{j k}} w_\ell \otimes w_j + \delta_{j k} q^{- \delta_{i \ell}} w_i \otimes w_\ell) \\
& + \delta_{j k} \theta(j - i) \rescal^{\prime 2} (q - q^{-1})^2 \sum_{m = 1}^{i - 1} v_m \otimes w_m \otimes w_i.
\end{split}
\]
Plugging these into the two expressions for $\symm{3}$ we obtain the result.
\end{proof}

It is clear from their expressions that the elements $\{ \mathcal{X}_{i j k} \}_{i, j \geq k}$ and $\{ \mathcal{Y}_{i j k} \}_{i \leq j, k}$ are linearly independent. Then, together with $\{ \mathcal{V}_{i j k} \}_{i \geq j \geq k}$ and $\{ \mathcal{W}_{i j k} \}_{i \leq j \leq k}$, they give a basis of the intersection $(R_H \otimes H) \cap (H \otimes R_H)$. Indeed the algebra $\twistedext{\rescal}$ is a flat deformation of the classical exterior algebra $\Lambda(H)$, hence by dimensional reasons we have a basis.

\subsection{PBW-deformations}

We start by investigating the condition $(\beta \otimes \id - \id \otimes \beta) (\mathcal{X}_{i j k}) = 0$.

\begin{lemma}
\label{lem:beta-check1}
Suppose $(\beta \otimes \id - \id \otimes \beta) (\mathcal{X}_{i j k}) = 0$ for all $\mathcal{X}_{i j k}$.

1) If $\rescal^\prime \neq q$ then $\beta(\mathcal{M}_{i j}) = 0$ for all $i$ and $j$.

2) If $\rescal^\prime = q$ then $\beta(\mathcal{M}_{i j}) = \delta_{i j} c$ for some $c \in \mathbb{K}$.
\end{lemma}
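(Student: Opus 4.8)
The plan is to expand the hypothesis $(\beta \otimes \id - \id \otimes \beta)(\mathcal{X}_{i j k}) = 0$ using the two presentations of $\mathcal{X}_{i j k}$ from \cref{lem:x-identities}: the first one, a combination of terms $r \otimes h$ with $r \in R_H$, computes $\beta \otimes \id$, and the second, a combination of terms $h \otimes r$, computes $\id \otimes \beta$. The result lies in $H = V \oplus V^*$, and the decisive observation is that its component in $V$ involves only the scalars $\beta(\mathcal{M}_{a b})$: in \cref{lem:x-identities} every $\mathcal{V}$-term is tensored with a vector of $V^*$, hence contributes purely to the $V^*$-component. Projecting onto $V$ thus yields, for every $i$ and every $j \geq k$, an identity among the $v_m$ of the form
\[
(q^{\delta_{j k} + 1} - \rescal^\prime q^{-\delta_{i j}}) \beta(\mathcal{M}_{i k}) v_j + (1 - \rescal^\prime q^{\delta_{j k} - \delta_{i k} + 1}) \beta(\mathcal{M}_{i j}) v_k + \rescal^\prime (q - q^{-1}) \sum_{\ell = 1}^{i - 1} (\delta_{i j} \beta(\mathcal{M}_{\ell k}) + \delta_{i k} q^{\delta_{j k} + 1} \beta(\mathcal{M}_{\ell j})) v_\ell = 0 .
\]

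From here I would specialize the index pattern and compare coefficients of the linearly independent $v_m$, taking care that when $k < i$ the vector $v_k$ receives a contribution both as an isolated term and as the $\ell = k$ term of the sum. Three patterns suffice, all available for $N \geq 2$. For $\mathcal{X}_{i i i}$: the $v_i$-coefficient is $(q^2 + 1)(1 - \rescal^\prime q^{-1}) \beta(\mathcal{M}_{i i})$, which, since $q$ is not a root of unity, reduces to the key relation $(q - \rescal^\prime) \beta(\mathcal{M}_{i i}) = 0$ for all $i$; the $v_\ell$-coefficients with $\ell < i$ give $\beta(\mathcal{M}_{\ell i}) = 0$, i.e.\ $\beta(\mathcal{M}_{a b}) = 0$ whenever $a < b$. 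For $\mathcal{X}_{i j j}$ with $i \neq j$: one gets $(q^2 + 1)(1 - \rescal^\prime) \beta(\mathcal{M}_{i j}) = 0$. For $\mathcal{X}_{i i k}$ with $k < i$: the $v_i$-coefficient gives $(q - \rescal^\prime q^{-1}) \beta(\mathcal{M}_{i k}) = 0$, while the $v_k$-coefficient gives the diagonal relation $(1 - \rescal^\prime q) \beta(\mathcal{M}_{i i}) + \rescal^\prime (q - q^{-1}) \beta(\mathcal{M}_{k k}) = 0$. Since $q^2 \neq 1$, the factors $1 - \rescal^\prime$ and $q - \rescal^\prime q^{-1}$ cannot both vanish, so the second and third patterns together give $\beta(\mathcal{M}_{i k}) = 0$ for $i > k$; combined with the previous point, $\beta(\mathcal{M}_{a b}) = 0$ for all $a \neq b$, independently of $\rescal^\prime$.

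The two cases of the statement then separate immediately. If $\rescal^\prime \neq q$, the relation $(q - \rescal^\prime) \beta(\mathcal{M}_{i i}) = 0$ forces $\beta(\mathcal{M}_{i i}) = 0$ for all $i$, and with the vanishing of the off-diagonal entries this is part 1. If $\rescal^\prime = q$, the diagonal relation becomes $(1 - q^2)(\beta(\mathcal{M}_{i i}) - \beta(\mathcal{M}_{k k})) = 0$, so all diagonal values coincide; writing $c$ for this common value gives $\beta(\mathcal{M}_{i j}) = \delta_{i j} c$, which is part 2.

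I expect no conceptual obstacle: the work is essentially the bookkeeping just indicated, plus observing that the single factor $q - \rescal^\prime$ multiplying $\beta(\mathcal{M}_{i i})$ is exactly what generates the dichotomy. One point worth noting is that only the hypothesis on the $\mathcal{X}$-elements is used; the elements $\mathcal{V}_{i j k}$, $\mathcal{W}_{i j k}$ and $\mathcal{Y}_{i j k}$ of the intersection $(R_H \otimes H) \cap (H \otimes R_H)$ do not enter this lemma.
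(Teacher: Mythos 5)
Your proof is correct and follows essentially the same route as the paper: expand the hypothesis via the two expressions of \cref{lem:x-identities} and compare coefficients of the $v_m$, the key point in both arguments being that the $V$-component of $(\beta\otimes\id-\id\otimes\beta)(\mathcal{X}_{ijk})$ involves only the scalars $\beta(\mathcal{M}_{ab})$. Your choice of index patterns ($\mathcal{X}_{iii}$, $\mathcal{X}_{ijj}$, $\mathcal{X}_{iik}$) is a genuine small improvement: it never needs three distinct indices, so $N=2$ is covered uniformly, whereas the paper takes $i,j,k$ all distinct and defers $N=2$ to a separate check; your observation that the $\mathcal{V}$-terms land entirely in the $V^*$-component also makes the argument independent of first knowing $\beta(\mathcal{V}_{ab})=0$. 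One caveat: the paper's proof additionally verifies that when $\rescal^\prime=q$ the assignment $\beta(\mathcal{M}_{ij})=\delta_{ij}c$ satisfies \emph{all} the $\mathcal{X}$-conditions (the remaining cases with exactly two coinciding indices), which is not required by the lemma as stated but is implicitly used in \cref{thm:pbw-semisimple} to conclude that every $c$ actually yields a deformation; if you rely on your version, that consistency check should be recorded somewhere.
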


\begin{proof}
To compute $(\beta \otimes \id - \id \otimes \beta) (\mathcal{X}_{i j k}) = 0$ we use the expressions for $\mathcal{X}_{i j k}$ given in \cref{lem:x-identities}. Taking into account that $\beta(\mathcal{V}_{a b}) = 0$ for all $a, b$ we obtain
\[
\begin{split}
0 & = q^{\delta_{j k} + 1} \beta(\mathcal{M}_{i k}) v_j + \beta(\mathcal{M}_{i j}) v_k \\
& - \rescal^\prime q^{-\delta_{i j}} \beta(\mathcal{M}_{i k}) v_j - \rescal^\prime q^{-\delta_{i k} + \delta_{j k} + 1} \beta(\mathcal{M}_{ij}) v_k \\
& + \rescal^\prime (q - q^{-1}) \sum_{\ell = 1}^{i - 1} (\delta_{i j} \beta(\mathcal{M}_{\ell k}) + \delta_{i k} q^{\delta_{j k} + 1} \beta(\mathcal{M}_{\ell j})) v_\ell.
\end{split}
\]
We omit the verification of the case $N = 2$, which can be checked separately. Suppose that $N > 2$, so that is possible to choose $i, j, k$ are all distinct. We get
\[
(q - \rescal^\prime) \beta(\mathcal{M}_{i k}) v_j + (1 - \rescal^\prime q) \beta(\mathcal{M}_{i j}) v_k = 0.
\]
This implies $\beta(\mathcal{M}_{i j}) = 0$ for $i \neq j$. Next consider the case $i = j = k$. We obtain
\[
0 = (q + q^{-1}) (q - \rescal^\prime) \beta(\mathcal{M}_{i i}) v_i,
\]
where we have used that $\beta(\mathcal{M}_{i j}) = 0$ for $i \neq j$.
Then we must have $\beta(\mathcal{M}_{i i}) = 0$ unless $\rescal^\prime = q$.

To check the remaining cases we fix $\rescal^\prime = q$. Let us rewrite the general condition as
\[
\begin{split}
0 & = q (q^{\delta_{j k}} - q^{-\delta_{i j}}) \beta(\mathcal{M}_{i k}) v_j + (1 - q^{-\delta_{i k} + \delta_{j k} + 2}) \beta(\mathcal{M}_{ij}) v_k \\
& + q (q - q^{-1}) \sum_{\ell = 1}^{i - 1} (\delta_{i j} \beta(\mathcal{M}_{\ell k}) + \delta_{i k} q^{\delta_{j k} + 1} \beta(\mathcal{M}_{\ell j})) v_\ell.
\end{split}
\]
We are left with checking the cases where exactly two indices coincide.
In the case $j = k$ and $i \neq j$ we see that the condition is identically satisfied. Next for $i = k$ and $i \neq j$ we have
\[
q (q - q^{-1}) \sum_{\ell = 1}^{i - 1} q \beta(\mathcal{M}_{\ell j}) v_\ell = 0.
\]
Recall that the elements $\mathcal{X}_{i j k}$ are defined for $j \geq k$. In this case this implies $j > i$ and hence the term above vanishes. Finally we are left with the case $i = j$ and $j \neq k$. We get
\[
- q (q - q^{-1}) \beta(\mathcal{M}_{ii}) v_k + q (q - q^{-1}) \sum_{\ell = 1}^{i - 1} \beta(\mathcal{M}_{\ell k}) v_\ell = 0.
\]
As above we have $i = j > k$. Hence this condition can be rewritten as
\[
- q (q - q^{-1}) \beta(\mathcal{M}_{ii}) v_k + q (q - q^{-1}) \beta(\mathcal{M}_{k k}) v_k = 0.
\]
Since $0 < q < 1$ we conclude that $\beta(\mathcal{M}_{i i}) = \beta(\mathcal{M}_{k k})$ for $i > k$.
\end{proof}

Next we check the PBW-deformation condition for the elements $\mathcal{Y}_{i j k}$.

\begin{lemma}
\label{lem:beta-check2}
Suppose $\rescal^\prime = q$ and $\beta(\mathcal{M}_{i j}) = \delta_{i j} c$. Then $(\beta \otimes \id - \id \otimes \beta) (\mathcal{Y}_{i j k}) = 0$ is satisfied.
\end{lemma}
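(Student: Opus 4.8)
The plan is to evaluate $(\beta \otimes \id - \id \otimes \beta)(\mathcal{Y}_{ijk})$ directly from the two formulas for $\mathcal{Y}_{ijk}$ in \cref{lem:y-identities}, applying $\beta \otimes \id$ to the first formula, $\id \otimes \beta$ to the second, and subtracting. Before doing this I would record the data to be substituted: by hypothesis $\rescal^\prime = q$ and $\beta(\mathcal{M}_{ij}) = \delta_{ij} c$, while $\beta(\mathcal{V}_{ab}) = 0$ and $\beta(\mathcal{W}_{ab}) = 0$ for all $a,b$. The latter two vanishings are exactly the ones already invoked in the proof of \cref{lem:beta-check1}: since $\mathcal{V}_{ijk} \in (R_V \otimes V) \cap (V \otimes R_V) \subseteq (R_H \otimes H) \cap (H \otimes R_H)$, and similarly for $\mathcal{W}_{ijk}$, the PBW condition on $R_H$ restricts to the PBW condition on $R_V$ and on $R_{V^*}$, which by \cref{thm:pbw-simple} forces $\beta = 0$ there.

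With these substitutions the computation collapses. In the first formula the term $\mathcal{W}_{ij} \otimes v_k$ dies, and applying $\beta \otimes \id$ to the rest leaves, after using $\rescal^\prime = q$, a vector supported on $w_j$, $w_i$ and the $w_\ell$ with $\ell < k$, whose coefficients involve only $\delta_{ik}$, $\delta_{jk}$, powers of $q$, and the prefactor $\rescal^\prime(q-q^{-1}) = q^2 - 1$ in front of the sum. In the second formula every term except $w_i \otimes \mathcal{M}_{jk}$ and $q^{\delta_{ij}+1} w_j \otimes \mathcal{M}_{ik}$ carries a factor $\beta(\mathcal{W}_\bullet)$ or $\beta(\mathcal{V}_\bullet)$ and dies under $\id \otimes \beta$, leaving just $\delta_{jk}\,c\,w_i + \delta_{ik} q^{\delta_{ij}+1}\,c\,w_j$. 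I would then split the difference into the terms carrying $\delta_{ik}$ (a multiple of $w_j$ together with $-c(q^2-1)q^{\delta_{ij}+1}\sum_{\ell<k}\delta_{j\ell} w_\ell$) and those carrying $\delta_{jk}$ (a multiple of $w_i$ together with $-c(q^2-1)\sum_{\ell<k}\delta_{i\ell} w_\ell$), so that it suffices to see each group vanishes.

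Then I would run the case analysis over the coincidence pattern of $i,j,k$, recalling that $\mathcal{Y}_{ijk}$ is defined only for $i \le j$; the patterns are: $i,j,k$ pairwise distinct (so $i<j$), $i=j\neq k$, $i=k<j$, $i<j=k$, and $i=j=k$. In the first two patterns all surviving coefficients are zero outright. For $i=k<j$ the $w_j$-coefficient $q^{1-\delta_{jk}} - q^{\delta_{ij}+1}$ reduces to $q-q=0$, and since $j\notin\{1,\dots,k-1\}$ the attached sum is empty. The only pattern with a genuine cancellation is $i<j=k$: the $w_i$-coefficient $q^{\delta_{ij}-\delta_{ik}+2}-1 = q^2-1$ coming from the $\mathcal{M}_{jk}\otimes w_i$ versus $w_i\otimes\mathcal{M}_{jk}$ terms is cancelled by the $-(q^2-1)\,w_i$ produced by $\sum_{\ell<k}\delta_{i\ell}w_\ell = w_i$ (using $i<k$). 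Finally, for $i=j=k$ both sums are empty, the $\delta_{ik}$-group equals $(1-q^2)c\,w_i$ and the $\delta_{jk}$-group equals $(q^2-1)c\,w_i$, and they cancel. This covers all cases, uniformly in $N$ (for $N=2$ the pairwise-distinct pattern simply does not occur).

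I do not expect a conceptual obstacle: the statement is purely computational and follows once the two expansions of $\mathcal{Y}_{ijk}$ are on the table. The only point that needs genuine care is the bookkeeping — keeping straight, in each pattern, whether $i$ or $j$ lies inside the summation range $\{1,\dots,k-1\}$ (this is what the standing inequality $i\le j$ together with the sub/superdiagonal conditions decide), and collecting the various $q^{\pm\delta_{\bullet\bullet}}$ factors together with $\rescal^\prime = q$ and $\rescal^{\prime 2} = q^2$ so that the two occurrences of $q^2-1$ appear with opposite signs.
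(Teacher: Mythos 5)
Your proposal is correct and follows essentially the same route as the paper: apply $\beta\otimes\id$ and $\id\otimes\beta$ to the two expansions of $\mathcal{Y}_{ijk}$ from \cref{lem:y-identities}, substitute $\rescal^\prime=q$, $\beta(\mathcal{V}_{ab})=\beta(\mathcal{W}_{ab})=0$ and $\beta(\mathcal{M}_{ab})=\delta_{ab}c$, and verify the cancellation case by case. The paper merely packages your case analysis into the single identity $\sum_{\ell=1}^{k-1}(\delta_{jk}\delta_{i\ell}+\delta_{ik}q^{\delta_{ij}+1}\delta_{j\ell})w_\ell=\delta_{jk}\theta(j-i)w_i$, which is exactly the bookkeeping you carry out by hand.
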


\begin{proof}
We apply the linear map $\beta$ to the two expressions for $\mathcal{Y}_{i j k}$ given in \cref{lem:y-identities}.
First, using $\beta(\mathcal{W}_{i j}) = 0$ and $\beta(\mathcal{M}_{i j}) = \delta_{i j} c$, we compute
\[
c^{-1} (\beta \otimes \id) \mathcal{Y}_{i j k} = \delta_{i k} q^{- \delta_{j k} + 1} w_j + \delta_{j k} q^{\delta_{i j} - \delta_{i k} + 2} w_i - q (q - q^{-1}) \sum_{\ell = 1}^{k - 1} (\delta_{j k} \delta_{i \ell} + \delta_{i k} q^{\delta_{i j} + 1} \delta_{j \ell}) w_\ell.
\]
Since $i \leq j$ by definition of the elements $\mathcal{Y}_{i j k}$, we have the identity
\[
\sum_{\ell = 1}^{k - 1} (\delta_{j k} \delta_{i \ell} + \delta_{i k} q^{\delta_{i j} + 1} \delta_{j \ell}) w_\ell = \delta_{j k} \theta(j - i) w_i.
\]
Plugging this in and simplifying we obtain the expression
\[
c^{-1} (\beta \otimes \id) \mathcal{Y}_{i j k} = \delta_{i k} q^{- \delta_{j k} + 1} w_j + \delta_{j k} q^2 w_i - \delta_{j k} \theta(j - i) q (q - q^{-1}) w_i.
\]
But then a simple inspection shows that it is the same as the other term
\[
c^{-1} (\id \otimes \beta) \mathcal{Y}_{i j k} = \delta_{j k} w_i + \delta_{i k} q^{\delta_{i j} + 1} w_j.
\]
For example in the case $i \neq j$ and $j = k$ we get
\[
c^{-1} (\beta \otimes \id) \mathcal{Y}_{i j j} = q^2 w_i - q (q - q^{-1}) w_i = w_i.
\]
On the other hand we have $c^{-1} (\id \otimes \beta) \mathcal{Y}_{i j j} = w_i$.
\end{proof}

Now we are ready to classify the PBW-deformations of the algebra $\twistedext{\rescal}$.

\begin{theorem}
\label{thm:pbw-semisimple}
The PBW-deformations of $\twistedext{\rescal}$ of quadratic-constant type are as follows.

1) For $\lambda \neq q^{1 - \frac{1}{N}}$ there are no non-trivial deformations.

2) For $\lambda = q^{1 - \frac{1}{N}}$ the non-trivial deformations are parametrized by $c \in \mathbb{K}^\times$ and given by
\[
\beta(\mathcal{V}_{i j}) = \beta(\mathcal{W}_{i j}) = 0, \quad
\beta(\mathcal{M}_{i j}) = \delta_{i j} c.
\]
\end{theorem}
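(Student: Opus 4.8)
The plan is to reduce the classification to the three families of basis elements of the intersection $(R_H \otimes H) \cap (H \otimes R_H)$ that were produced in the previous subsection, namely $\mathcal{V}_{ijk}$, $\mathcal{W}_{ijk}$, $\mathcal{X}_{ijk}$ and $\mathcal{Y}_{ijk}$, and to check the condition $\beta \otimes \id - \id \otimes \beta = 0$ on each of them. Since a quadratic-constant PBW-deformation is, by the discussion in \cref{sec:pbw-deformations}, the same datum as a linear map $\beta : R_H \to \mathbb{K}$ satisfying exactly this vanishing on the intersection, and since $\twistedext{\rescal}$ is Koszul (being a graded twisted tensor product of the Koszul algebras $\Lambda_q(V)$ and $\Lambda_q(V^*)$, by \cref{prop:twisted-properties}), the necessary conditions are also sufficient. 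So the theorem will follow once we determine precisely which $\beta$ pass all four tests.

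First I would dispose of the ``diagonal'' blocks. The elements $\mathcal{V}_{ijk}$ lie in $(R_V \otimes V) \cap (V \otimes R_V) \subset (R_H \otimes H) \cap (H \otimes R_H)$, and the computation in the proof of \cref{thm:pbw-simple} shows verbatim that $(\beta \otimes \id - \id \otimes \beta)(\mathcal{V}_{ijk}) = 0$ forces $\beta(\mathcal{V}_{ij}) = 0$ for all $i \geq j$; likewise the $\mathcal{W}_{ijk}$ force $\beta(\mathcal{W}_{ij}) = 0$. This is where the hypothesis $0 < q < 1$ enters, exactly as before. So from now on we may assume $\beta$ vanishes on all $\mathcal{V}_{ij}$ and $\mathcal{W}_{ij}$, and the only remaining freedom is the values $\beta(\mathcal{M}_{ij})$.

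Next comes the heart of the argument, which is just to invoke \cref{lem:beta-check1} and \cref{lem:beta-check2}. Applying $\beta \otimes \id - \id \otimes \beta$ to $\mathcal{X}_{ijk}$ and using $\beta(\mathcal{V}_{ab}) = 0$, \cref{lem:beta-check1} tells us that if $\rescal' := \rescal q^{1/N} \neq q$ then all $\beta(\mathcal{M}_{ij})$ vanish, giving only the trivial deformation; and if $\rescal' = q$ — equivalently $\rescal = q^{1 - 1/N}$ — then necessarily $\beta(\mathcal{M}_{ij}) = \delta_{ij} c$ for a single scalar $c$. In the latter case \cref{lem:beta-check2} shows that this choice automatically satisfies the condition on the $\mathcal{Y}_{ijk}$ as well, and the $\mathcal{V}_{ijk}$, $\mathcal{W}_{ijk}$ conditions hold since $\beta$ vanishes on $R_V$ and $R_{V^*}$. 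Hence every such $\beta$ does define a PBW-deformation. Conversely, when $c = 0$ we recover the trivial (undeformed) algebra, so the genuine deformations are indexed by $c \in \mathbb{K}^\times$, and when $\rescal \neq q^{1-1/N}$ there are none; this is precisely the two-case dichotomy in the statement.

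The only subtlety worth flagging is the small-$N$ degeneracy: for $N = 2$ one cannot choose three distinct indices $i, j, k$, so the generic argument that kills $\beta(\mathcal{M}_{ij})$ for $i \neq j$ breaks down and must be replaced by a direct finite check; this is already handled inside the proofs of \cref{lem:beta-check1} and \cref{thm:pbw-simple} (and is the analogue of the classical fact that everything is low-dimensional there), so at the level of this theorem it requires no extra work beyond citing those lemmas. I do not expect any real obstacle in assembling the proof — all the computational weight has been front-loaded into \cref{lem:x-identities}, \cref{lem:y-identities}, \cref{lem:beta-check1} and \cref{lem:beta-check2}, and the proof of the theorem itself is essentially a bookkeeping assembly of these, together with the observation that $c \in \mathbb{K}^\times$ (rather than $c \in \mathbb{K}$) is the correct index set for \emph{non-trivial} deformations.
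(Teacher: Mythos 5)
Your proposal is correct and follows essentially the same route as the paper: invoke Koszulness to make the intersection condition sufficient, use the basis $\{\mathcal{V}_{ijk}\}$, $\{\mathcal{W}_{ijk}\}$, $\{\mathcal{X}_{ijk}\}$, $\{\mathcal{Y}_{ijk}\}$, dispose of the diagonal blocks via \cref{thm:pbw-simple}, and settle the $\beta(\mathcal{M}_{ij})$ via \cref{lem:beta-check1} and \cref{lem:beta-check2}. No gaps.
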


\begin{proof}
Recall that $\twistedext{\rescal}$ is a Koszul algebra, since it is the twisted tensor product of two Koszul algebras. Hence the condition $\beta \otimes \id - \id \otimes \beta = 0$ on $(R_H \otimes H) \cap (H \otimes R_H)$ is sufficient, as well as necessary.
A vector space basis for $(R_H \otimes H) \cap (H \otimes R_H)$ is given by the elements
\[
\{ \mathcal{V}_{i j k} \}_{i \geq j \geq k}, \quad
\{ \mathcal{W}_{i j k} \}_{i \leq j \leq k}, \quad
\{ \mathcal{X}_{i j k} \}_{i, j \geq k}, \quad
\{ \mathcal{Y}_{i j k} \}_{i \leq j, k}.
\]
We have seen in \cref{thm:pbw-simple} that $\beta(\mathcal{V}_{i j}) = \beta(\mathcal{W}_{i j}) = 0$. On the other hand the conditions for $\beta(\mathcal{M}_{i j})$ follow from \cref{lem:beta-check1} and \cref{lem:beta-check2}.
\end{proof}

\section{Further properties}
\label{sec:properties}

In this section we will explore some further properties of the PBW-deformations obtained in the previous section, which we will denote from now by $\mathrm{Cl}_q(c)$. We will obtain a presentation similar to the classical setting, prove that they are $\Uqsl$-module algebras and that they are isomorphic for different values of $c \in \mathbb{K}^\times$. Finally we will compare this quantum Clifford algebra to other notions of quantum Clifford algebras defined by other authors.

Recall that in this section we only consider the value $\lambda = q^{1 - \frac{1}{N}}$ and $\beta_c(\mathcal{M}_{i j}) = \delta_{i j} c$ for some $c \in \mathbb{K}^\times$. We will also write the linear map $\sigma_\rescal$ for this value simply as $\sigma$. Finally recall that we denote by $\langle \cdot, \cdot \rangle : V^* \otimes V \to \mathbb{C}$ the $\Uqsl$-invariant pairing such that $\langle w_i, v_j \rangle = \delta_{i j}$.

\begin{proposition}
\label{prop:cliff-presentation}
The algebra $\mathrm{Cl}_q(c)$ is isomorphic to
\[
T(H) / \langle x \otimes y + \sigma(x \otimes y) - (x, y)_c : x, y \in H \rangle,
\]
where the bilinear form $(\cdot, \cdot)_c : H \otimes H \to \mathbb{C}$ is defined by
\[
(\cdot, \cdot)_c := \begin{cases}
c \langle \cdot, \cdot \rangle, & V^* \otimes V\\
c \langle \cdot, \cdot \rangle \circ q^{-1 + \frac{1}{N}} \braidR_{V^*, V}^{-1}, & V \otimes V^* \\
0, & V \otimes V, V^* \otimes V^*
\end{cases}.
\]
\end{proposition}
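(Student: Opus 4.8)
The plan is to realize both sides as quotients of $T(H)$ and to show the two ideals coincide. Since we are dealing with deformations of quadratic-constant type we have $\alpha = 0$, so by \cite[Lemma 0.4]{pbw-deformation} the algebra $\mathrm{Cl}_q(c)$ has the presentation $T(H)/\langle r - \beta_c(r) : r \in R_H \rangle$, where $R_H = \mathrm{im}(\id + \sigma) \subset H \otimes H$ by \cref{prop:iso-image} and $\beta_c : R_H \to \mathbb{K}$ is the functional of \cref{thm:pbw-semisimple}, determined by $\beta_c(\mathcal{V}_{ij}) = \beta_c(\mathcal{W}_{ij}) = 0$ and $\beta_c(\mathcal{M}_{ij}) = \delta_{ij} c$. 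Writing each relation as $r = (\id + \sigma)(z)$ with $z \in H \otimes H$ (possible since $\id + \sigma$ surjects onto $R_H$), the ideal is generated by the elements $z + \sigma(z) - \beta_c\bigl((\id + \sigma)(z)\bigr)$. Comparing with the claimed presentation, and using bilinearity of both $\id + \sigma$ and $(\cdot, \cdot)_c$, it therefore suffices to establish the identity of linear functionals
\[
\beta_c \circ (\id + \sigma) = (\cdot, \cdot)_c \quad \text{on } H \otimes H.
\]
Once this is known the two generating sets span the same subspace of $F^2(H)$, hence generate the same two-sided ideal, and $\id_{T(H)}$ descends to the desired isomorphism.

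By linearity it is enough to check this identity on each block of the decomposition $H \otimes H = (V \otimes V) \oplus (V \otimes V^*) \oplus (V^* \otimes V) \oplus (V^* \otimes V^*)$. On $V \otimes V$ we have $(\id + \sigma)(V \otimes V) = R_V$ by \cref{cor:image-hecke} (since $\sigma$ restricts to $q^{1 + \frac{1}{N}} \braidR_{V, V}$ there), so $\beta_c$ annihilates $(\id + \sigma)(V \otimes V)$; as $(\cdot, \cdot)_c$ vanishes on $V \otimes V$ by definition, the identity holds, and $V^* \otimes V^*$ is identical. On $V^* \otimes V$ we have $(\id + \sigma)(w_i \otimes v_j) = \mathcal{M}_{ij}$ by \cref{not:Mij}, whence $\beta_c\bigl((\id + \sigma)(w_i \otimes v_j)\bigr) = \delta_{ij} c = c \langle w_i, v_j \rangle = (w_i, v_j)_c$.

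The only block requiring a genuine argument is $V \otimes V^*$. There $\sigma$ acts as $\lambda^{-1} \braidR_{V^*, V}^{-1}$, so $\sigma(v \otimes w) \in V^* \otimes V$, and since $\sigma$ acts on $V^* \otimes V$ as $\lambda \braidR_{V^*, V}$ we get $\sigma\bigl(\sigma(v \otimes w)\bigr) = v \otimes w$; hence $(\id + \sigma)(v \otimes w) = (\id + \sigma)\bigl(\sigma(v \otimes w)\bigr)$, which reduces the evaluation of $\beta_c$ to the block $V^* \otimes V$ already treated. Writing $\sigma(v \otimes w) = \lambda^{-1} \braidR_{V^*, V}^{-1}(v \otimes w) = \sum_{i, j} c_{ij}\, w_i \otimes v_j$ (with $\braidR_{V^*, V}$ as in \cref{prop:braiding-vstar-v}), this gives
\[
\beta_c\bigl((\id + \sigma)(v \otimes w)\bigr) = c \sum_i c_{ii} = c\, \langle \cdot, \cdot \rangle \bigl(\lambda^{-1} \braidR_{V^*, V}^{-1}(v \otimes w)\bigr),
\]
which is exactly $(v, w)_c$ once we recall $\lambda = q^{1 - \frac{1}{N}}$, so $\lambda^{-1} = q^{-1 + \frac{1}{N}}$, matching the definition of $(\cdot, \cdot)_c$ on $V \otimes V^*$.

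I expect the $V \otimes V^*$ block to be the only mildly delicate point: one must use that $\id + \sigma$ takes the same value on $v \otimes w$ and on $\sigma(v \otimes w) \in V^* \otimes V$ — this is exactly where the two off-diagonal rescalings $\lambda$ and $\lambda^{-1}$ of $\sigma$ conspire — and one must track $\lambda^{-1} = q^{-1 + \frac{1}{N}}$ carefully so that it reproduces the factor $q^{-1 + \frac{1}{N}}$ in $(\cdot, \cdot)_c$. Everything else is a direct substitution using the explicit braidings recorded in \cref{sec:braidings}.
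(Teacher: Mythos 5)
Your proof is correct and follows essentially the same route as the paper: both reduce the claim to the identity $\beta_c\circ(\id+\sigma)=(\cdot,\cdot)_c$ checked blockwise, with the only nontrivial point being the $V\otimes V^*$ block, which both arguments handle by observing that the two off-diagonal components of $\sigma$ are mutually inverse, so $(\id+\sigma)$ takes the same value on $v\otimes w$ and on $\sigma(v\otimes w)\in V^*\otimes V$, forcing the factor $q^{-1+\frac{1}{N}}\braidR_{V^*,V}^{-1}$ in the definition of $(\cdot,\cdot)_c$.
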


\begin{proof}
The algebra $\mathrm{Cl}_q(c)$ was defined as the quotient of $T(H)$ by the ideal generated by the subspace $P_c = \{ x - \beta_c(x) : x \in R_H \}$. Recall that $R_H = \mathrm{im}(\id + \sigma)$ by \cref{prop:iso-image} and that it is spanned by the elements $\mathcal{V}_{i j}$, $\mathcal{W}_{i j}$ and $\mathcal{M}_{i j}$.
We want to show that $P_c$ is equal to $\{ x \otimes y + \sigma(x \otimes y) - (x, y)_c : x, y \in H \}$.
It is clear that the latter contains the elements $\mathcal{V}_{i j}$ and $\mathcal{W}_{i j}$.
Next, since $\mathcal{M}_{i j} = (\id + \sigma)(w_i \otimes v_j)$ and $\beta_c(\mathcal{M}_{i j}) = \delta_{i j} c$, we have
\[
\mathcal{M}_{i j} - \beta_c(\mathcal{M}_{i j}) = w_i \otimes v_j + \sigma(w_i \otimes v_j) - (w_i, v_j)_c.
\]
However $\mathcal{M}_{i j}$ can be written in two different ways, since $(\id + \sigma) (V \otimes V^*) = (\id + \sigma) (V^* \otimes V)$, as observed in the proof of \cref{prop:iso-image}. Indeed observe that
\[
\mathcal{M}_{i j} = (\id + q^{1 - \frac{1}{N}} \braidR_{V^*, V}) (w_i \otimes v_j) = (\id + q^{-1 + \frac{1}{N}} \braidR_{V^*, V}^{-1}) q^{1 - \frac{1}{N}} \braidR_{V^*, V} (w_i \otimes v_j),
\]
from which it follows that $\mathcal{M}_{i j} = (\id + \sigma) q^{1 - \frac{1}{N}} \braidR_{V^*, V} (w_i \otimes v_j)$. Then we need to have
\[
(\id + \sigma - (\cdot, \cdot)_c) (w_i \otimes v_j) = (\id + \sigma - (\cdot, \cdot)_c) q^{1 - \frac{1}{N}} \braidR_{V^*, V} (w_i \otimes v_j).
\]
This implies that the bilinear form should satisfy $(w_i, v_j)_c = (\cdot, \cdot)_c \circ q^{1 - \frac{1}{N}} \braidR_{V^*, V} (w_i \otimes v_j)$. But this is satisfied, since on $V \otimes V^*$ we have defined $(\cdot, \cdot)_c = c \langle \cdot, \cdot \rangle \circ q^{-1 + \frac{1}{N}}\braidR_{V^*, V}^{-1}$.
\end{proof}

\begin{remark}
Observe that the bilinear form $(\cdot, \cdot)_c$ is not symmetric, unlike the classical case. On the other hand it satisfies the property $(\cdot, \cdot)_c \circ \sigma = (\cdot, \cdot)_c$, which reduces to the symmetric case in the classical limit, since $\sigma$ reduces to the flip map.
\end{remark}

\begin{remark}
In the classical limit $(\cdot, \cdot)_c$ reduces to a multiple of the Killing form on $\mathfrak{sl}_N$, since $V$ and $V^*$ can be realized as certain Lie subalgebras $\mathfrak{u}_+$ and $\mathfrak{u}_-$ of $\mathfrak{sl}_N$ (see below).
\end{remark}

This presentation shows that $\mathrm{Cl}_q(c)$ essentially coincides with the quantum Clifford algebra defined in \cite{clifford-spinor}, where the starting point is a braiding satisfying the Hecke condition.
It is also a convenient way to show that $\mathrm{Cl}_q(c)$ is a $\Uqsl$-module algebra.

\begin{corollary}
The algebras $\mathrm{Cl}_q(c)$ are $\Uqsl$-module algebras.
\end{corollary}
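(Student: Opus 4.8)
The plan is to work with the presentation of $\mathrm{Cl}_q(c)$ obtained in \cref{prop:cliff-presentation}, namely
\[
\mathrm{Cl}_q(c) \cong T(H) / \langle \Phi(x \otimes y) : x, y \in H \rangle, \qquad \Phi := \id + \sigma - (\cdot, \cdot)_c,
\]
where $\Phi$ is viewed as a linear map $H \otimes H \to \mathbb{K} \oplus (H \otimes H) \subset F^2(H)$ and $\mathbb{K}$ carries the trivial $\Uqsl$-module structure. The key point to establish is that $\Phi$ is a morphism of $\Uqsl$-modules, so that $\operatorname{im} \Phi$ is a $\Uqsl$-submodule of $F^2(H)$. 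Granting this, one uses that $T(H)$ is a $\Uqsl$-module algebra (being the tensor algebra of the module $H$) and that the two-sided ideal generated by a submodule is again a submodule: for $h \in \Uqsl$ and $a p b$ with $p \in \operatorname{im}\Phi$, coassociativity and the module-algebra axiom give $h \cdot (a p b) = \sum (h_{(1)} \cdot a)(h_{(2)} \cdot p)(h_{(3)} \cdot b)$, and each term lies in the ideal since $h_{(2)} \cdot p \in \operatorname{im}\Phi$. Hence the quotient $\mathrm{Cl}_q(c)$ inherits the structure of a $\Uqsl$-module algebra.

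It therefore remains to check that each of the three pieces of $\Phi$ is a $\Uqsl$-module map. The map $\id$ is trivially one. The map $\sigma = \sigma_\rescal$ is a module map because it is defined piecewise in terms of $\braidR_{V, V}$, $\braidR_{V^*, V}$, $\braidR_{V^*, V^*}$, the inverse $\braidR_{V^*, V}^{-1}$, and scalar multiples of these, all of which are module maps by naturality of the braiding on Type 1 representations; this is the same fact already used to know that $\twistedext{\rescal}$ is a $\Uqsl$-module algebra. Finally, $(\cdot, \cdot)_c : H \otimes H \to \mathbb{K}$ is $\Uqsl$-invariant: it vanishes on $V \otimes V$ and $V^* \otimes V^*$; on $V^* \otimes V$ it is $c$ times the $\Uqsl$-invariant pairing $\langle \cdot, \cdot \rangle$; and on $V \otimes V^*$ it is $c \langle \cdot, \cdot \rangle \circ q^{-1 + \frac{1}{N}} \braidR_{V^*, V}^{-1}$, the composition of an invariant pairing with a module isomorphism, hence still invariant. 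Thus $\Phi$ maps $H \otimes H$ equivariantly into $\mathbb{K} \oplus (H \otimes H)$.

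I do not expect a genuine obstacle here; the argument is essentially a matter of bookkeeping. The only mildly delicate point is the invariance of $(\cdot, \cdot)_c$ on the summand $V \otimes V^*$, which is not imposed directly but follows from its invariance on $V^* \otimes V$ together with naturality of the braiding (equivalently, from the fact, established in the proof of \cref{prop:cliff-presentation}, that $(\id + \sigma)(V \otimes V^*) = (\id + \sigma)(V^* \otimes V)$ and that $(\cdot,\cdot)_c$ is pulled back through $q^{1 - \frac{1}{N}}\braidR_{V^*, V}$). One could instead argue directly from the original description $P_c = \{ x - \beta_c(x) : x \in R_H \}$ by verifying that $\beta_c : R_H \to \mathbb{K}$ is $\Uqsl$-invariant, using $R_H = R_V \oplus R_{V^*} \oplus R_{V, V^*}$ as $\Uqsl$-modules and that $\beta_c$ annihilates $R_V$ and $R_{V^*}$ while restricting to the projection onto the trivial isotypic component of $R_{V, V^*} \cong V^* \otimes V$; but routing through \cref{prop:cliff-presentation} is cleaner, since there the invariance is manifest.
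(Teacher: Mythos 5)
Your argument is correct and follows the same route as the paper: both proofs invoke the presentation of \cref{prop:cliff-presentation} and reduce the claim to the $\Uqsl$-invariance of the relation subspace, which holds because $\sigma$ is built from braidings and $(\cdot,\cdot)_c$ from the invariant pairing composed with $\braidR_{V^*,V}^{-1}$. Your write-up merely makes explicit the standard bookkeeping (ideal generated by a submodule is a submodule) that the paper leaves implicit.
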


\begin{proof}
It suffices to show that the subspace of relations is invariant under the action of $\Uqsl$. By \cref{prop:cliff-presentation} this can be written as $\{ x \otimes y + \sigma(x \otimes y) - (x, y)_c : x, y \in H \}$.
It suffices to notice that the bilinear form $(\cdot, \cdot)_c$ is $\Uqsl$-invariant, since it is defined in terms of the equivariant maps $\langle \cdot, \cdot \rangle$ and $\braidR_{V^*, V}^{-1}$. Then we obtain the conclusion.
\end{proof}

Next we show that the algebras $\mathrm{Cl}_q(c)$ for different values of $c$ are isomorphic.

\begin{proposition}
We have $\mathrm{Cl}_q(c) \cong \mathrm{Cl}_q(1)$ as $\Uqsl$-module algebras.
\end{proposition}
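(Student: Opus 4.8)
The plan is to exhibit an explicit isomorphism by rescaling the generators of $H = V \oplus V^*$. Since the relations of $\mathrm{Cl}_q(c)$ involve the bilinear form $(\cdot,\cdot)_c$, which is linear in $c$ on the mixed components $V^* \otimes V$ and $V \otimes V^*$ and vanishes on $V \otimes V$ and $V^* \otimes V^*$, the idea is to find a grading-type automorphism of $T(H)$ that scales the quadratic form by a factor of $c$ while preserving the map $\sigma$. Concretely, I would try the linear map $\phi : H \to H$ acting as a scalar $\mu$ on $V$ and a scalar $\nu$ on $V^*$, extended multiplicatively to $T(H)$, and determine $\mu, \nu$ so that $\phi$ sends the defining relations of $\mathrm{Cl}_q(c)$ to those of $\mathrm{Cl}_q(1)$.

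First I would record how $\sigma$ and $(\cdot,\cdot)_c$ transform under $\phi \otimes \phi$. On $V \otimes V$, $V^* \otimes V^*$ the map $\phi \otimes \phi$ acts by $\mu^2$ resp.\ $\nu^2$, and since $\sigma$ preserves each of these subspaces, $\sigma$ commutes with $\phi \otimes \phi$ there; the bilinear form is $0$ on these pieces, so nothing further is needed. On $V^* \otimes V$, the map $\phi \otimes \phi$ acts by $\mu\nu$; again $\sigma = \lambda \braidR_{V^*,V}$ maps $V^* \otimes V$ to $V \otimes V^*$ equivariantly, so $\sigma (\phi \otimes \phi) = (\phi \otimes \phi) \sigma$ because $\phi \otimes \phi$ also acts by $\mu\nu$ on $V \otimes V^*$. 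Thus the only effect of conjugating the relation $x \otimes y + \sigma(x \otimes y) - (x,y)_c$ by $\phi$ is to replace $(\cdot,\cdot)_c$ by $(\mu\nu)^{-1}(\cdot,\cdot)_c$ on the mixed components — that is, by $(\cdot,\cdot)_{c/(\mu\nu)}$. Choosing $\mu, \nu$ with $\mu\nu = c$, for instance $\mu = c$ and $\nu = 1$, gives $\phi(\langle P_c \rangle) = \langle P_1 \rangle$, hence an algebra isomorphism $\mathrm{Cl}_q(c) \cong \mathrm{Cl}_q(1)$.

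Finally I would check $\Uqsl$-equivariance of $\phi$: since $\phi$ acts as a scalar on each of the simple summands $V$ and $V^*$, it is automatically a $\Uqsl$-module map on $H$, and its multiplicative extension to $T(H)$ is a module algebra map; it therefore descends to an isomorphism of $\Uqsl$-module algebras. The main obstacle — really the only point requiring care — is verifying that $\sigma$ commutes with $\phi \otimes \phi$ despite $\sigma$ exchanging the $V \otimes V^*$ and $V^* \otimes V$ components; this works precisely because $\phi \otimes \phi$ acts by the same scalar $\mu\nu$ on both, which is why a diagonal (rather than fully general) rescaling suffices. One should also confirm that $\phi$ genuinely maps the full ideal, not merely the spanning relations, which is immediate since $\phi$ is an algebra automorphism of $T(H)$ carrying the generating subspace $P_c$ onto $P_1$.
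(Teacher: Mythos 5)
Your proposal is correct and matches the paper's proof: the paper uses exactly the rescaling $f(v_i)=c\,v_i$, $f(w_i)=w_i$ (your choice $\mu=c$, $\nu=1$) and checks $T(f)P_c=P_1$ via $(\cdot,\cdot)_c=c(\cdot,\cdot)_1$ and the vanishing of the form on $V\otimes V$ and $V^*\otimes V^*$. Your explicit verification that $\sigma$ commutes with $\phi\otimes\phi$ because it acts by the same scalar $\mu\nu$ on both mixed components is the right justification for the step the paper calls ``immediate to check.''
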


\begin{proof}
We can obtain an isomorphism of two non-homogeneous quadratic algebras $Q(V, P)$ and $Q(V^\prime, P^\prime)$  as follows. Suppose we have a vector space isomorphism $f: V \to V^\prime$ such that $T(f) P = P^\prime$, where $T(f) : T(V) \to T(V^\prime)$ is the extension of $f$ to the corresponding tensor algebras. Then $T(f)$ induces an algebra isomorphism. Now consider the map
\[
f(v_i) = c v_i, \quad f(w_i) = w_i, \quad c \in \mathbb{K}^\times.
\]
It is clearly an equivariant automorphism of $H = V \oplus V^*$. Consider the presentation given in \cref{prop:cliff-presentation} and let $P_c = \{ x \otimes y + \sigma(x \otimes y) - (x, y)_c : x, y \in H \}$. Then it is immediate to check that $T(f) P_c = P_1$, using the fact that $(\cdot, \cdot)_c = c (\cdot, \cdot)_1$ and $(V, V)_1 = (V^*, V^*)_1 = 0$. 
\end{proof}

Finally we discuss the connection between $\mathrm{Cl}_q(c)$ and the quantum Clifford algebras introduced in \cite{qclifford}. We refer to this paper for all unexplained material appearing below.

Let us briefly review the definition of these algebras.
Let $\mathfrak{g}$ be a complex simple Lie algebra. We consider a cominuscule parabolic subalgebra and denote by $\mathfrak{l}$ its Levi factor, and by $\mathfrak{u}_\pm$ its nilradical and its opposite.
The $U_q(\mathfrak{l})$-modules $\mathfrak{u}_\pm$ are simple, hence there is a unique $U_q(\mathfrak{l})$-invariant dual pairing $\langle \cdot, \cdot \rangle : \mathfrak{u}_- \otimes \mathfrak{u}_+ \to \mathbb{C}$, up to a scalar.
It can be extended to a dual pairing $\langle \cdot, \cdot \rangle_k : \Lambda_q^k(\mathfrak{u}_-) \otimes \Lambda_q^k(\mathfrak{u}_+) \to \mathbb{C}$ as in \cite[Proposition 3.6]{qclifford}.
The module $\mathfrak{u}_+$ acts on $\Lambda_q(\mathfrak{u}_+)$ by left multiplication, denoted by $\gamma_+$.
We also obtain an action of $\mathfrak{u}_-$ on $\Lambda_q(\mathfrak{u}_+)$ by dualizing right multiplication on $\Lambda_q(\mathfrak{u}_-)$.
We denote this action by $\gamma_-$.
By \cite[Theorem 5.1]{qclifford} the map $\Lambda_q(\mathfrak{u}_-) \otimes \Lambda_q(\mathfrak{u}_+) \to \mathrm{End}_\mathbb{C}(\Lambda_q(\mathfrak{u}_+))$ is an equivariant isomorphism.

Hence the algebra $\mathrm{End}_\mathbb{C}(\Lambda_q(\mathfrak{u}_+))$, together with its factorization in terms of $\gamma_-$ and $\gamma_+$, can be considered a \emph{quantum Clifford algebra}, which we will denote by $\widetilde{\mathrm{Cl}}_q$.
It should be thought of as the Clifford algebra of the complex tangent space $\mathfrak{u}_+ \oplus \mathfrak{u}_-$.

\begin{remark}
The extension of the dual pairing given in \cite[Proposition 3.6]{qclifford} is certainly not unique. We have at least the choice of a scalar in each degree. As a matter of fact, it is necessary to make such a choice of scalars to recover the relations of the classical Clifford algebra, as discussed in \cite{mat-proj}. For this reason, even though we will simply denote such an algebra by $\widetilde{\mathrm{Cl}}_q$, the choice of such scalars is understood.
\end{remark}

\begin{proposition}
The PBW-deformation $\mathrm{Cl}_q(c)$ is isomorphic to the quantum Clifford algebra $\widetilde{\mathrm{Cl}}_q$, with an appropriate choice of scalars for the latter.
\end{proposition}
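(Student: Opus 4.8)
The plan is to exhibit an explicit isomorphism between the two algebras by comparing their presentations. By \cref{prop:cliff-presentation} we know that $\mathrm{Cl}_q(c)$ is the quotient of $T(H)$, with $H = V \oplus V^*$, by the relations $x \otimes y + \sigma(x \otimes y) - (x, y)_c$. On the other hand $\widetilde{\mathrm{Cl}}_q = \mathrm{End}_\mathbb{C}(\Lambda_q(\mathfrak{u}_+))$ comes with the two commuting actions $\gamma_+$ of $\mathfrak{u}_+$ (left multiplication) and $\gamma_-$ of $\mathfrak{u}_-$ (dual of right multiplication), and by \cite[Theorem 5.1]{qclifford} the multiplication map $\Lambda_q(\mathfrak{u}_-) \otimes \Lambda_q(\mathfrak{u}_+) \to \mathrm{End}_\mathbb{C}(\Lambda_q(\mathfrak{u}_+))$ is an equivariant isomorphism. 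For $\mathfrak{g} = \mathfrak{sl}_N$ with the cominuscule parabolic corresponding to $\omega_1$, we have $\mathfrak{u}_+ \cong V(\omega_1) = V$ as a $U_q(\mathfrak{l})$-module, and hence $\mathfrak{u}_- \cong V^*$ (it carries the dual pairing). So both algebras are built on the same underlying $H$, and both have $\Lambda_q(V)$ and $\Lambda_q(V^*)$ as subalgebras sitting inside, with the same Hilbert series as $\Lambda(H)$.

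**Key steps.** First I would identify $\mathfrak{u}_+$ with $V$ and $\mathfrak{u}_-$ with $V^*$ as $U_q(\mathfrak{l})$-modules, so that $\gamma_+$ restricted to $\mathfrak{u}_+$ becomes the embedding $V \hookrightarrow \widetilde{\mathrm{Cl}}_q$ and $\gamma_-$ restricted to $\mathfrak{u}_-$ the embedding $V^* \hookrightarrow \widetilde{\mathrm{Cl}}_q$. This gives a linear map $H \to \widetilde{\mathrm{Cl}}_q$, which extends to an algebra map $\varphi : T(H) \to \widetilde{\mathrm{Cl}}_q$. Second, I would check that $\varphi$ kills the defining relations of $\mathrm{Cl}_q(c)$: since $\gamma_+$ factors through $\Lambda_q(\mathfrak{u}_+)$ it kills $S_q^2 V = \langle \mathcal{V}_{ij}\rangle$, and dually $\gamma_-$ kills the relations $\mathcal{W}_{ij}$; the cross relations $\mathcal{M}_{ij} - \delta_{ij} c = w_i \otimes v_j + \sigma(w_i\otimes v_j) - (w_i, v_j)_c$ become, after applying $\gamma_-$ and $\gamma_+$, precisely the "anticommutator" relations between the two actions. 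Computing $\gamma_-(w_i)\gamma_+(v_j) + (\text{braided flip})\gamma_+(v_j)\gamma_-(w_i)$ in $\mathrm{End}_\mathbb{C}(\Lambda_q(\mathfrak{u}_+))$ should reduce to a scalar multiple of $\langle w_i, v_j\rangle = \delta_{ij}$ — this is essentially the statement that $\gamma_-$ is the dual of right multiplication paired via $\langle \cdot, \cdot\rangle_k$ on $\Lambda_q(\mathfrak{u}_\pm)$, which is exactly the content of the Leibniz-type formula in \cite[Section 4--5]{qclifford}. Matching the scalar produced this way against $c$ is where the "appropriate choice of scalars" for $\widetilde{\mathrm{Cl}}_q$ (the degree-wise scalars in the extension of the dual pairing) gets pinned down. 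Third, having shown $\varphi$ descends to $\bar\varphi : \mathrm{Cl}_q(c) \to \widetilde{\mathrm{Cl}}_q$, I would check surjectivity (the images of $\gamma_\pm$ generate $\widetilde{\mathrm{Cl}}_q$ by \cite[Theorem 5.1]{qclifford}) and then injectivity by a dimension count: both algebras are finite-dimensional filtered deformations with associated graded equal to $\Lambda(H)$, namely $\mathrm{gr}\,\mathrm{Cl}_q(c) \cong \twistedext{\rescal}$ with the classical Hilbert series, and $\dim \mathrm{End}_\mathbb{C}(\Lambda_q(\mathfrak{u}_+)) = (\dim \Lambda(V))^2 = 2^N = \dim \Lambda(H)$.

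**Main obstacle.** The routine part is the formal presentation bookkeeping; the delicate part is the explicit computation of the mixed relation, i.e. verifying that the braided anticommutator $\gamma_-(w_i)\gamma_+(v_j) + \sigma\text{-flip}$ equals $c\,\delta_{ij}$ in $\mathrm{End}_\mathbb{C}(\Lambda_q(\mathfrak{u}_+))$, with the correct braiding $\braidR_{V^*, V}$ and the correct power of $q$ (the $q^{-1+1/N}$ appearing in $(\cdot,\cdot)_c$ on $V\otimes V^*$). This requires unwinding the definition of $\gamma_-$ as the dual of right multiplication through the pairing $\langle\cdot,\cdot\rangle_k$, and the fact that this pairing itself is built using the braiding; one must track how the rescaling by $q^{1/N}$ of the $R$-matrix interacts with the scalars chosen in the degree-wise extension of $\langle\cdot,\cdot\rangle$. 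I expect that, just as in the projective-space computation of \cite{mat-proj}, the scalars in $\widetilde{\mathrm{Cl}}_q$ can be chosen (uniquely up to the overall $c$) so that everything matches, and the $\Uqsl$-equivariance of the isomorphism is then automatic since every map in sight ($\gamma_\pm$, the pairing, the braiding) is equivariant.
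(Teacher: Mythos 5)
Your proposal follows essentially the same route as the paper: realize the generators of $\widetilde{\mathrm{Cl}}_q$ as $\gamma_\pm$ restricted to $\mathfrak{u}_\pm \cong V \oplus V^*$, note that the $\mathcal{V}_{ij}$ and $\mathcal{W}_{ij}$ relations hold because $\gamma_\pm$ factor through the braided exterior algebras, match the cross-relations against $\mathcal{M}_{ij} - \delta_{ij}c$, and finish by generation (via \cite[Theorem 5.1]{qclifford}) plus a dimension count. The computation you single out as the main obstacle is precisely what the paper disposes of by citing \cite[Propositions 6.5 and 6.6]{mat-proj}, where the relation $\mathfrak{i}_i \mathfrak{e}_j + q^{1-\delta_{ij}}\mathfrak{e}_j\mathfrak{i}_i - \delta_{ij}q(q-q^{-1})\sum_{k=1}^{i-1}\mathfrak{e}_k\mathfrak{i}_k = \delta_{ij}\id$ is established; comparison with \eqref{eq:Mij-expression} at $\rescal' = q$ then yields $\mathrm{Cl}_q(1)$, and rescaling the degree-wise pairing yields general $c$, exactly as you anticipate.
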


\begin{proof}
Let $\{ e_i \}$ and $\{ f_i \}$ be dual bases of $\mathfrak{u}_+$ and $\mathfrak{u}_-$ with respect to the dual pairing $\langle \cdot, \cdot \rangle$.
Moreover we assume that $\{ e_i \}$ is an orthonormal basis with respect to an invariant Hermitian inner product.
We write $\mathfrak{e}_i = \gamma_+(e_i)$ and $\mathfrak{i}_i = \gamma_-(f_i)$ as in \cite{mat-proj}.
By definition these operators satisfy the relations of $\Lambda_q(\mathfrak{u}_+)$ and $\Lambda_q(\mathfrak{u}_-)$, respectively.
Their cross-relations are computed in \cite[Proposition 6.5]{mat-proj} and \cite[Proposition 6.6]{mat-proj}, and are given by
\[
\mathfrak{i}_i \mathfrak{e}_j + q^{1 - \delta_{i j}} \mathfrak{e}_j \mathfrak{i}_i - \delta_{i j} q (q - q^{-1}) \sum_{k = 1}^{i - 1} \mathfrak{e}_k \mathfrak{i}_k = \delta_{i j} \id.
\]
We remark that the bases $\{ v_i \}$ and $\{ w_i \}$ used in this paper do not coincide exactly with $\{ e_i \}$ and $\{ f_i \}$, but they are related by a rescaling of the form $e_i = c_i v_i$ and $f_i = c_i^{-1} w_i$.
However it is immediate to check that these relations remain the same under such a rescaling.

The relations above should be compared to the expression for $\mathcal{M}_{i j}$ given in \eqref{eq:Mij-expression}, with $\lambda^\prime = q$. We see that we obtain exactly the relations of $\mathrm{Cl}_q(1)$, that is with parameter $c = 1$.
The relations for $c \neq 1$ can also be obtained in this way, namely by rescaling the dual pairing $\langle \cdot, \cdot \rangle_k : \Lambda_q^k(\mathfrak{u}_-) \otimes \Lambda_q^k(\mathfrak{u}_+) \to \mathbb{C}$, as explained in \cite{mat-proj}.
\end{proof}

\vspace{3mm}

{\footnotesize
\emph{Acknowledgements}.
I would like to thank Cristian Vay for useful discussions.
}

\bigskip

\end{document}